\definecolor{gray}{gray}{0.5}
\numberwithin{equation}{section} 
\newtheorem{theorem}{Theorem}[section]
\newtheorem{lemma}[theorem]{Lemma} 
\newtheorem{corollary}[theorem]{Corollary}
\newtheorem{proposition}[theorem]{Proposition} 
\newtheorem{remark}[theorem]{Remark}
\newtheorem{definition}[theorem]{Definition}
\newtheorem*{theoremA*}{Theorem A}
\newtheorem*{theoremB*}{Theorem B}
\def\C{\mathbb C}
\def\R{\mathbb R}
\def\Q{\mathbb Q}
\def\Z{\mathbb Z}
\def\P{\mathbb P}
\DeclareMathOperator{\Sym}{Sym}
\DeclareMathOperator{\Hom}{Hom}
\newcommand{\pr}{p}
\newcommand{\LB}{L}
\newcommand{\ctc}[2]{\sigma_{#1,#2}}
\newcommand{\e}[1]{e_{#1}}
\newcommand{\f}[1]{f_{#1}}
\newcommand{\rep}[1]{\dot{#1}}
\newcommand{\cycle}[1]{\llbracket #1\rrbracket}
\newcommand{\PD}[1]{[#1]}
\newcommand{\RL}{\Lambda_{r}}
\newcommand{\WL}{\Lambda}
\newcommand{\CRL}{{\Lambda_{r}^{\vee}}}
\newcommand{\CWL}{\Lambda^{\vee}}
\newcommand{\rkg}{n}
\newcommand{\EL}{E}
\newcommand{\fan}{\Sigma}
\newcommand{\Diva}[1]{D_{-\alpha^{\vee}_{#1}}}
\newcommand{\Divw}[1]{D_{\cvarpi_{#1}}}
\newcommand{\qp}[1]{q_{\alpha_{#1}}}
\newcommand{\nil}{e}
\newcommand{\trs}{\mathsf{T}}
\newcommand{\cvarpi}{\varpi^{\vee}}
\begin{document}
%%%%%%%%%%%%%%%%%%%

\text{}\vspace{-0pt}
\title[Peterson varieties and toric orbifolds]{Peterson varieties and toric orbifolds\\ associated to Cartan matrices}

\author {Hiraku Abe}
\address{Faculty of Science, Department of Applied Mathematics, Okayama University of Science, 1-1 Ridai-cho, Kita-ku, Okayama, 700-0005, Japan}
\email{hirakuabe@globe.ocn.ne.jp}

\author {Haozhi Zeng}
\address{School of Mathematics and Statistics, Huazhong University of Science and Technology, Wuhan, 430074, P.R. China}
\email{zenghaozhi@icloud.com} 

\begin{abstract}
The Peterson variety is a remarkable variety introduced by Dale Peterson to describe the quantum cohomology rings of all the partial flag varieties. The rational cohomology ring of the Peterson variety is known to be isomorphic to that of a particular toric orbifold which naturally arises from the given root system. 
In this paper, we show that it is not an accidental algebraic coincidence; we construct an explicit morphism from the Peterson variety to the toric orbifold which induces a ring isomorphism between their rational cohomology rings.
\end{abstract}

\maketitle

%%%%%%%%%%%%%%%%%%%
%%%%%%%%%%%%%%%%%%%
\section{Introduction}\label{sec: intro}
%%%%%%%%%%%%%%%%%%%
%%%%%%%%%%%%%%%%%%%
Let $G$ be a simply connected, semisimple algebraic group over $\C$ and $B$ a Borel subgroup of $G$. To describe the quantum cohomology rings of the partial flag varieties $G/P$ for all parabolic subgroups $P\subseteq G$ containing $B$, Dale Peterson (\cite{Peterson}) introduced a remarkable variety $Y$ in the Langlands dual flag variety $G^{\vee}/B^{\vee}$. After his work, the geometry of the Peterson variety has been studied in several directions (e.g.\ \cite{Ba,In-ty16,Kostant,Peterson,Rietsch06}). By changing the roles of $G$ and $G^{\vee}$, we may define the Peterson variety $Y$ in the flag variety $G/B$ which we take as the convention of this paper.

As for the topology of the Peterson variety, Harada-Horiguchi-Masuda (\cite{ha-ho-ma}) studied the rational cohomology ring $H^*(Y;\Q)$ (cf.\ \cite{AZ,fu-ha-ma,ha-ty11}), and they gave an explicit presentation of $H^*(Y;\Q)$ in terms of ring generators and their relations. Recently, the cohomology ring $H^*(Y;\Q)$ is actively studied from the view point of Schubert calculus (e.g.\ \cite{AHKZ,GoGo,GoMiSi,GoSi,ho23}). 
The ring presentation given by Harada-Horiguchi-Masuda (\cite{ha-ho-ma}) implies that the cohomology ring $H^*(Y;\Q)$ is isomorphic to that of a particular toric orbifold. In fact, there is a toric orbifold $X(\fan)$ introduced by Blume (\cite{Blume05}) to study moduli stacks of pointed chains of $\P^1$, and Harada-Horiguchi-Masuda's presentation implies that
\begin{align}\label{eq: intro 10}
  H^*(Y;\Q) \cong H^*(X(\fan);\Q)
\end{align} 
as graded rings (cf.\ Horiguchi-Masuda-Shareshian-Song \cite[Equation~(1.2)]{ho-ma-sh-so}).
This means that the cohomology ring of the Peterson variety admits a structure of the cohomology ring of a toric orbifold.
We note that it is not difficult to deduce \eqref{eq: intro 10} since explicit presentations for the rings $H^*(Y;\Q)$ and $H^*(X(\fan);\Q)$ are already known from \cite{ha-ho-ma} and \cite[Section~12.4]{CLS}; however the geometric background of \eqref{eq: intro 10} seems still mysterious. 

In Peterson's description of the quantum cohomology rings of the flag varieties, each quantum parameter gives rise to a section of a line bundle over $Y$. Using these sections, 
we construct an explicit morphism 
\begin{align*}
 \Psi \colon Y \rightarrow X(\fan),
\end{align*} 
and we show that $\Psi$ induces a ring isomorphism as in \eqref{eq: intro 10}. 
This means that \eqref{eq: intro 10} is not an accidental algebraic coincidence since the morphism $\Psi$ provides a connection between the geometry of the Peterson variety $Y$ and the geometry of the toric orbifold $X(\fan)$. Indeed, we apply in \cite{AZnn} the construction of $\Psi$ to prove Rietsch's conjecture on the totally nonnegative part of the Peterson variety $Y$ in Lie type A. We emphasize that our aim is not to deduce \eqref{eq: intro 10} but to understand its geometric background.

\vspace{20pt}

%%%%%%%%%%%%%%%%%%%
%%%%%%%%%%%%%%%%%%%
\noindent \textbf{Acknowledgments}.
%%%%%%%%%%%%%%%%%%%
%%%%%%%%%%%%%%%%%%%
We are grateful to Konstanze Rietsch, Bin Zhang, Changzheng Li, Naoki Fujita, Mikiya Masuda, Takashi Sato, and Tatsuya Horiguchi for valuable discussions. This research is supported in part by Osaka Central Advanced Mathematical Institute (MEXT Joint Usage/Research Center on Mathematics and Theoretical Physics): Geometry and combinatorics of Hessenberg varieties. The first author is supported in part by JSPS Grant-in-Aid for Scientific Research(C): 23K03102. The second author is supported in part by NSFC: 11901218.

\vspace{20pt}

%%%%%%%%%%%%%%%%%%%
%%%%%%%%%%%%%%%%%%%
\section{Notations}\label{sec: set up}
%%%%%%%%%%%%%%%%%%%
%%%%%%%%%%%%%%%%%%%
Let $G$ be a simply connected, semisimple algebraic group over $\C$ of rank $\rkg$. Choose a Borel subgroup $B\subset G$ and a maximal torus $T\subset B$. Let $W=N(T)/T$ be the Weyl group, where $N(T)$ is the normalizer of $T$ in $G$. Denoting the center of $G$ by $Z$, we have $Z\subset T$.

We denote by $\mathfrak{g}$, $\mathfrak{b}$, and $\mathfrak{t}$ the Lie algebra of $G$, $B$, and $T$, respectively. We have a root space decomposition 
\begin{align*}
 \mathfrak{g}=\mathfrak{t}\oplus\bigoplus_{\alpha\in\Phi}\mathfrak{g}_{\alpha}, 
 \end{align*}
where $\Phi$ is the set of roots. Let $\Phi^+$ be the set of positive roots and $\{\alpha_1,\ldots,\alpha_{\rkg}\}\subset\Phi^+$ the set of simple roots. Denote by $I$ the Dynkin diagram of $\Phi$ which we identify with the indexing set $\{1,\ldots,\rkg\}$ for the simple roots $\alpha_1,\ldots,\alpha_{\rkg}$ unless otherwise specified. 

Let $\RL=\Hom(T/Z,\C^{\times})$ be the root lattice of $T$, and let $\WL=\Hom(T,\C^{\times})$ be the weight lattice of $T$. We have the canonical inclusion $\RL\hookrightarrow \WL$ which takes the pullback by the quotient map $T\rightarrow T/Z$. Regarding them as $\Z$-modules, we have
\begin{align*}
 \RL = \bigoplus_{i\in I} \Z \alpha_i\quad \text{and} \quad \WL = \bigoplus_{i\in I} \Z \varpi_i,
\end{align*}
where $\varpi_1, \ldots, \varpi_{\rkg}$ are the fundamental weights of $T$. 

Dually, let $\CRL=\Hom(\C^{\times},T)$ be the coroot lattice, and let $\CWL=\Hom(\C^{\times},T/Z)$ be the coweight lattice. We also have the canonical inclusion $\CRL\hookrightarrow \CWL$ which is given by the composition with the quotient map $T\rightarrow T/Z$. As $\Z$-modules, we have 
\begin{align*}
 \CRL = \bigoplus_{i\in I} \Z \alpha^{\vee}_i \quad \text{and} \quad \CWL =\bigoplus_{i\in I} \Z \cvarpi_i, 
\end{align*}
where $\alpha^{\vee}_1, \ldots, \alpha^{\vee}_{\rkg}$ are the simple coroots and $\cvarpi_1, \ldots, \cvarpi_{\rkg}$ are the fundamental coweights.
These bases satisfy
\begin{align*}
 \langle \alpha_i , \cvarpi_j \rangle = \delta_{ij}
 \quad \text{and} \quad
 \langle \varpi_i , \alpha^{\vee}_j  \rangle = \delta_{ij}
 \qquad (i,j\in I)
\end{align*}
under the dual parings $\RL\times \CWL \rightarrow \Z$ and $\WL\times \CRL \rightarrow \Z$. Here, we use the same symbol $\langle \ , \ \rangle$ for the both parings by abusing notation.

\vspace{20pt}

%%%%%%%%%%%%%%%%%%%
%%%%%%%%%%%%%%%%%%%
\section{Toric orbifolds associated to Cartan matrices}\label{sec: Cartan toric}
%%%%%%%%%%%%%%%%%%%
%%%%%%%%%%%%%%%%%%%
In this section, we construct the toric orbifold $X(\fan)$ which appeared in the introduction. We keep the notations from Section~\ref{sec: set up}.

%%%%%%%%%%%%%%%%%%%
\subsection{The definition by quotient}
%%%%%%%%%%%%%%%%%%%
We set $\C^{2I}$ as follows:
\begin{align*}
 \C^{2I} \coloneqq 
 \{ (x_1,\ldots,x_{\rkg};y_1,\ldots,y_{\rkg}) \in \C^{2\rkg} \mid x_i,y_i\in\C \ (i\in I) \},
\end{align*}
where $I=\{1,2,\ldots,\rkg\}$ is the Dynkin diagram of the root system $\Phi$. 
The maximal torus $T\subset G$ acts linearly on $\C^{2I}$ through the weights $(\varpi_1,\ldots,\varpi_{\rkg},\alpha_1,\ldots,\alpha_{\rkg})$. 
Namely, we set
\begin{align}\label{eq: def of T-action on C2r}
 t\cdot (x_1,\ldots,x_{\rkg};y_1,\ldots,y_{\rkg}) 
 \coloneqq (\varpi_1(t)x_1,\ldots,\varpi_{\rkg}(t)x_{\rkg}; \alpha_1(t)y_1,\ldots,\alpha_{\rkg}(t)y_{\rkg})
\end{align}
for $t\in T$ and $(x_1,\ldots,x_{\rkg};y_1,\ldots,y_{\rkg})\in \C^{2I}$. 
We define a subset $\EL\subset \C^{2I}$ by
\begin{align}\label{eq: def of exceptional locus}
 \EL \coloneqq \bigcup_{i\in I} \{ (x_1,\ldots,x_{\rkg}; y_1,\ldots,y_{\rkg})\in\C^{2I} \mid  x_i=y_i=0 \}.
\end{align}
Then we have
\begin{align}\label{eq: subtraction}
 \C^{2I}-\EL = \{ (x_1,\ldots,x_{\rkg}; y_1,\ldots,y_{\rkg})\in\C^{2I} \mid  (x_i,y_i)\ne(0,0) \ (i\in I) \}.
\end{align}
It is clear that the linear $T$-action on $\C^{2I}$ defined in \eqref{eq: def of T-action on C2r} preserves the complement $\C^{2I}-\EL$.
We now consider the quotient space 
\begin{align}\label{eq: def of X}
 (\C^{2I}-\EL)/T
\end{align}
by the $T$-action on $\C^{2I}-\EL$ given in \eqref{eq: def of T-action on C2r}.
It admits a natural action of the quotient torus $T/Z$, where $Z$ is the center of $G$. Namely, the torus $T/Z$ acts on $(\C^{2I}-\EL)/T$ by setting
\begin{align}\label{eq: toric action on Xsigma}
 [t]\cdot [x_1,\ldots,x_{\rkg};y_1,\ldots,y_{\rkg}]
 \coloneqq [x_1,\ldots,x_{\rkg}; \alpha_1(t)y_1,\ldots,\alpha_{\rkg}(t)y_{\rkg}]
\end{align}
for $[t]\in T/Z$ and $[x_1,\ldots,x_{\rkg};y_1,\ldots,y_{\rkg}]\in (\C^{2I}-\EL)/T$. This action is well-defined since we have $\alpha_i(t)=1$ $(i\in I)$ for $t\in Z$.

We devote the rest of this section to prove that
the quotient $(\C^{2I}-\EL)/T$ with this $T/Z$-action is a simplicial projective toric variety. 

\begin{remark}
{\rm
As we will see in the next subsection, it coincides with the toric orbifold appeared in the paper of Blume $($\cite{Blume05}$)$ to study moduli stacks of pointed chains of $\P^1$.
In that paper, it was called the toric orbifold associated to the Cartan matrix (see Remark~\ref{rem: Blume}).
}
\end{remark}

\vspace{10pt}

%%%%%%%%%%%%%%%%%%%
\subsection{A fan $\fan$ on $\mathfrak{t}_{\R}$}\label{sec: construction of fan}
%%%%%%%%%%%%%%%%%%%
To prove that the quotient $(\C^{2I}-\EL)/T$ (with the action of $T/Z$ described in \eqref{eq: toric action on Xsigma}) is a toric variety, we begin with constructing a fan on $\mathfrak{t}_{\R}\coloneqq \CWL\otimes_{\Z}\R$, where $\CWL=\Hom(\C^{\times},T/Z)=\bigoplus_{i\in I} \Z \cvarpi_i$ is the coweight lattice.
We regard $\mathfrak{t}_{\R}$ as a vector space over $\R$ whose lattice of integral vectors is the coweight lattice $\CWL$.

For disjoint subsets $J,K\subseteq I$ (i.e.\ $J\cap K=\emptyset$), let $\ctc{J}{K}\subset \mathfrak{t}_{\R}$ be the cone spanned by the simple coroots $-\alpha^{\vee}_j$ for $j\in J$ and the fundamental coweights $\cvarpi_k$ for $k\in K$ :
\begin{align}\label{eq: def of cone}
 \ctc{J}{K} \coloneqq \text{cone}( \{-\alpha^{\vee}_j \mid j\in J\}\cup\{\cvarpi_k \mid k\in K \} )\subset \mathfrak{t}_{\R},
\end{align}
where we take the convention $\ctc{\emptyset}{\emptyset}\coloneqq\{\bm{0}\}$. 

\begin{definition}\label{def: fan}
\textnormal{
Let $\fan$ be the set of the cones $\ctc{J}{K}$ for disjoint subsets $J,K\subseteq I$:
\begin{align}\label{eq: def of fan}
 \fan \coloneqq\{\ctc{J}{K} \mid J,K\subseteq I, \ J\cap K=\emptyset\}.
\end{align}
}
\end{definition}

\vspace{10pt}

In what follows, we prove that $\fan$ is a fan on $\mathfrak{t}_{\R}=\CWL\otimes_{\Z}\R$.

\begin{lemma}\label{lem: dim of cones}
For $\ctc{J}{K}\in \fan$, we have
$\dim_{\R} \ctc{J}{K}=|J|+|K|$.
\end{lemma}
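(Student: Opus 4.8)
The plan is to identify $\dim_{\R}\ctc{J}{K}$ with the dimension of the $\R$-linear span of the generators of the cone, and then to establish that these generators are linearly independent. Since $\ctc{J}{K}$ is spanned by the $|J|+|K|$ vectors in $\{-\alpha^{\vee}_j \mid j\in J\}\cup\{\cvarpi_k \mid k\in K\}$, we automatically have $\dim_{\R}\ctc{J}{K}\le |J|+|K|$, so the real content is the reverse inequality, namely the linear independence of these generators. As replacing $-\alpha^{\vee}_j$ by $\alpha^{\vee}_j$ does not affect independence, I would discard the signs and work with $\{\alpha^{\vee}_j \mid j\in J\}\cup\{\cvarpi_k \mid k\in K\}$.

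To prove independence, I would start from a hypothetical relation
\begin{align*}
 \sum_{j\in J} a_j\,\alpha^{\vee}_j = \sum_{k\in K} b_k\,\cvarpi_k
\end{align*}
with $a_j,b_k\in\R$, and pair both sides with the simple root $\alpha_i$. Writing $a_{ij}=\langle \alpha_i,\alpha^{\vee}_j\rangle$ for the Cartan integers and using $\langle \alpha_i,\cvarpi_k\rangle=\delta_{ik}$, this produces two families of equations. For $i\in K$ the pairing expresses $b_i=\sum_{j\in J}a_j a_{ij}$ in terms of the $a_j$. For $i\in J$, the disjointness $J\cap K=\emptyset$ forces $\langle \alpha_i,\cvarpi_k\rangle=0$ for every $k\in K$, so the right-hand side vanishes and we obtain $\sum_{j\in J}a_j a_{ij}=0$ for all $i\in J$.

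The decisive step is then to deduce $(a_j)_{j\in J}=0$ from this last system, which is exactly the assertion that the principal submatrix $(a_{ij})_{i,j\in J}$ of the Cartan matrix is nonsingular. Here I would invoke the standard fact that the Cartan matrix of a finite-type root system becomes positive definite after symmetrization by positive diagonal factors, and that every principal submatrix of a positive definite matrix is again positive definite, hence invertible. This gives $a_j=0$ for all $j\in J$, and substituting back into the equations for $i\in K$ yields $b_i=0$ for all $k\in K$. The generators are therefore linearly independent, so $\dim_{\R}\ctc{J}{K}=|J|+|K|$.

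The one point deserving care—more a key input than an obstacle—is the invocation of positive definiteness of the submatrix indexed by $J$; the remainder is a one-line pairing computation that crucially exploits $J\cap K=\emptyset$. As an alternative to the positive-definiteness argument, I note that $J$ determines a sub-Dynkin-diagram whose associated root subsystem is again of finite type with Cartan matrix precisely $(a_{ij})_{i,j\in J}$, and finite-type Cartan matrices have positive determinant; however, the positive-definiteness route is the most self-contained and is the one I would present.
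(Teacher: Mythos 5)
Your proof is correct and follows essentially the same route as the paper: pair a hypothetical linear relation among the generators with the simple roots $\alpha_i$ for $i\in J$, use $J\cap K=\emptyset$ to kill the coweight terms, and conclude from the nonsingularity of the principal Cartan submatrix $(\langle \alpha_i,\alpha^{\vee}_j\rangle)_{i,j\in J}$, after which the coefficients on the $\cvarpi_k$ vanish as well. The only difference is how that nonsingularity is justified — the paper regards $J$ as a full sub-Dynkin diagram and cites that its Cartan matrix is nonsingular (exactly your stated alternative), whereas your primary route goes through positive definiteness of the symmetrized Cartan matrix and its principal submatrices — but these are equivalent standard facts, so the arguments coincide in substance.
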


\begin{proof}
It suffices to show that the set of ray vectors
\begin{align}\label{eq: generators of the cone}
 \{-\alpha^{\vee}_j \mid j\in J\}\cup\{\cvarpi_k \mid \ k\in K \} 
\end{align}
consists of linearly independent vectors over $\R$.
For that purpose, assume that
\begin{align}\label{eq: linear relation 1}
 \sum_{j\in J} x_j (-\alpha^{\vee}_j) + \sum_{k\in K} x_k \cvarpi_k = \bm{0}
\end{align}
for some coefficients $x_j, x_k\in \R$ $(j\in J, \ k\in K)$.
By taking the paring with $-\alpha_i$ $(i\in J)$, we obtain that
\begin{align*}
 \sum_{j\in J} \langle \alpha_i,\alpha^{\vee}_j \rangle x_j
 - \sum_{k\in K} \langle \alpha_i,\cvarpi_k \rangle x_k
 = 0
 \qquad (i\in J).
\end{align*}
For the second summand in the left hand side, we have $\langle \alpha_i,\cvarpi_k \rangle=\delta_{ik}$. Hence, the condition $J\cap K=\emptyset$ implies that the last equality takes of the form
\begin{align*}
 \sum_{j\in J} \langle \alpha_i,\alpha^{\vee}_j \rangle x_j
 = 0
 \qquad (i\in J).
\end{align*}
By regarding $J(\subseteq I)$ as a full subgraph of the Dynkin diagram $I$, we may think of $J$ as a Dynkin diagram.
Then the coefficient matrix $(\langle \alpha_i,\alpha^{\vee}_j \rangle)_{i,j\in J}$ appearing in this equation is the Cartan matrix associated with the Dynkin diagram $J$.
Hence, it is a nonsingular matrix, and we obtain that $x_j=0$ for $j\in J$.
This and \eqref{eq: linear relation 1} now imply that we also have $x_k=0$ for $k\in K$ since $\cvarpi_k\ (k\in K)$ are linearly independent.
Therefore, the ray vectors in \eqref{eq: generators of the cone} consists of linearly independent vectors.
\end{proof}

\vspace{10pt}

\begin{corollary}\label{cor: scrpc}
Each $\ctc{J}{K}\in \fan$ is a strongly convex rational polyhedral cone in $\mathfrak{t}_{\R}$.
\end{corollary}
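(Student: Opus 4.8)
The plan is to verify directly the two defining properties of a strongly convex rational polyhedral cone for each $\ctc{J}{K}$, drawing the essential input from Lemma~\ref{lem: dim of cones}.

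First I would check that $\ctc{J}{K}$ is \emph{rational polyhedral}, i.e.\ generated by finitely many vectors of the lattice $\CWL$. By the defining formula \eqref{eq: def of cone} the cone is generated by the finite set \eqref{eq: generators of the cone}, so it suffices to observe that each ray vector lies in $\CWL$. The fundamental coweights $\cvarpi_k$ $(k\in K)$ form part of a $\Z$-basis of $\CWL=\bigoplus_{i\in I}\Z\cvarpi_i$, hence are integral; and each simple coroot $\alpha^{\vee}_j$ lies in the coroot lattice $\CRL$, which is contained in $\CWL$ via the canonical inclusion $\CRL\into\CWL$, so $-\alpha^{\vee}_j\in\CWL$ as well. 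Thus all generators are lattice vectors and the set is finite.

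Next I would establish \emph{strong convexity}, namely that $\ctc{J}{K}$ contains no nonzero linear subspace, equivalently $\ctc{J}{K}\cap(-\ctc{J}{K})=\{\bm{0}\}$. Here I would invoke Lemma~\ref{lem: dim of cones}, whose proof shows that the ray vectors in \eqref{eq: generators of the cone} are linearly independent over $\R$. A cone generated by linearly independent vectors $v_1,\ldots,v_m$ is automatically strongly convex: if both $v$ and $-v$ lie in the cone, writing $v=\sum_i c_i v_i$ and $-v=\sum_i d_i v_i$ with all $c_i,d_i\ge 0$ and adding gives $\sum_i(c_i+d_i)v_i=\bm{0}$, whence $c_i+d_i=0$ for every $i$ by linear independence, and nonnegativity forces $c_i=d_i=0$, so $v=\bm{0}$.

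I do not anticipate any genuine obstacle: once the linear independence of the generators is granted from Lemma~\ref{lem: dim of cones}, both properties fall out immediately, making this a direct corollary. The only point deserving a line of attention is the (routine) confirmation that the simple coroots, and hence their negatives, are genuine lattice vectors in $\CWL$, which is exactly the content of the inclusion $\CRL\into\CWL$ recorded in Section~\ref{sec: set up}.
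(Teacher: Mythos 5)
Your proposal is correct and follows essentially the same route as the paper's own proof: rationality because the generators $-\alpha^{\vee}_j$ and $\cvarpi_k$ are lattice vectors in $\CWL$, and strong convexity from the linear independence established in Lemma~\ref{lem: dim of cones}. You merely spell out two details the paper leaves implicit (that $\CRL\subseteq\CWL$ makes the coroots integral, and the standard argument that a cone on linearly independent generators contains no line), both of which are accurate.
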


\begin{proof}
Since $-\alpha^{\vee}_1,\ldots,-\alpha^{\vee}_{\rkg}$ and $\cvarpi_1,\ldots,\cvarpi_{\rkg}$ are elements of the lattice $\CWL\subset \mathfrak{t}_{\R}$, the cone $\ctc{J}{K}$ defined in \eqref{eq: def of cone} is a rational polyhedral cone.
By Lemma~\ref{lem: dim of cones}, it is generated by linearly independent vectors so that it is strongly convex.
\end{proof}

\vspace{10pt}

The next claim can be proved by a direct computation, but it needs a slightly technical argument. So we prove it in Appendix (Lemma A).

\begin{lemma}\label{lem: intersection of two cones}
For $\ctc{J}{K}, \ctc{P}{Q}\in \fan$, we have $\ctc{J}{K}\cap \ctc{P}{Q} = \ctc{J\cap P}{K\cap Q}$.
\end{lemma}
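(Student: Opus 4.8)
The plan is to prove the two inclusions separately. The inclusion $\ctc{J\cap P}{K\cap Q}\subseteq\ctc{J}{K}\cap\ctc{P}{Q}$ is immediate: since $J\cap K=\emptyset$ forces $(J\cap P)\cap(K\cap Q)=\emptyset$, the cone $\ctc{J\cap P}{K\cap Q}$ really is an element of $\fan$, and each of its ray generators $-\alpha^{\vee}_i$ $(i\in J\cap P)$ and $\cvarpi_i$ $(i\in K\cap Q)$ is among the generators of \emph{both} $\ctc{J}{K}$ and $\ctc{P}{Q}$. So the entire difficulty lies in the reverse inclusion $\ctc{J}{K}\cap\ctc{P}{Q}\subseteq\ctc{J\cap P}{K\cap Q}$, and this is the step I expect to be the main obstacle.

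For the hard inclusion I would take $v\in\ctc{J}{K}\cap\ctc{P}{Q}$ and fix nonnegative expansions
\[
 v=-\sum_{j\in J}a_j\alpha^{\vee}_j+\sum_{k\in K}b_k\cvarpi_k=-\sum_{p\in P}c_p\alpha^{\vee}_p+\sum_{q\in Q}d_q\cvarpi_q,\qquad a_j,b_k,c_p,d_q\ge 0,
\]
which are unique by the linear independence established in Lemma~\ref{lem: dim of cones}. It suffices to show $a_j=0$ for $j\in J\setminus P$ and $b_k=0$ for $k\in K\setminus Q$, since then the first expansion already exhibits $v$ as a nonnegative combination of the generators of $\ctc{J\cap P}{K\cap Q}$. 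The device I would introduce is the auxiliary vector $\eta\coloneqq\sum_{p\in P}c_p\alpha^{\vee}_p-\sum_{j\in J}a_j\alpha^{\vee}_j$. Equating the two expansions of $v$ shows at the same time that $\eta=\sum_{q\in Q}d_q\cvarpi_q-\sum_{k\in K}b_k\cvarpi_k$, so that $\eta$ carries one expansion $\eta=\sum_i g_i\alpha^{\vee}_i$ in simple coroots and a second expansion $\eta=\sum_i h_i\cvarpi_i$ in fundamental coweights, where $g_i$ is supported on $J\cup P$ and $h_i$ on $K\cup Q$, each with the evident signs.

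The crux is then a global positivity argument. Fixing a $W$-invariant inner product $(\cdot,\cdot)$ on $\mathfrak{t}_{\R}$, the duality $\langle\alpha_i,\cvarpi_m\rangle=\delta_{im}$ together with the fact that $\alpha^{\vee}_i$ corresponds to a positive multiple of $\alpha_i$ under the identification $\mathfrak{t}_{\R}\cong\mathfrak{t}^{*}_{\R}$ gives $(\alpha^{\vee}_i,\cvarpi_m)=\epsilon_i\,\delta_{im}$ for certain constants $\epsilon_i>0$. Pairing the two expansions of $\eta$ against each other then yields $(\eta,\eta)=\sum_i\epsilon_i\,g_i h_i$. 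A short case check, using only $J\cap K=\emptyset$ and $P\cap Q=\emptyset$, shows that $g_ih_i\le 0$ for every $i$: the product can be nonzero only at $i\in J\cap Q$ (where it is $-a_id_i$) or at $i\in K\cap P$ (where it is $-b_ic_i$), whereas at the ``agreement'' indices $i\in J\cap P$ and $i\in K\cap Q$ one of the two factors vanishes. Hence $(\eta,\eta)\le 0$, and positive-definiteness of the invariant form forces $\eta=0$; linear independence of the simple coroots and of the fundamental coweights then gives $g_i=h_i=0$ for all $i$. Reading $g_i=0$ on $J\setminus P$ yields $a_i=0$ there, and $h_i=0$ on $K\setminus Q$ yields $b_i=0$ there, completing the argument. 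I emphasize that the obstacle is precisely this reverse inclusion: a naive coordinatewise comparison of signs fails because of the cross terms between the coroot and coweight directions, and the whole point is that these cross terms, once assembled into $(\eta,\eta)$, are controlled globally by positive-definiteness rather than coordinate by coordinate.
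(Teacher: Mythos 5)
Your proof is correct, but it takes a genuinely different route from the paper's. The paper proves the hard inclusion by a staged coordinate analysis: pairing $v$ against $-\alpha_i$ for $i\in J$ produces a linear system whose coefficient matrix is the Cartan matrix $C_J$ of the subdiagram $J$, and inverting it --- together with the two sign facts that the off-diagonal entries $\langle\alpha_i,\alpha^{\vee}_p\rangle$ are $\le 0$ and that $C_J^{-1}$ has nonnegative entries (cited from Lusztig--Tits and Humphreys) --- forces the coroot coefficients to vanish on $J-P$; the symmetric argument handles $P-J$, a second pairing gives equality of the coefficients on $J\cap P$, and a final comparison of coweight coordinates kills the coefficients on $K-Q$. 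You replace all of this with one global positivity argument: the difference $\eta$ of the two coroot parts equals the difference of the two coweight parts, a $W$-invariant form satisfies $(\alpha^{\vee}_i,\cvarpi_m)=\epsilon_i\delta_{im}$ with $\epsilon_i>0$, and your case check is right --- since $J\cap K=\emptyset$ and $P\cap Q=\emptyset$, the products $g_ih_i$ can be nonzero only on $J\cap Q$ and on $K\cap P$, where they equal $-a_id_i$ and $-b_ic_i$ respectively --- so $(\eta,\eta)=\sum_i\epsilon_i g_ih_i\le 0$ and positive-definiteness forces $\eta=0$. What each approach buys: the paper's argument stays entirely within Cartan-matrix combinatorics, and the inverse-Cartan nonnegativity it invokes is not a throwaway citation, since the same fact is reused for the wall relations in Lemma~\ref{lem: wall positivity}; your argument avoids inverse Cartan matrices altogether, is self-contained modulo the standard fact that a $W$-invariant inner product identifies $\alpha^{\vee}_i$ with a positive multiple of $\alpha_i$, and obtains all the vanishings plus the equality of the coroot parts on $J\cap P$ in a single stroke --- it is the classical ``obtuse cone'' trick, and arguably the more conceptual of the two.
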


\vspace{10pt}

\begin{corollary}\label{cor: simplicial fan}
$\fan$ is a simplicial fan on $\mathfrak{t}_{\R}$.
\end{corollary}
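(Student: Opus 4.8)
The plan is to verify the three defining axioms of a fan for $\fan$ directly, and then observe that the ``simplicial'' property is automatic from the linear independence already established. Recall that $\fan$ is a fan if each of its elements is a strongly convex rational polyhedral cone, every face of a cone in $\fan$ again lies in $\fan$, and the intersection of any two cones in $\fan$ is a common face of both. Since $I$ is finite, $\fan$ is a finite collection, so no finiteness issue arises.

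The first axiom is exactly Corollary~\ref{cor: scrpc}, which asserts that each $\ctc{J}{K}\in\fan$ is a strongly convex rational polyhedral cone. For simpliciality, I would appeal to Lemma~\ref{lem: dim of cones}: the generating ray vectors in \eqref{eq: generators of the cone} are linearly independent and number $|J|+|K|=\dim_{\R}\ctc{J}{K}$, so each cone is spanned by a linearly independent set, which is precisely the condition for a cone to be simplicial. Thus, once the remaining two axioms are checked, both the ``fan'' and the ``simplicial'' conclusions follow at once.

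The heart of the argument is the description of faces. Because $\ctc{J}{K}$ is a simplicial cone, its faces are exactly the subcones generated by subsets of its linearly independent generators; as these generators are indexed by the disjoint union $J\sqcup K$, the faces of $\ctc{J}{K}$ are precisely the cones $\ctc{J'}{K'}$ with $J'\subseteq J$ and $K'\subseteq K$. Each such pair again consists of disjoint subsets of $I$, so $\ctc{J'}{K'}\in\fan$, which proves that $\fan$ is closed under taking faces. For the intersection axiom, I would combine this face description with Lemma~\ref{lem: intersection of two cones}: for $\ctc{J}{K},\ctc{P}{Q}\in\fan$ the lemma gives $\ctc{J}{K}\cap\ctc{P}{Q}=\ctc{J\cap P}{K\cap Q}$, and since $J\cap P\subseteq J$, $K\cap Q\subseteq K$ and likewise $J\cap P\subseteq P$, $K\cap Q\subseteq Q$, this intersection is simultaneously a face of $\ctc{J}{K}$ and of $\ctc{P}{Q}$ by the preceding description. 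Hence it is a common face, and the intersection axiom holds.

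The only point that is not completely formal is the claim that the faces of a simplicial cone are exactly the subcones spanned by subsets of its generators. I do not expect this to be an obstacle: it is a standard fact in the theory of polyhedral cones (faces are cut out by supporting hyperplanes, which for a cone generated by a linearly independent set correspond to the coordinate subspaces in that basis), and it can simply be cited from \cite{CLS}. In this sense all the genuine work has already been carried out in Lemmas~\ref{lem: dim of cones} and~\ref{lem: intersection of two cones}, and the present statement is their formal consequence.
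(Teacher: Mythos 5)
Your proposal is correct and follows essentially the same route as the paper's proof: strong convexity and rationality from Corollary~\ref{cor: scrpc}, closure under faces via the observation that faces of $\ctc{J}{K}$ are the cones $\ctc{J'}{K'}$ with $J'\subseteq J$, $K'\subseteq K$, the intersection axiom from Lemma~\ref{lem: intersection of two cones}, and simpliciality from the linear independence in Lemma~\ref{lem: dim of cones}. Your only addition is to justify the face description explicitly via the standard fact about faces of simplicial cones, which the paper leaves as an inspection of \eqref{eq: def of cone}.
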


\begin{proof}
By Corollary~\ref{cor: scrpc}, $\fan$ is a finite set of strongly convex rational polyhedral cones in $\mathfrak{t}_{\R}$.
As one can see from \eqref{eq: def of cone}, a face of a cone $\ctc{J}{K}\in\fan$ is given by $\ctc{J'}{K'}$ for some $J'\subseteq J$ and $K'\subseteq K$, and hence it belongs to $\fan$ as well.
Also, it follows from Lemma~\ref{lem: intersection of two cones}
that the intersection of two cones in $\fan$ is a face of each.
Thus, $\fan$ is a fan on $\mathfrak{t}_{\R}$.
Moreover, each cone $\ctc{J}{K}$ in $\fan$ is simplicial 
since it is generated by linearly independent vectors as we saw in the proof of Lemma~\ref{lem: dim of cones}. Hence, the claim follows.
\end{proof}

\vspace{10pt}

\begin{definition}
\textnormal{
We denote by $X(\fan)$ the simplicial toric variety 
associated with the fan $\fan$ (on $\mathfrak{t}_{\R}$ with the coweight lattice $\CWL$) defined in \eqref{eq: def of fan}.
}
\end{definition}

\begin{remark}\label{rem: Blume}
{\rm 
In \cite{Blume05}, Blume studied toric stacks whose course moduli spaces are the toric varieties $X(\fan)$ of Lie types A, B, and C.
In that paper, he called them as \textit{toric orbifolds associated to Cartan matrices} since the collection of the ray vectors $-\alpha^{\vee}_1,\ldots,-\alpha^{\vee}_{\rkg}$ is represented by the Cartan matrix of $I$ times $-1$ by taking $\varpi^{\vee}_1, \ldots,\varpi^{\vee}_{\rkg}$ as the basis of the lattice $\CWL$.
}
\end{remark}

\vspace{5pt}

%%%%%%%%%%%%%%%%%%%
\subsection{The relation between $(\C^{2I}-\EL)/T$ and $X(\fan)$}\label{subsec: quotient construction}
%%%%%%%%%%%%%%%%%%%
In this subsection, we show that the quotient $(\C^{2I}-\EL)/T$ defined in \eqref{eq: def of X} admits a structure of an algebraic variety which is isomorphic to the toric variety $X(\fan)$.

Let us describe $X(\fan)$ by Cox's quotient construction \cite[Sect.\ 5.1]{CLS}.
Let $\fan(1)$ be the set of rays (i.e.\ 1 dimensional cones) of $\fan$.
Namely,
\begin{align*}
 \fan(1) = \{\text{cone}(-\alpha^{\vee}_i) \mid i\in I\} \cup\{\text{cone}(\cvarpi_i) \mid i\in I\}.
\end{align*}
A collection of rays in $\fan(1)$ is called a primitive collection 
when it is a minimal collection which does not span a cone in $\fan$.
Recalling the definition \eqref{eq: def of cone} of cones $\ctc{J}{K}$ in $\fan$, it is clear that each primitive collection is given by $\{\text{cone}(-\alpha^{\vee}_i), \text{cone}(\cvarpi_i)\}$ for some $i\in I$.
Therefore, the exceptional set $\EL$ in the affine space $\C^{\fan(1)}=\C^{2I}$ determined by these primitive collections coincides with the one defined in \eqref{eq: def of exceptional locus}:
\begin{align*}
 \EL = \bigcup_{i\in I} \{ (x_1,\ldots,x_{\rkg}; y_1,\ldots,y_{\rkg})\in\C^{2I} \mid  x_i=y_i=0 \}
\end{align*}
\vspace{-5pt}\\
(see \cite[Proposition~5.1.6]{CLS}). 
Here, we regard the first $\rkg$ components of $\C^{2I}$ to correspond to the rays $\text{cone}(-\alpha^{\vee}_i)$ in order, and we regard the last $\rkg$ components of $\C^{2I}$ to correspond to the rays $\text{cone}(\cvarpi_i)$ in order.
According to Cox's quotient construction of $X(\fan)$, we consider the complement $\C^{2I}-\EL$, and form a quotient by a certain torus action which we explain below.

Let $\Z^{\fan(1)}=\Z^{2I}$ be the $\Z$-module whose coordinates corresponds to the set of rays in the fan $\fan$ constructed in the previous subsection.
Recall that the lattice of the fan $\fan$ is the coweight lattice $\CWL\subset\mathfrak{t}_{\R}$, and its dual lattice is the root lattice $\RL$.
By definition, $-\alpha^{\vee}_i, \cvarpi_i\in \CWL$ for $i\in I$ are integral ray vectors in $\fan$. 
These ray vectors determine a homomorphism  \vspace{-5pt}
\begin{align}\label{eq: first beta}
 \RL \rightarrow \Z^{2I}
 \quad ; \quad
 \alpha_i \mapsto 
 \sum_{j=1}^{\rkg} \langle\alpha_i,-\alpha^{\vee}_j\rangle \bm{e}_j + \sum_{j=1}^{\rkg} \langle\alpha_i,\cvarpi_j\rangle \bm{e}_{\rkg+j} 
 \quad (i\in I).
\end{align}
\vspace{-5pt}\\
Since we have $\langle\alpha_i,\cvarpi_j\rangle=\delta_{ij}$,
it is clear that this map is injective.
The equalities $\alpha_i = \sum_{j \in I} c_{i,j} \varpi_j$ with $c_{i,j}=\langle\alpha_i,\alpha^{\vee}_j\rangle$ for $i\in I$ imply that the map \eqref{eq: first beta} fits with an exact sequence of character groups of tori
\begin{align}\label{eq: exact sequence of modules}
 0\rightarrow \RL \rightarrow \Z^{2I} \rightarrow \WL \rightarrow 0,
\end{align}
where the third homomorphism sends $\bm{e}_i$ to $\varpi_i$ $(i\in I)$ and $\bm{e}_{\rkg+i}$ to $\alpha_i$ $(i\in I)$.
In fact, one can see the exactness at $\Z^{2I}$ as follows.
The kernel of the third homomorphism consists of the elements 
$x=\sum_{i\in I} a_i \bm{e}_i + \sum_{i\in I} b_i \bm{e}_{\rkg+i} \in \Z^{2I}$
satisfying  $\sum_{i\in I} a_i \varpi_i + \sum_{i\in I} b_i \alpha_i =0$.
Since $\alpha_i=\sum_{j\in I}c_{i,j}\varpi_j$ for $i\in I$, the latter condition is equivalent to $a_i=-\sum_{j\in I} b_jc_{j,i}$ for $j\in I$ which implies that $x$ is the image of $\sum_{i\in I}b_i\alpha_i$ under the map \eqref{eq: first beta}.

Since $\RL$ is the character group of $T/Z$ and $\WL$ is the character group of $T$, the exact sequence \eqref{eq: exact sequence of modules} of the character groups is induced from an exact sequence of tori
\begin{align}\label{eq: exact sequence of tori}
 1\rightarrow T \rightarrow (\C^{\times})^{2I} \rightarrow T/Z \rightarrow 1,
\end{align}
where the second map sends $t\in T$ to $(\varpi_1(t),\ldots,\varpi_{\rkg}(t),\alpha_1(t),\ldots,\alpha_{\rkg}(t))\in (\C^{\times})^{2I}$ since it induces the third homomorphism in \eqref{eq: exact sequence of modules}.
Observe that $(\C^{\times})^{2I}$ acts on $\C^{2I}-\EL$ by the coordinate-wise multiplication which makes $\C^{2I}-\EL$ a toric variety.
Hence, the torus $T$ acts on the complement $\C^{2I}-\EL$ through the injection $T\rightarrow (\C^{\times})^{2I}$ in the exact sequence \eqref{eq: exact sequence of tori}.
By construction, this $T$-action coincides with the action defined in \eqref{eq: def of T-action on C2r}.

The third homomorphism $(\C^{\times})^{2I} \rightarrow T/Z$ in \eqref{eq: exact sequence of tori} extends to a morphism
\begin{align}\label{eq: geometric quotient 1}
 \C^{2I}-\EL\rightarrow X(\fan)
\end{align}
which can be explained as follows. 
Let $\widetilde{\fan}$ be the fan for the toric variety $\C^{2I}-\EL$; it is a fan in $\R^{2I}$ (with the standard lattice $\Z^{2I}$) consisting of all the faces of the cones $\widetilde{\sigma}_{J,K}$ for disjoint subsets $J,K\subseteq I$, where we set 
\begin{align*}
 \widetilde{\sigma}_{J,K} \coloneqq 
 \text{cone}( \{\bm{e}_j \mid j\in J\}\cup\{\bm{e}_{\rkg+k} \mid k\in K \} ) \subseteq \R^{2I}
\end{align*}
(cf.\ \eqref{eq: def of cone}).
The homomorphism $\Z^{2I}\rightarrow \CWL(=\Hom(\C^{\times},T/Z))$ defined by
\begin{align*}
 \bm{e}_i\mapsto -\alpha^{\vee}_i
 \quad \text{and} \quad
 \bm{e}_{\rkg+i}\mapsto \cvarpi_i
 \qquad (i\in I)
\end{align*}
induces a map $\widetilde{\fan}\rightarrow \fan$ between the fans, and the morphism \eqref{eq: geometric quotient 1} is the toric morphism corresponding to this map. In particular, \eqref{eq: geometric quotient 1} is equivariant with respect to the third homomorphism $(\C^{\times})^{2I} \rightarrow T/Z$ in \eqref{eq: exact sequence of tori}.

Recalling that the fan $\fan$ is simplicial (Corollary~\ref{cor: simplicial fan}), we now apply Cox's quotient construction \cite[Theorem~5.1.11]{CLS}.
Here, we note that his original construction requires the chosen ray vectors $-\alpha^{\vee}_i, \cvarpi_i\in \CWL$ $(i\in I)$ to be primitive elements in $\CWL$ whereas 
$-\alpha^{\vee}_i$ may not when the root system $\Phi$ contains that of type A$_1$ or type B$_k$ $(k\ge2)$. However, as claimed in \cite[the proof of Proposition 3.7]{Borisov-Chen-Smith}, the same construction works with possibly non-primitive ray vectors as well (see Remark~\ref{rem: bo-ch-sm} for details).
Namely, it states that the morphism $\C^{2I}-\EL \rightarrow X(\fan)$ in \eqref{eq: geometric quotient 1} is a geometric quotient for the $T$-action on $\C^{2I}-\EL$.
This implies that the morphism \eqref{eq: geometric quotient 1}
induces a bijection 
\begin{align}\label{eq: geometric quotient 2}
 (\C^{2I}-\EL)/T \rightarrow X(\fan)
\end{align}
so that we may regard the quotient $(\C^{2I}-\EL)/T$ as an algebraic variety which is isomorphic to $X(\fan)$ under the map \eqref{eq: geometric quotient 2}.
In particular, we have the following commutative diagram.
\[ \hspace{-20pt}
%WinTpicVersion4.32a
{\unitlength 0.1in%
\begin{picture}(14.0000,6.0000)(13.0000,-18.7000)%
% STR 2 0 3 0 Black White  
% 4 2000 1300 2000 1400 2 0 0 0
% $\C^{2I}-\EL$
\put(20.0000,-14.0000){\makebox(0,0)[lb]{$\C^{2I}-\EL$}}%
% STR 2 0 3 0 Black White  
% 4 1300 1900 1300 2000 2 0 0 0
% $(\C^{2I}-\EL)/T$
\put(13.0000,-20.0000){\makebox(0,0)[lb]{$(\C^{2I}-\EL)/T$}}%
% STR 2 0 3 0 Black White  
% 4 2600 1900 2600 2000 2 0 0 0
% $X(\fan)$
\put(26.0000,-20.0000){\makebox(0,0)[lb]{$X(\fan)$}}%
% VECTOR 2 0 3 0 Black White  
% 2 2200 1450 1900 1770
% 
\special{pn 8}%
\special{pa 2200 1450}%
\special{pa 1900 1770}%
\special{fp}%
\special{sh 1}%
\special{pa 1900 1770}%
\special{pa 1960 1735}%
\special{pa 1936 1731}%
\special{pa 1931 1708}%
\special{pa 1900 1770}%
\special{fp}%
% VECTOR 2 0 3 0 Black White  
% 2 2400 1450 2700 1770
% 
\special{pn 8}%
\special{pa 2400 1450}%
\special{pa 2700 1770}%
\special{fp}%
\special{sh 1}%
\special{pa 2700 1770}%
\special{pa 2669 1708}%
\special{pa 2664 1731}%
\special{pa 2640 1735}%
\special{pa 2700 1770}%
\special{fp}%
% STR 2 0 3 0 Black White  
% 4 2320 1870 2320 1970 2 0 0 0
% $\stackrel{\cong}{\rightarrow}$
\put(23.2000,-19.7000){\makebox(0,0)[lb]{$\stackrel{\cong}{\rightarrow}$}}%
% STR 2 0 3 0 Black White  
% 4 2650 1500 2650 1600 2 0 0 0
% {\tiny \eqref{eq: geometric quotient 1}}
\put(26.5000,-16.0000){\makebox(0,0)[lb]{{\tiny \eqref{eq: geometric quotient 1}}}}%
% STR 2 0 3 0 Black White  
% 4 1750 1500 1750 1600 2 0 0 0
% {\tiny quot.}
\put(17.5000,-16.0000){\makebox(0,0)[lb]{{\tiny quot.}}}%
\end{picture}}%
\vspace{7pt}\]
By the equivariance of \eqref{eq: geometric quotient 1} observed above, the isomorphism $(\C^{2I}-\EL)/T \stackrel{\cong}{\rightarrow} X(\fan)$ is equivariant with respect to the group isomorphism of tori $(\C^{\times})^{2I}/T \stackrel{\cong}{\rightarrow} T/Z$ induced by \eqref{eq: exact sequence of tori}.
Recalling that $\C^{2I}-\EL$ is given in \eqref{eq: subtraction}, we obtain that
\begin{align*}
 X(\fan) \cong \{[x_1,\ldots,x_{\rkg};y_1,\ldots,y_{\rkg}] \mid x_i, y_i\in\C, (x_i,y_i)\ne(0,0) \ (i\in I)\}.
\end{align*}

\vspace{5pt}

\begin{remark}\label{rem: bo-ch-sm}
{\rm
Let us briefly explain how \cite[the proof of Proposition 3.7]{Borisov-Chen-Smith} implies that Cox's quotient construction works with possibly non-primitive ray vectors (in our setting).
The collection of ray vectors $\beta\coloneqq\{-\alpha^{\vee}_i \mid i\in I\}\cup \{\cvarpi_i \mid i\in I\}\subseteq \CWL$ defines a surjective homomorphism
\begin{align*}
 \beta \colon \Z^{2I} \rightarrow \CWL
\end{align*}
which sends
$\bm{e}_i$ to $-\alpha^{\vee}_i$ and $\bm{e}_{\rkg+i}$ to $\cvarpi_i$ for $i\in I$.
The triple $(\CWL,\fan,\beta)$ is a stacky fan in the sense of \cite{Borisov-Chen-Smith}, where we note that the coweight lattice $\CWL$ is a free $\Z$-module.
We identify $\Z^{2I}$ and its dual module $\Hom_{\Z}(\Z^{2I},\Z)$ as usual; 
we identify each standard basis vector of $\Z^{2I}$ and the coordinate function for that component.
Then the dual homomorphism
\begin{align*}
 \beta^{\star} \colon \RL \rightarrow \Z^{2I}
\end{align*}
sends each $\alpha_i$ to a vector whose first $\rkg$ components are $(\langle\alpha_i,-\alpha^{\vee}_j\rangle)_{j\in I}$ and the last $\rkg$ components are $(\langle\alpha_i,\cvarpi_j\rangle)_{j\in I}$. Namely, $\beta^{\star}$ is precisely the homomorphism given in \eqref{eq: first beta}.
Therefore, the map $\beta^{\star}$ appeared in the exact sequence \eqref{eq: exact sequence of modules} as its second map: 
\begin{align*}
 0\rightarrow \RL \stackrel{\beta^{\star}}{\rightarrow} \Z^{2I} \rightarrow \WL \rightarrow 0.
\end{align*}
As we explained above, this exact sequence of character groups is induced from the exact sequence \eqref{eq: exact sequence of tori} of tori (equivalently, we obtain \eqref{eq: exact sequence of tori} by taking $\Hom_{\Z}( - ,\C^{\times})$ to this sequence).
We note that the exact sequence \eqref{eq: exact sequence of modules} in our setting is precisely the exact sequence (2.3) in \cite{Borisov-Chen-Smith} so that $\Hom_{\Z}( \WL ,\C^{\times})=T$ is the group to take the quotient in \cite[Sect.~3]{Borisov-Chen-Smith}.
Now, \cite[the proof of Proposition 3.7]{Borisov-Chen-Smith} claims that the morphism $\C^{2I}-\EL\rightarrow X(\fan)$ in \eqref{eq: geometric quotient 1} is the geometric quotient for the $T$-action on $\C^{2I}-\EL$ so that we obtain the conclusions mentioned above.}
\end{remark}

\vspace{10pt}

%%%%%%%%%%%%%%%%%%%
\subsection{The canonical torus action on $X(\fan)$}
%%%%%%%%%%%%%%%%%%%
As we saw in the previous subsection, there is an isomorphism $(\C^{2I}-\EL)/T\cong X(\fan)$ which is equivariant with respect to the group isomorphism $(\C^{\times})^{2I}/T \cong T/Z$ of tori in induced by \eqref{eq: exact sequence of tori}.
Thus, the canonical torus of the toric variety $(\C^{2I}-\EL)/T$ can be identified with $T/Z$.

In this subsection, let us describe the action of $T/Z$ on $(\C^{2I}-\EL)/T$ explicitly. For that purpose, we need to write down the above isomorphism $(\C^{\times})^{2I}/T \cong T/Z$ of tori.
Consider a homomorphism
\begin{align*}
 T\rightarrow (\C^{\times})^{2I}/T
 \quad ; \quad
 t\mapsto [1,\ldots,1;\alpha_1(t),\ldots,\alpha_{\rkg}(t)].
\end{align*}
Since we have $\alpha_i(t)=1$ for $t\in Z$ and $i\in I$, this induces a homomorphism 
\begin{align}\label{eq: TZ to C 5}
 T/Z\rightarrow (\C^{\times})^{2I}/T
 \quad ; \quad
 [t]\mapsto [1,\ldots,1;\alpha_1(t),\ldots,\alpha_{\rkg}(t)].
\end{align}

\begin{lemma}\label{lem: inverse homo}
The map \eqref{eq: TZ to C 5} is the inverse of the isomorphism $(\C^{\times})^{2I}/T \rightarrow T/Z$ induced from \eqref{eq: exact sequence of tori}.
\end{lemma}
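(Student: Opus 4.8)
The plan is to verify directly that the composite of the map~\eqref{eq: TZ to C 5} with the isomorphism $(\C^{\times})^{2I}/T \xrightarrow{\cong} T/Z$ induced by~\eqref{eq: exact sequence of tori} is the identity on $T/Z$; since both maps are homomorphisms between the same torus $T/Z$ and $(\C^{\times})^{2I}/T$, checking one composite is the identity (or tracking the induced maps on character lattices) suffices. First I would recall that the isomorphism $(\C^{\times})^{2I}/T \xrightarrow{\cong} T/Z$ is the descent of the third map in~\eqref{eq: exact sequence of tori}, namely $(z_1,\ldots,z_{\rkg};w_1,\ldots,w_{\rkg}) \mapsto$ the class in $T/Z$ cut out by the surjection $\Z^{2I}\onto \WL$ sending $\bm{e}_i\mapsto\varpi_i$ and $\bm{e}_{\rkg+i}\mapsto\alpha_i$. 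Because this is cleanest on character groups, I would instead argue on the level of characters of tori.

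Concretely, I would take $\Hom(-,\C^{\times})$ of~\eqref{eq: TZ to C 5} and compare with the character-group statement of~\eqref{eq: exact sequence of modules}. The map~\eqref{eq: TZ to C 5} is given on coordinates by $[t]\mapsto[1,\ldots,1;\alpha_1(t),\ldots,\alpha_{\rkg}(t)]$, so on characters it sends the class of $\bm{e}_{\rkg+i}$ in $\WL=\Z^{2I}/\RL$ to $\alpha_i$ and the class of $\bm{e}_i$ to $1$ (i.e.\ to the trivial character of $T/Z$, which is the zero element of $\RL$). Dually, the isomorphism $(\C^{\times})^{2I}/T\to T/Z$ corresponds on characters to the inclusion $\RL\into\WL$ read through the identification $\WL\cong\Z^{2I}/\RL$ coming from~\eqref{eq: exact sequence of modules}. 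The key computation is then to check that, under the surjection $\Z^{2I}\onto\WL$, the element $\alpha_i\in\RL$ lifts to (the class of) $\bm{e}_{\rkg+i}$; but this is exactly the assertion that the third map of~\eqref{eq: exact sequence of modules} sends $\bm{e}_{\rkg+i}\mapsto\alpha_i$, which is part of its very definition. Thus the two maps on character groups are mutually inverse, and hence so are the corresponding maps of tori.

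I expect the only subtlety to be bookkeeping: one must be careful that passing from~\eqref{eq: exact sequence of modules} to~\eqref{eq: exact sequence of tori} contravariantly reverses arrows, so the \emph{third} map $(\C^{\times})^{2I}\to T/Z$ of tori corresponds to the \emph{first} inclusion $\RL\into\Z^{2I}$ of characters, and the quotient torus $(\C^{\times})^{2I}/T$ has character group exactly $\RL$ (since $T$ has character group $\WL$ via the second map $\Z^{2I}\onto\WL$). Keeping these dualities straight is the main obstacle; once the identification of character lattices is set up correctly, the verification reduces to the tautology that $\bm{e}_{\rkg+i}\mapsto\alpha_i$ under $\Z^{2I}\onto\WL$, together with the fact that $\alpha_i$ generates $\RL$ over $i\in I$. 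Finally, I would note that since~\eqref{eq: TZ to C 5} is well-defined (as $\alpha_i(t)=1$ for $t\in Z$) and both tori have the same dimension $\rkg$, the mutual-inverse relation on a $\Z$-basis of characters upgrades to the claimed isomorphism of algebraic tori.
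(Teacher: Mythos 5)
Your overall strategy coincides with the paper's: since the map $(\C^{\times})^{2I}/T \rightarrow T/Z$ induced by \eqref{eq: exact sequence of tori} is already known to be an isomorphism, it suffices to show that its composite with \eqref{eq: TZ to C 5} is the identity on $T/Z$, and this is to be checked on character lattices. However, the execution has the torus/lattice duality backwards at the decisive point, and in a lemma that is pure bookkeeping this is not a pardonable slip \emph{--} it is the whole content. The character group of the \emph{quotient} torus $(\C^{\times})^{2I}/T$ is the \emph{sublattice} of $\Z^{2I}$ consisting of characters trivial on $T$, i.e.\ $\ker(\Z^{2I}\rightarrow\WL)=\beta^{\star}(\RL)\cong\RL$ (this is what your final paragraph correctly asserts); it is \emph{not} the quotient $\WL=\Z^{2I}/\RL$, which is the character group of the \emph{subtorus} $T$. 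Consequently the map your middle paragraph writes down as ``$\Hom(-,\C^{\times})$ of \eqref{eq: TZ to C 5}'', namely $\Z^{2I}/\RL\rightarrow\RL$ with $[\bm{e}_{\rkg+i}]\mapsto\alpha_i$ and $[\bm{e}_i]\mapsto 0$, does not exist: the assignment $\bm{e}_i\mapsto 0$, $\bm{e}_{\rkg+i}\mapsto\alpha_i$ on $\Z^{2I}$ does not annihilate $\beta^{\star}(\RL)$ (indeed it sends $\beta^{\star}(\alpha_i)\mapsto\alpha_i\neq 0$), so it does not descend to that quotient. Your middle and final paragraphs are therefore inconsistent with each other, and the step you single out as ``the key computation'' \emph{--} that $\bm{e}_{\rkg+i}$ lifts $\alpha_i$ under the third map $\Z^{2I}\onto\WL$ \emph{--} is a tautology about the wrong map, which cannot carry the proof.

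What actually must be computed is the composite $\RL\xrightarrow{\beta^{\star}}\Z^{2I}\rightarrow\RL$, where the second arrow is the character map of the lift \eqref{eq: TZ to C} of \eqref{eq: TZ to C 5} to $(\C^{\times})^{2I}$; that map sends $\bm{e}_i\mapsto 0$ and $\bm{e}_{\rkg+i}\mapsto\alpha_i$, and it is well defined precisely because its domain is the free module $\Z^{2I}$ rather than a quotient of it. Here the explicit formula \eqref{eq: first beta} for $\beta^{\star}$ enters essentially: the terms $\langle\alpha_i,-\alpha^{\vee}_j\rangle\bm{e}_j$ are killed exactly because the first $\rkg$ coordinates of \eqref{eq: TZ to C 5} equal $1$, and what survives is $\sum_{j}\langle\alpha_i,\cvarpi_j\rangle\alpha_j=\alpha_i$ by $\langle\alpha_i,\cvarpi_j\rangle=\delta_{ij}$. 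This identity, which your sketch never touches, is precisely the paper's proof: it shows \eqref{eq: TZ to C} is a splitting of $(\C^{\times})^{2I}\rightarrow T/Z$ and then descends through the quotient map $(\C^{\times})^{2I}\rightarrow(\C^{\times})^{2I}/T$, a maneuver that avoids ever identifying the character lattice of the quotient torus. To repair your argument, either adopt that lifting, or keep working with $(\C^{\times})^{2I}/T$ but use the correct identification of its character group as $\beta^{\star}(\RL)\subseteq\Z^{2I}$ and redo the computation accordingly.
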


\begin{proof}
Recall that the exact sequence \eqref{eq: exact sequence of tori} takes of the form
\begin{align}\label{eq: exact sequence of tori''}
 1\rightarrow T \rightarrow (\C^{\times})^{2I} \rightarrow T/Z \rightarrow 1
\end{align}
which was induced by the exact sequence of their character lattices
\begin{align}\label{eq: exact sequence of modules'}
 0\rightarrow \RL \rightarrow \Z^{2I} \rightarrow \WL \rightarrow 0,
\end{align}
where the second map is given by \eqref{eq: first beta}.
Consider a homomorphism
\begin{align}\label{eq: TZ to C}
 T/Z\rightarrow (\C^{\times})^{2I}
 \quad ; \quad
 [t]\mapsto (1,\ldots,1;\alpha_1(t),\ldots,\alpha_{\rkg}(t)).
\end{align}
We first show that this is a splitting of the homomorphism $(\C^{\times})^{2I} \rightarrow T/Z$ in \eqref{eq: exact sequence of tori''}.
By definition, the map \eqref{eq: TZ to C} is induced by the homomorphism of their character lattices
\begin{align*}
 \Z^{2I} \rightarrow \RL
\end{align*}
which sends $\bm{e}_i$ to $0$ and $\bm{e}_{\rkg+i}$ to $\alpha_i$ for $i\in I$.
By composing this with the second map in \eqref{eq: exact sequence of modules'}, we obtain
\begin{align*}
 \RL \rightarrow \Z^{2I} \rightarrow \RL
\end{align*}
which sends
\begin{align*}
 \alpha_i 
 \mapsto 
 \sum_{j=1}^{\rkg} \langle\alpha_i,-\alpha^{\vee}_j\rangle \bm{e}_j + \sum_{j=1}^{\rkg} \langle\alpha_i,\cvarpi_j\rangle \bm{e}_{\rkg+j}
 \mapsto 
 \sum_{j=1}^{\rkg} \langle\alpha_i,\cvarpi_j\rangle \alpha_j
 =\alpha_i
 \qquad (i\in I)
\end{align*}
since we have $\langle\alpha_i,\cvarpi_j\rangle=\delta_{ij}$. 
Namely, this is the identity map.
By construction, this implies that the map $T/Z\rightarrow T/Z$ given by the composition of \eqref{eq: TZ to C} and the homomorphism $(\C^{\times})^{2I} \rightarrow T/Z$ in \eqref{eq: exact sequence of tori''} must be the identity map.
In other words, \eqref{eq: TZ to C} is a splitting of the homomorphism $(\C^{\times})^{2I} \rightarrow T/Z$ in \eqref{eq: exact sequence of tori''}, as we claimed above.

Now observe that the homomorphism \eqref{eq: TZ to C 5} is the composition of \eqref{eq: TZ to C} and the quotient map $(\C^{\times})^{2I}\rightarrow (\C^{\times})^{2I}/T$.
Also, the homomorphism $(\C^{\times})^{2I} \rightarrow T/Z$ in \eqref{eq: exact sequence of tori''} factors though the quotient $(\C^{\times})^{2I}/T$ since the sequence \eqref{eq: exact sequence of tori''} is exact. Therefore, we obtain the following commutative diagram.
\begin{equation*}
\xymatrix{
T/Z \ar[rd]_{\eqref{eq: TZ to C 5}}\ar[r]^{\eqref{eq: TZ to C}} & (\C^{\times})^{2I} \ar[d]\ar[r]^{\eqref{eq: exact sequence of tori''}} & T/Z \\
& (\C^{\times})^{2I}/T \ar[ru]_{\cong} &
}
\end{equation*}
By the claim which we proved above, the composition of the top horizontal maps in this diagram is the identity map. Thus, the commutativity of this diagram implies that the composition $T/Z\stackrel{\eqref{eq: TZ to C 5}}{\rightarrow} (\C^{\times})^{2I}/T \stackrel{\cong}{\rightarrow} T/Z$ of the two slanting maps in this diagram is the identity map as well.
Since the second map in this composition is an isomorphism, it follows that  the map \eqref{eq: TZ to C 5} is its inverse.
\end{proof}

\vspace{10pt}

Lemma~\ref{lem: inverse homo} implies that, under the identification $X(\fan)=(\C^{2I}-\EL)/T$, the canonical torus action of $T/Z$ on $X(\fan)$ is given by
\begin{align}\label{eq: toric action on XSigma}
 [t]\cdot [x_1,\ldots,x_{\rkg};y_1,\ldots,y_{\rkg}] 
 =[x_1,\ldots,x_{\rkg};\alpha_1(t)y_1,\ldots,\alpha_{\rkg}(t)y_{\rkg}] 
\end{align}
for $[t]\in T/Z$ and $[x_1,\ldots,x_{\rkg};y_1,\ldots,y_{\rkg}]\in X(\fan)=(\C^{2I}-\EL)/T$ which is precisely the one given in \eqref{eq: toric action on Xsigma}. Namely, we obtain the following claim.

\begin{proposition}\label{prop: canonical torus action}
The canonical torus of $X(\fan)$ is $T/Z$ which acts on $X(\fan)=(\C^{2I}-\EL)/T$ through the weights $(1,\ldots,1,\alpha_1,\ldots,\alpha_{\rkg})$.
\end{proposition}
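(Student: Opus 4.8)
The plan is to identify the canonical torus of $X(\fan)$ through its description as the geometric quotient $(\C^{2I}-\EL)/T$, and then to transport the standard coordinate-wise torus action across the isomorphism $(\C^{\times})^{2I}/T\cong T/Z$ of tori induced by \eqref{eq: exact sequence of tori}. Since the conceptual content has already been established in the previous subsections, the argument amounts to assembling those results in the correct order and reading off the weights.

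First I would recall that the big torus of the ambient toric variety $\C^{2I}-\EL$ is $(\C^{\times})^{2I}$, acting by coordinate-wise multiplication. As the morphism \eqref{eq: geometric quotient 1} is a geometric quotient for the $T$-action on $\C^{2I}-\EL$, the canonical torus of $X(\fan)=(\C^{2I}-\EL)/T$ is the quotient $(\C^{\times})^{2I}/T$, which under \eqref{eq: exact sequence of tori} is identified with $T/Z$. It therefore remains to compute the induced $T/Z$-action on $(\C^{2I}-\EL)/T$ explicitly.

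For this computation I would invoke Lemma~\ref{lem: inverse homo}, which provides the inverse isomorphism $T/Z\to (\C^{\times})^{2I}/T$ explicitly by $[t]\mapsto [1,\ldots,1;\alpha_1(t),\ldots,\alpha_{\rkg}(t)]$. Applying the descended coordinate-wise action to a class $[x_1,\ldots,x_{\rkg};y_1,\ldots,y_{\rkg}]$, the first $\rkg$ coordinates are multiplied by $1$ while the last $\rkg$ coordinates are multiplied by $\alpha_i(t)$, which recovers precisely the action \eqref{eq: toric action on Xsigma}. Reading off the multipliers then shows that $T/Z$ acts through the weights $(1,\ldots,1,\alpha_1,\ldots,\alpha_{\rkg})$, as asserted.

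I do not expect a serious obstacle here, since Lemma~\ref{lem: inverse homo} already carries the computational weight and the remaining steps are purely formal. The only point requiring care is the bookkeeping: one must track which quotient torus is acting and confirm that the explicit formula \eqref{eq: toric action on Xsigma}, written directly in terms of $[t]\in T/Z$, is independent of the chosen representative $t$ of $[t]$. This holds because $\alpha_i(t)=1$ for $t\in Z$, exactly as noted immediately after \eqref{eq: toric action on Xsigma}.
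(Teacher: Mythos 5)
Your proposal is correct and follows essentially the same route as the paper: identify the canonical torus with $(\C^{\times})^{2I}/T\cong T/Z$ via the equivariance of the geometric quotient \eqref{eq: geometric quotient 1} and the exact sequence \eqref{eq: exact sequence of tori}, then invoke Lemma~\ref{lem: inverse homo} to transport the coordinate-wise action and read off the weights $(1,\ldots,1,\alpha_1,\ldots,\alpha_{\rkg})$. This is precisely how the paper deduces the proposition in the paragraph preceding its statement.
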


\vspace{10pt}

For each $i\in I$, 
we denote by $\Diva{i}$ and $\Divw{i}$ the $T/Z$-invariant irreducible Weil divisors on $X(\fan)$ corresponding to the rays generated by $-\alpha^{\vee}_i$ and $\cvarpi_i$, respectively. 
Under the identification $X(\fan)=(\C^{2I}-\EL)/T$, these divisors  are  given by
\begin{equation}
\begin{split}\label{eq: invariant divisor}
 &\Diva{i} = \{ [x_1,\ldots,x_{\rkg};y_1,\ldots,y_{\rkg}]\in X(\fan) \mid x_i=0 \}, \\
 &\Divw{i} = \{ [x_1,\ldots,x_{\rkg};y_1,\ldots,y_{\rkg}]\in X(\fan) \mid y_i=0 \}.
 \end{split}
\end{equation}
In the rest of this paper, we identify the toric variety $X(\fan)$ and $(\C^{2I}-\EL)/T$ with the $T/Z$-action described in Proposition~\ref{prop: canonical torus action}.

\vspace{20pt}

%%%%%%%%%%%%%%%%%%%
%%%%%%%%%%%%%%%%%%%
\section{Projectivity of $X(\fan)$}
%%%%%%%%%%%%%%%%%%%
%%%%%%%%%%%%%%%%%%%
In this section, we prove that the variety $X(\fan)$ is projective. We begin with its completeness.

%%%%%%%%%%%%%%%%%%%
\subsection{Completeness of $X(\fan)$}
%%%%%%%%%%%%%%%%%%%
For $J\subseteq I$, we set 
\begin{align}\label{eq: def of maximal cone}
 \sigma_J \coloneqq \ctc{J}{I-J} 
 = \text{cone}( \{-\alpha^{\vee}_j \mid j\in J\}\cup\{\cvarpi_k \mid \ k\in I-J \} ).
\end{align}
By Lemma~\ref{lem: dim of cones}, we have 
\begin{align}\label{eq: dim of maximal cone}
 \dim_{\R}\sigma_J = \rkg = \dim_{\R}\mathfrak{t}_{\R}
\end{align}
for $J\subseteq I$. Namely, each $\sigma_J$ is a full-dimensional cone in $\mathfrak{t}_{\R}(=\CWL\otimes_{\Z}\R)$.

\begin{lemma}\label{lem: complete 1}
The maximal cones in $\fan$ are precisely $\sigma_J$ for $J\subseteq I$.
\end{lemma}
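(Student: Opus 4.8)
The plan is to characterize containment among the cones $\ctc{J}{K}$ and then read off which ones are maximal. The first step is to establish the criterion that, for disjoint pairs $(J,K)$ and $(P,Q)$, one has $\ctc{J}{K}\subseteq\ctc{P}{Q}$ if and only if $J\subseteq P$ and $K\subseteq Q$. The ``if'' direction is immediate, since then the ray generators of $\ctc{J}{K}$ listed in \eqref{eq: def of cone} form a subset of those of $\ctc{P}{Q}$. For the ``only if'' direction I would use Lemma~\ref{lem: intersection of two cones}: if $\ctc{J}{K}\subseteq\ctc{P}{Q}$, then $\ctc{J}{K}=\ctc{J}{K}\cap\ctc{P}{Q}=\ctc{J\cap P}{K\cap Q}$, and comparing dimensions via Lemma~\ref{lem: dim of cones} gives $|J|+|K|=|J\cap P|+|K\cap Q|$. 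Since $|J\cap P|\le|J|$ and $|K\cap Q|\le|K|$, equality forces $J\subseteq P$ and $K\subseteq Q$.

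With this criterion in hand, I would observe that every cone of $\fan$ is contained in some $\sigma_J$. Indeed, for any $\ctc{J}{K}\in\fan$ the disjointness $J\cap K=\emptyset$ gives $K\subseteq I-J$, so the criterion yields $\ctc{J}{K}\subseteq\ctc{J}{I-J}=\sigma_J$. Moreover this containment is proper precisely when $K\ne I-J$: by the criterion, $\ctc{J}{K}=\sigma_J$ would force $I-J\subseteq K$, hence $K=I-J$.

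Combining these, I would conclude as follows. Each $\sigma_J$ is maximal: if $\sigma_J\subseteq\ctc{P}{Q}$, the criterion gives $J\subseteq P$ and $I-J\subseteq Q$ with $P\cap Q=\emptyset$, which forces $P=J$ and $Q=I-J$, so $\ctc{P}{Q}=\sigma_J$. (Equivalently, one may note $\sigma_J$ is full-dimensional by \eqref{eq: dim of maximal cone}, so no cone in the fan $\fan$ of Corollary~\ref{cor: simplicial fan} can properly contain it.) Conversely, any cone $\ctc{J}{K}$ with $K\ne I-J$ is properly contained in $\sigma_J$ by the previous paragraph, hence is not maximal. Therefore the maximal cones are exactly those $\ctc{J}{K}$ with $K=I-J$, which are precisely the $\sigma_J$ for $J\subseteq I$.

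I expect the only genuine point requiring care to be the converse half of the containment criterion, as it is what rules out ``unexpected'' inclusions among cones; but this is handled cleanly by the intersection formula of Lemma~\ref{lem: intersection of two cones} together with the dimension count, both already available. The remaining steps are purely combinatorial bookkeeping on the index sets.
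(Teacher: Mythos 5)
Your proof is correct and follows essentially the same route as the paper's: in both arguments the decisive step is to intersect two cones via Lemma~\ref{lem: intersection of two cones} and compare dimensions via Lemma~\ref{lem: dim of cones}. The only difference is packaging --- you promote that computation to a general containment criterion ($\ctc{J}{K}\subseteq\ctc{P}{Q}$ if and only if $J\subseteq P$ and $K\subseteq Q$), whereas the paper applies the same computation just to the pairs $\sigma_J,\sigma_{J'}$, obtaining $\dim_{\R}(\sigma_J\cap\sigma_{J'})=|J\cap J'|+|I-(J\cup J')|<\rkg$ for $J\ne J'$ and hence incomparability of distinct full-dimensional cones.
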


\begin{proof}
For a cone $\ctc{J}{K}$ in $\fan$, we have $K\subseteq I-J$ since $J\cap K=\emptyset$.
Hence, we have
\begin{align*}
 \ctc{J}{K} \subseteq \ctc{J}{I-J} = \sigma_J .
\end{align*}
This shows that every cone in $\fan$ is contained in $\sigma_J$ for some $J\subseteq I$.
Moreover, for $J,J'\subseteq I$, we have
\begin{align*}
 \sigma_J \cap \sigma_{J'} = \ctc{J}{I-J}\cap \ctc{J'}{I-J'} = \ctc{J\cap J'}{I-(J\cup J')} 
\end{align*}
by Lemma~\ref{lem: intersection of two cones}.
Hence, we obtain from Lemma~\ref{lem: dim of cones} that
\begin{align*}
 \dim_{\R}(\sigma_J \cap \sigma_{J'}) 
 = |J\cap J'|+|I-(J\cup J')| .
\end{align*}
Thus, if $J\ne J'$, then $\dim_{\R}(\sigma_J \cap \sigma_{J'})<\dim_{\R}\mathfrak{t}_{\R}=\rkg$ which implies that 
either of $\sigma_J$ and $\sigma_{J'}$ is not contained in the other.
\end{proof}

\vspace{10pt}

\begin{lemma}\label{lem: complete 2}
For a codimension $1$ cone $\ctc{J}{K}\in\fan$, there exist $2$ full-dimensional cones in $\fan$ which contains $\ctc{J}{K}$.
\end{lemma}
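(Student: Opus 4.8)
The plan is to locate the unique index of $I$ that the codimension-$1$ cone omits, and then obtain the two full-dimensional cones by adjoining that index either to the ``coroot part'' $J$ or to the ``coweight part'' $K$. The construction is immediate; the only step demanding any care is the verification that exactly two such cones arise, which is what the completeness argument will ultimately require.

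First I would run the dimension count. Since $\ctc{J}{K}$ has codimension $1$, Lemma~\ref{lem: dim of cones} gives $|J|+|K|=\rkg-1$, and together with $J\cap K=\emptyset$ this shows that there is a unique index $i\in I$ with $i\notin J\cup K$, so that $I=J\sqcup K\sqcup\{i\}$. I would then exhibit the two candidate maximal cones as the ones attached to the subsets $J$ and $J\cup\{i\}$ of $I$; recalling the definition~\eqref{eq: def of maximal cone}, these are
\begin{align*}
 \sigma_J &= \ctc{J}{I-J} = \ctc{J}{K\cup\{i\}}, \\
 \sigma_{J\cup\{i\}} &= \ctc{J\cup\{i\}}{I-(J\cup\{i\})} = \ctc{J\cup\{i\}}{K}.
\end{align*}
Both are full-dimensional by \eqref{eq: dim of maximal cone}, and each contains $\ctc{J}{K}$ as a face, since a face of $\ctc{P}{Q}$ is exactly $\ctc{P'}{Q'}$ with $P'\subseteq P$ and $Q'\subseteq Q$ (as observed in the proof of Corollary~\ref{cor: simplicial fan}): indeed $J\subseteq J$, $K\subseteq K\cup\{i\}$ in the first case and $J\subseteq J\cup\{i\}$, $K\subseteq K$ in the second. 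As $J\ne J\cup\{i\}$, these two cones are distinct, which already produces the required two full-dimensional cones containing $\ctc{J}{K}$.

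Finally, to confirm that these are the \emph{only} full-dimensional cones containing $\ctc{J}{K}$, I would take an arbitrary $\sigma_{J'}=\ctc{J'}{I-J'}$ with $\ctc{J}{K}\subseteq\sigma_{J'}$ and apply Lemma~\ref{lem: intersection of two cones} to the equality $\ctc{J}{K}=\ctc{J}{K}\cap\sigma_{J'}=\ctc{J\cap J'}{K\cap(I-J')}$. Comparing dimensions via Lemma~\ref{lem: dim of cones} gives $|J\cap J'|+|K\cap(I-J')|=|J|+|K|$, and since $J\cap J'\subseteq J$ and $K\cap(I-J')\subseteq K$ this forces $J\cap J'=J$ and $K\cap(I-J')=K$, i.e.\ $J\subseteq J'$ and $J'\cap K=\emptyset$. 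Hence $J\subseteq J'\subseteq I-K=J\cup\{i\}$, so $J'\in\{J,\,J\cup\{i\}\}$. The only point where one must be slightly careful is precisely this last reduction from an inclusion of cones to an equality of index data, and the clean way around it is to argue by dimension (using Lemma~\ref{lem: dim of cones}) rather than by matching individual ray generators, thereby sidestepping any worry about whether distinct index sets could yield the same cone.
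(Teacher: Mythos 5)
Your proof is correct and its core is identical to the paper's: both use Lemma~\ref{lem: dim of cones} to isolate the unique missing index $\ell\in I-(J\sqcup K)$ and then exhibit the two distinct maximal cones $\sigma_J=\ctc{J}{K\sqcup\{\ell\}}$ and $\sigma_{J\sqcup\{\ell\}}=\ctc{J\sqcup\{\ell\}}{K}$ containing $\ctc{J}{K}$. Your additional verification via Lemma~\ref{lem: intersection of two cones} that these are the \emph{only} such maximal cones is sound but goes beyond what the paper proves or needs (the completeness argument in Proposition~\ref{prop: complete} only requires at least two).
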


\begin{proof}
Since $\dim_{\R}\ctc{J}{K}=\rkg-1$ by the assumption, we have $\rkg-(|J|+|K|)=1$ by Lemma~\ref{lem: dim of cones}. So let us write $I-(J\cup K)=\{\ell\}$, and set 
\begin{align*}
 J'\coloneqq J\sqcup\{\ell\}, \quad K'\coloneqq K\sqcup\{\ell\}.
\end{align*}
Then we have $\ctc{J'}{K}=\sigma_{J'}$ and $\ctc{J}{K'}=\sigma_{J}$ by construction. 
Since $J\ne J'$, these are distinct full-dimensional cones containing $\ctc{J}{K}$.
\end{proof}

\vspace{10pt}
We now prove the following.

\begin{proposition}\label{prop: complete}
$X(\fan)$ is complete.
\end{proposition}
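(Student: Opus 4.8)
The plan is to reduce completeness of the variety to completeness of the fan and then verify the latter combinatorially. By \cite[Theorem~3.1.19]{CLS}, the toric variety $X(\fan)$ is complete if and only if the support $|\fan|=\bigcup_{\sigma\in\fan}\sigma$ equals the whole space $\mathfrak{t}_{\R}$. Since $X(\fan)$ has already been realized as a genuine variety in Section~\ref{subsec: quotient construction}, it suffices to prove the purely combinatorial statement $|\fan|=\mathfrak{t}_{\R}$, and this is exactly where Lemmas~\ref{lem: complete 1} and~\ref{lem: complete 2} are designed to be used.

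First I would note that $|\fan|$ is closed, being a finite union of closed cones, and that it has nonempty interior, since by Lemma~\ref{lem: complete 1} it contains the full-dimensional cones $\sigma_J$. The heart of the argument is to show that no codimension-one cone of $\fan$ contributes to the topological boundary $\partial|\fan|$. Fix a codimension-one cone $\ctc{J}{K}\in\fan$ and let $H\subset\mathfrak{t}_{\R}$ be the hyperplane it spans. By Lemma~\ref{lem: complete 2} there are two maximal cones, namely $\sigma_J$ and $\sigma_{J\sqcup\{\ell\}}$, having $\ctc{J}{K}$ as a facet. Because these are distinct maximal cones of a fan meeting in the common face $\ctc{J}{K}$, they must lie on opposite sides of $H$: near a point of the relative interior of $\ctc{J}{K}$ each cone locally fills one of the two closed half-spaces bounded by $H$ (if both filled the same half-space their interiors would overlap, contradicting $\sigma_J\cap\sigma_{J\sqcup\{\ell\}}=\ctc{J}{K}$). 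Hence their union is a full neighborhood of that point, so the relative interior of $\ctc{J}{K}$ lies in $\operatorname{int}|\fan|$. Combined with the interiors of the maximal cones, this shows $\partial|\fan|$ is contained in the union of the cones of $\fan$ of dimension at most $\rkg-2$.

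To finish I would use a connectedness argument. The set $S\coloneqq\bigcup_{\dim_{\R}\sigma\le \rkg-2}\sigma$ is a finite union of cones of dimension at most $\rkg-2$, hence a closed subset of $\mathfrak{t}_{\R}\cong\R^{\rkg}$ of codimension at least two, and removing such a set from $\R^{\rkg}$ leaves a connected space. If $|\fan|$ were a proper subset of $\mathfrak{t}_{\R}$, then, using $\partial|\fan|\subseteq S$, the complement $\mathfrak{t}_{\R}\setminus S$ would decompose as the disjoint union of the two nonempty open sets $\operatorname{int}|\fan|$ and $\mathfrak{t}_{\R}\setminus|\fan|$, contradicting its connectedness. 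Therefore $|\fan|=\mathfrak{t}_{\R}$, and $X(\fan)$ is complete.

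The step I expect to require the most care is the control of $\partial|\fan|$: one must guarantee that no codimension-one cone lies on the boundary, and the essential inputs for this are Lemma~\ref{lem: complete 2} (each such cone is a facet of two maximal cones) together with the elementary fan-theoretic fact that two maximal cones sharing a facet sit on opposite sides of the spanning hyperplane. Once this is established, pushing the boundary into codimension two and concluding by connectedness is routine; I would only take care to handle the low-rank situation separately, for instance $\rkg=1$, where $S=\emptyset$ and the two maximal cones $\sigma_{\emptyset}$ and $\sigma_{I}$ already cover $\mathfrak{t}_{\R}$.
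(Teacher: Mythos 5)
Your proof is correct, and its first half follows the same strategy as the paper's: both use Lemma~\ref{lem: complete 1} and Lemma~\ref{lem: complete 2} to show that the topological boundary of the support $|\fan|$ contains no codimension-one cone. In fact you are more explicit than the paper at the key point: the paper passes directly from ``each wall lies in two maximal cones'' to ``the boundary is empty,'' whereas you spell out why those two maximal cones must lie on opposite sides of the hyperplane spanned by the wall (their interiors cannot overlap, since by Lemma~\ref{lem: intersection of two cones} their intersection is exactly the wall), which is precisely the justification the paper leaves implicit. Where you genuinely diverge is the endgame. The paper concludes by a metric argument following the hint to \cite[Exercise~3.4.12]{CLS}: if some $p\notin|\fan|$, the cone of $\fan$ at minimal distance from $p$ cannot lie in the interior of $|\fan|$, so it meets $\partial|\fan|$, forcing the boundary to be nonempty, a contradiction with $\partial|\fan|=\emptyset$. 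You instead settle for the weaker bound $\partial|\fan|\subseteq S$, where $S$ is the union of the cones of dimension at most $\rkg-2$, and conclude from the topological fact that $\mathfrak{t}_{\R}\setminus S$ is connected, so it cannot split into two nonempty disjoint open pieces. Both endgames are valid; the paper's requires the stronger statement $\partial|\fan|=\emptyset$, while yours trades that for a connectedness fact that you assert but do not prove --- for a finite union of convex cones of codimension at least two it does hold, e.g.\ because two points of the complement can be joined by a two-segment path through a generic intermediate point, each segment missing $S$ for dimension reasons. One small correction: since $S$ can meet $\operatorname{int}|\fan|$ (the origin, which is the cone $\ctc{\emptyset}{\emptyset}$ of dimension $0$, lies in $S$ once $\rkg\ge2$), the decomposition should read $\mathfrak{t}_{\R}\setminus S=(\operatorname{int}|\fan|\setminus S)\sqcup(\mathfrak{t}_{\R}\setminus|\fan|)$ rather than involving $\operatorname{int}|\fan|$ itself; both pieces are still open and nonempty, so your argument is unaffected.
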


\begin{proof}
Let $|\fan|\subseteq\mathfrak{t}_{\R}$ be the support of the fan $\fan$.
By \eqref{eq: dim of maximal cone} and Lemma~\ref{lem: complete 1}, $|\fan|$ is a finite union of the full-dimensional cones in $\fan$.
Thus, the boundary $\partial |\fan|$ is a (possibly empty) union of codimension $1$ cones in $\fan$.
However, Lemma~\ref{lem: complete 2} implies that any codimension 1 cone in $\fan$ does not appear in this union. Therefore, the boundary $\partial |\fan|$ must be empty.
 
Now, suppose that the fan $\fan$ is not complete. Then there exists an element $p\in \mathfrak{t}_{\R}-|\fan|$. 
Choose a cone $\ctc{J}{K}$ in $\fan$ which has the minimum distance from $p$ in $\mathfrak{t}_{\R}$. This cone $\ctc{J}{K}$ is not contained in the interior of $|\fan|$ because of the minimality of the distance. This implies that $\ctc{J}{K}$ and $\partial|\fan|$ have non-empty intersection. In particular, $\partial|\fan|$ must be non-empty (cf.\ \cite[Hint of Exercise~3.4.12]{CLS}). This contradicts to the fact $\partial|\fan|=\emptyset$ proved above.
Therefore, the fan $\fan$ is complete.
\end{proof}

\vspace{10pt}

%%%%%%%%%%%%%%%%%%%
\subsection{Projectivity of $X(\fan)$}
%%%%%%%%%%%%%%%%%%%
To show that $X(\fan)$ is projective, we construct an ample divisor on $X(\fan)$ as follows.
For $i\in I$, let $\Diva{i}$ be the torus invariant Weil divisor on $X(\fan)$ corresponding to the ray vector $-\alpha^{\vee}_i$ (see \eqref{eq: invariant divisor}). The goal of this subsection is to prove the following claim.

\begin{proposition}\label{prop: ample divisor}
$\sum_{i\in I}mD_{-\alpha^{\vee}_{i}}$ is an ample divisor on $X(\fan)$ for some $m\in\Z_{>0}$.
In particular, $X(\fan)$ is projective.
\end{proposition}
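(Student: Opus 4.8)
The plan is to exhibit an explicit strictly convex support function on the fan $\fan$ whose associated divisor is a positive multiple of $\sum_{i\in I} \Diva{i}$, and then invoke the toric criterion \cite[Theorem~6.1.14]{CLS} that a Cartier divisor on a complete toric variety is ample if and only if its support function is strictly convex. Since $X(\fan)$ is already known to be complete (Proposition~\ref{prop: complete}), strict convexity will immediately give ampleness and hence projectivity.

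First I would recall the support-function formalism. A torus-invariant $\Q$-divisor $D=\sum_{i\in I}a_i\Diva{i}+\sum_{i\in I}b_i\Divw{i}$ determines a piecewise-linear function $\varphi_D$ on $|\fan|=\mathfrak{t}_{\R}$ that is linear on each maximal cone $\sigma_J$, determined by the values $\varphi_D(-\alpha^{\vee}_i)=-a_i$ and $\varphi_D(\cvarpi_i)=-b_i$ on the ray generators. Taking $a_i=1$ for all $i$ and $b_i=0$ for all $i$, the candidate function $\varphi$ satisfies $\varphi(-\alpha^{\vee}_i)=-1$ and $\varphi(\cvarpi_i)=0$. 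On the maximal cone $\sigma_J=\ctc{J}{I-J}$, whose generators $\{-\alpha^{\vee}_j\mid j\in J\}\cup\{\cvarpi_k\mid k\in I-J\}$ are linearly independent by Lemma~\ref{lem: dim of cones}, there is a unique linear functional $m_J\in\RL\otimes\Q$ representing $\varphi$ on $\sigma_J$, characterized by $\langle m_J,-\alpha^{\vee}_j\rangle=-1$ for $j\in J$ and $\langle m_J,\cvarpi_k\rangle=0$ for $k\in I-J$. Using $\langle\alpha_i,\cvarpi_k\rangle=\delta_{ik}$, writing $m_J=\sum_{i\in I}c_i\alpha_i$ forces $c_k=0$ for $k\in I-J$, and the remaining coefficients $(c_j)_{j\in J}$ are determined by the invertible Cartan system $\sum_{j\in J}\langle\alpha_i,\alpha^{\vee}_j\rangle c_j=1$ $(i\in J)$, exactly the nonsingular matrix from the proof of Lemma~\ref{lem: dim of cones}. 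This shows $\varphi$ is a well-defined Cartier $\Q$-divisor (some integer multiple $m\varphi$ is integral, giving the divisor $\sum_{i\in I}m\Diva{i}$).

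The heart of the argument, and the step I expect to be the main obstacle, is verifying that $\varphi$ is \emph{strictly} convex, i.e.\ that for each maximal cone $\sigma_J$ the linear functional $m_J$ satisfies $\langle m_J,v\rangle>\varphi(v)$ for every ray generator $v$ not lying in $\sigma_J$ (equivalently, $m_J$ strictly dominates on the complementary rays, so distinct maximal cones receive distinct functionals and $\varphi$ lies strictly below $m_J$ off $\sigma_J$). Because two adjacent maximal cones $\sigma_J,\sigma_{J'}$ share a wall obtained by swapping a single index $\ell$ in or out (as in the proof of Lemma~\ref{lem: complete 2}, where $\sigma_{J'}$ replaces the ray $\cvarpi_\ell$ by $-\alpha^{\vee}_\ell$ across the common facet), strict convexity across each wall reduces to a single inequality comparing $m_J$ and $m_{J'}$ on the two opposite ray generators $\cvarpi_\ell$ and $-\alpha^{\vee}_\ell$. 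I would reduce the wall-crossing inequality to a sign computation involving entries of the inverse Cartan matrix: the key positivity input is that the inverse of a Cartan matrix has nonnegative (in fact positive, for each connected component) entries, which controls the sign of $\langle m_J-m_{J'},\,\cdot\,\rangle$ on the swapped generator. Checking all walls simultaneously via this local criterion \cite[Lemma~6.1.13]{CLS} then yields strict convexity on the whole complete fan.

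Finally, once strict convexity is established, I conclude that $\varphi$ defines an ample $\Q$-Cartier divisor; clearing denominators, there is $m\in\Z_{>0}$ with $m\varphi$ integral and strictly convex, so $\sum_{i\in I}m\Diva{i}$ is ample on the complete toric variety $X(\fan)$, and therefore $X(\fan)$ is projective. A clean alternative for the convexity bookkeeping, if the direct inverse-Cartan-matrix estimate proves delicate, is to argue inductively on $|I|$ using the recursive structure of the cones $\ctc{J}{K}$ under passing to full Dynkin subdiagrams, but I expect the explicit wall-crossing computation to be the most transparent route.
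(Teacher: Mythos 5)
Your proposal is correct, but it routes through genuinely different toric machinery than the paper. The paper never introduces support functions: after completeness (Proposition~\ref{prop: complete}), it applies the toric Kleiman criterion \cite[Theorem~6.3.13]{CLS}, and the entire content of its proof is a sign computation for the wall relation attached to each codimension-one cone (Lemma~\ref{lem: wall positivity}), which \cite[Proposition~6.4.4]{CLS} translates into $\Diva{i}\cdot C\ge 0$ for every invariant curve $C$, with strict positivity for at least one $i$ (Corollary~\ref{cor: non-negative}). You instead verify strict convexity of the support function via \cite[Lemma~6.1.13]{CLS} and \cite[Theorem~6.1.14]{CLS}. These are dual expressions of the same local positivity across each wall, and both arguments hinge on the identical key input, the nonnegativity of inverse Cartan matrices; indeed, the linear system you solve on each maximal cone (coefficient matrix the transpose of $C_J$, all-ones right-hand side) and the one the paper solves at each wall (coefficient matrix $C_{J'}$, a standard basis vector as right-hand side) are essentially transposed versions of one another. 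What your route buys is the explicit linear functionals $m_J$, i.e.\ the vertex data of the polytope of the divisor, which is more information than intersection numbers alone; what the paper's route buys is the nefness-type statement of Corollary~\ref{cor: non-negative} as a byproduct, plus freedom from the normalization issues noted below.

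Three details in your write-up need attention, none of them fatal. (i) Your Cartan system should read $\sum_{i\in J}\langle\alpha_i,\alpha^{\vee}_j\rangle c_i=1$ for $j\in J$, so it is the transpose of $C_J$ that appears; this is harmless, since the transpose of a Cartan matrix is the Cartan matrix of the dual root system and inverse-nonnegativity still applies. (ii) Strict convexity requires two sign facts, and you named only one: the inequality $\langle m_J,\cvarpi_j\rangle=c_j>0$ for $j\in J$ uses that the inverse Cartan matrix has nonnegative entries with strictly positive diagonal, while the inequality $\langle m_J,-\alpha^{\vee}_k\rangle=-\sum_{j\in J}c_j\langle\alpha_j,\alpha^{\vee}_k\rangle\ge 0>-1$ for $k\in I-J$ uses nonpositivity of the off-diagonal entries of the Cartan matrix itself; note that these two checks verify the strict-convexity inequality on every ray outside $\sigma_J$ directly, so you do not even need the wall-by-wall reduction. (iii) As the paper points out in Remark~\ref{rem: bo-ch-sm}, the vector $-\alpha^{\vee}_i$ need not be primitive in $\CWL$ (types A$_1$ and B$_k$), so the dictionary $\varphi_D(-\alpha^{\vee}_i)=-a_i$ is off by the integer $d_i$ defined by $-\alpha^{\vee}_i=d_iu_{\rho_i}$ with $u_{\rho_i}$ primitive; the clean fix is to normalize on primitive generators, $\varphi(u_{\rho_i})=-1$ and $\varphi(\cvarpi_i)=0$, after which the identical computation (right-hand side now the positive vector $(d_j)_{j\in J}$ instead of all ones) goes through verbatim and yields ampleness of the $\Q$-Cartier divisor $\sum_{i\in I}\Diva{i}$, hence of the Cartier divisor $\sum_{i\in I}m\Diva{i}$ once $m$ clears denominators.
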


To prove this claim, we will apply toric Kleiman criterion (\cite[Theorem~6.3.13]{CLS}) since we know that $X(\fan)$ is complete from the previous subsection.
Because of this reason, computations of intersection numbers of divisors and curves in $X(\fan)$ will be important.
Here, since $X(\fan)$ is simplicial, every Weil divisor on $X(\fan)$ is $\Q$-Cartier (\cite[Proposition~4.2.7]{CLS}), and hence intersection numbers of Weil divisors and curves in $X(\fan)$ are defined (e.g.\ \cite[Sect.\ 6.3]{CLS}).

Let us first study the intersection numbers of $\Diva{i}$ and torus invariant irreducible curves in $X(\fan)$.
Such a curve corresponds to a codimension 1 cone $\ctc{J}{K}\in \fan$ (with $J\cap K=\emptyset$) satisfying $|J|+|K|=\rkg-1$ (Lemma~\ref{lem: dim of cones}). 
Let us write $J\sqcup K=I-\{\ell\}$.
To compute intersection numbers of $\Diva{i}$ and this curve, we need to write down a wall relation (with respect to the codimension 1 wall spanned by $\ctc{J}{K}$) as follows.
The cone $\ctc{J}{K}$ is contained in 2 maximal cones $\ctc{\{\ell\}\sqcup J}{K}$ and $\ctc{J}{K\sqcup\{\ell\}}$, and ray vectors of these two cones consist of $\rkg+1$ vectors:
\begin{align*}
-\alpha^{\vee}_{\ell}, \ -\alpha^{\vee}_j\ (j\in J), \ \cvarpi_k\ (k\in K), \ \cvarpi_{\ell} .
\end{align*}
Hence, there is a linear relation (called a wall relation) between those $\rkg+1$ ray vectors which is unique up to a scalar multiplication:
\begin{align}\label{eq: wall relation}
x_{\ell}(-\alpha^{\vee}_{\ell}) + \sum_{j\in J} x_j (-\alpha^{\vee}_j)
+ \sum_{k\in K} y_k \cvarpi_k
+ y_{\ell} \cvarpi_{\ell} = 0
\end{align}
for some coefficients $x_{\ell}>0$, $x_j\in \R$ $(j\in J)$, $y_k\in \R$ $(k\in K)$, $y_{\ell}>0$ (see \cite[Sect.\ 6.3]{CLS}. 
Set $J'\coloneqq \{\ell\}\sqcup J$.
By taking the pairing $\langle -\alpha_i,\ \ \rangle$ to this equation for $i\in J'$, we obtain that
\begin{align*}
x_{\ell} \langle \alpha_i , \alpha^{\vee}_{\ell}\rangle + \sum_{j\in J} x_j \langle \alpha_i , \alpha^{\vee}_{j}\rangle - y_{\ell} \delta_{i\ell}
 = 0 \qquad (i\in J')
\end{align*}
since $J'\cap K=(\{\ell\}\sqcup J)\cap K=\emptyset$.
By including the first summand to the second summand, we can expressed this equality as 
\begin{align*}
\sum_{j\in J'} \langle \alpha_i , \alpha^{\vee}_{j}\rangle x_j 
 = \delta_{i\ell} y_{\ell} 
  \qquad (i\in J').
\end{align*}
Let $C_{J'}=(\langle \alpha_i , \alpha^{\vee}_{j}\rangle)_{i,j\in J'}$ be the Cartan matrix associated to the Dynkin diagram $J'$, as in the proof of Lemma~\ref{lem: dim of cones}.
Then, we obtain
\begin{align*}
x_j = \sum_{i\in J'} (C_{J'}^{-1})_{j,i} \delta_{i\ell}y_{\ell} 
  \qquad (j\in J').
\end{align*}
Since $C_{J'}$ is a Cartan matrix, we have $(C_{J'}^{-1})_{j,i}\ge0$ for all $i,j\in J'$ (\cite{lu-ti92} or \cite[Sect.~13.1]{Humphreys78}).
Recalling that $y_{\ell}>0$ in \eqref{eq: wall relation}, we conclude that $x_j \ge 0$ for $j\in J'$. In particular, we obtain
\begin{align*}
x_j \ge 0 \qquad (j\in J)
\end{align*}
in the wall relation \eqref{eq: wall relation}.
Summarizing this observation, we obtain the following.

\begin{lemma}\label{lem: wall positivity}
For a codimension $1$ cone $\ctc{J}{K}\in\fan$ and $\ell\in I-(J\sqcup K)$, the wall relation \eqref{eq: wall relation} with $x_{\ell}>0$, $y_{\ell}>0$ satisfies $x_j \ge 0$ for all $j\in J$.
\end{lemma}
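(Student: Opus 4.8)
The plan is to solve the wall relation \eqref{eq: wall relation} explicitly for the coefficients $x_j$ and then read off their signs. First I would pair \eqref{eq: wall relation} with $-\alpha_i$ for each $i$ in the enlarged index set $J'\coloneqq\{\ell\}\sqcup J$. Pairing against precisely these simple roots is the decisive choice: since $J'\cap K=\emptyset$, we have $\langle\alpha_i,\cvarpi_k\rangle=\delta_{ik}=0$ for every $k\in K$, so all the contributions of the coweight rays $\cvarpi_k$ $(k\in K)$ vanish, and the only surviving coweight term comes from $\cvarpi_\ell$, contributing $-y_\ell\delta_{i\ell}$. This collapses the single vector identity \eqref{eq: wall relation} into the square linear system
\begin{align*}
 \sum_{j\in J'} \langle \alpha_i,\alpha^{\vee}_j\rangle\, x_j = \delta_{i\ell}\, y_{\ell} \qquad (i\in J').
\end{align*}

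Next I would identify the coefficient matrix $C_{J'}=(\langle\alpha_i,\alpha^{\vee}_j\rangle)_{i,j\in J'}$ with the Cartan matrix of the full sub-Dynkin diagram $J'\subseteq I$, exactly as in the proof of Lemma~\ref{lem: dim of cones}. In particular $C_{J'}$ is nonsingular, so the system has the unique solution $x_j=y_\ell\,(C_{J'}^{-1})_{j,\ell}$ for $j\in J'$. To finish I would invoke the classical fact that every entry of the inverse of a Cartan matrix is non-negative (\cite{lu-ti92}, \cite[Sect.~13.1]{Humphreys78}); together with the hypothesis $y_\ell>0$ this yields $x_j\ge0$ for all $j\in J'$, hence in particular for all $j\in J$, which is the asserted conclusion.

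I do not expect a genuine obstacle here. Apart from the non-negativity of the inverse Cartan matrix, which I would cite rather than reprove, the argument is a short linear-algebra manipulation. The one point requiring care is the bookkeeping that makes the $K$-indexed coweight terms drop out of the pairing, and this is exactly where the disjointness hypothesis $J\cap K=\emptyset$ (hence $J'\cap K=\emptyset$) is used; note also that $\ell\in I-(J\sqcup K)$ guarantees $\ell\notin K$, so the extra coweight $\cvarpi_\ell$ does not coincide with any $\cvarpi_k$ already present in the relation.
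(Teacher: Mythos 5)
Your proposal is correct and follows essentially the same route as the paper: pairing the wall relation with $-\alpha_i$ for $i\in J'=\{\ell\}\sqcup J$, using $J'\cap K=\emptyset$ to kill the $K$-indexed coweight terms, solving the resulting square system $\sum_{j\in J'}\langle\alpha_i,\alpha^{\vee}_j\rangle x_j=\delta_{i\ell}y_{\ell}$ via the sub-Cartan matrix $C_{J'}$, and invoking the non-negativity of the entries of $C_{J'}^{-1}$ (with the same references) to conclude $x_j=y_{\ell}(C_{J'}^{-1})_{j,\ell}\ge 0$. No gaps; this is the paper's argument.
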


\vspace{10pt}

Note that $x_j$ appearing in this lemma is the coefficient for the ray vector $-\alpha^{\vee}_j$ in the wall relation \eqref{eq: wall relation}. 
Let $C_{J,K}$ be the torus invariant irreducible curve in $X(\fan)$ corresponding to a codimension 1 cone $\ctc{J}{K}$.
We denote by $\Diva{i}\cdot C_{J,K}\in\Q$ their intersection number.
Now, Lemma~\ref{lem: wall positivity} and \cite[Proposition 6.4.4]{CLS} implies that we have 
\begin{align*}
 \begin{cases}
  \Diva{i}\cdot C_{J,K}
  >0 \quad \text{if $i=\ell$},\\
  \Diva{i}\cdot C_{J,K}
  \ge0 \quad \text{if $i\in J$},\\
  \Diva{i}\cdot C_{J,K}
  =0 \quad \text{if $i\in I-(\{\ell\}\sqcup J)$}.
 \end{cases}
\end{align*}
Thus, we obtain the following claim.

\begin{corollary}\label{cor: non-negative}
For a torus invariant irreducible curve $C\subseteq X(\fan)$ and a torus invariant divisor $\Diva{i}$, the intersection number $\Diva{i}\cdot C$ is non-negative. 
Moreover, for such a curve $C$, there exists $\ell\in I$ such that the intersection number $D_{-\alpha^{\vee}_{\ell}}\cdot C$ is strictly positive.
\end{corollary}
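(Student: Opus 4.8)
The plan is to read off both assertions directly from the three-case intersection formula displayed immediately before the statement, which in turn rests on Lemma~\ref{lem: wall positivity}. First I would recall that every torus invariant irreducible curve $C\subseteq X(\fan)$ corresponds bijectively to a codimension $1$ cone of $\fan$; by Lemma~\ref{lem: dim of cones} such a cone has the form $\ctc{J}{K}$ with $|J|+|K|=\rkg-1$, so that $J\sqcup K=I-\{\ell\}$ for a unique index $\ell\in I$, and $C=C_{J,K}$.

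Next I would invoke the toric intersection formula \cite[Proposition~6.4.4]{CLS} together with the wall relation \eqref{eq: wall relation}. The ray spanned by $-\alpha^{\vee}_i$ is a generator of one of the two maximal cones straddling the wall $\ctc{J}{K}$ only when $i\in\{\ell\}\sqcup J$, and in that case the sign of $\Diva{i}\cdot C$ is governed by the corresponding coefficient in \eqref{eq: wall relation}. Since $x_{\ell}>0$ and $y_{\ell}>0$ by construction, while $x_j\ge 0$ for all $j\in J$ by Lemma~\ref{lem: wall positivity}, this reproduces exactly the trichotomy already recorded: $\Diva{i}\cdot C>0$ when $i=\ell$, $\Diva{i}\cdot C\ge 0$ when $i\in J$, and $\Diva{i}\cdot C=0$ when $i\in I-(\{\ell\}\sqcup J)=K$.

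The first assertion then follows at once: since $I=\{\ell\}\sqcup J\sqcup K$, every index $i\in I$ falls into one of the three cases, and in each of them $\Diva{i}\cdot C\ge 0$. For the second assertion, I would simply take the index $\ell$ attached to $C$ as above; the first case of the trichotomy gives $\Diva{\ell}\cdot C>0$, which is precisely the required strict positivity.

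There is essentially no remaining obstacle at this stage: all the substance has been absorbed into Lemma~\ref{lem: wall positivity}, whose proof depends on the non-negativity of the entries of the inverse Cartan matrix $C_{J'}^{-1}$. The only points that require care are bookkeeping ones—verifying that the index set decomposes as $I=\{\ell\}\sqcup J\sqcup K$, and confirming that for $i\in K$ the ray $-\alpha^{\vee}_i$ genuinely fails to appear among the generators of the two maximal cones $\ctc{\{\ell\}\sqcup J}{K}$ and $\ctc{J}{K\sqcup\{\ell\}}$, so that its intersection number vanishes identically rather than merely being non-negative.
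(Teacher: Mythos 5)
Your proposal is correct and follows essentially the same route as the paper: the paper also derives the trichotomy ($\Diva{i}\cdot C_{J,K}>0$ for $i=\ell$, $\ge 0$ for $i\in J$, $=0$ for $i\in K$) from the wall relation \eqref{eq: wall relation}, Lemma~\ref{lem: wall positivity}, and \cite[Proposition~6.4.4]{CLS}, and then reads off both assertions of the corollary exactly as you do.
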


\vspace{10pt}

We now prove Proposition~\ref{prop: ample divisor}.

\begin{proof}[Proof of Proposition~$\ref{prop: ample divisor}$]
Recall that every Weil divisor on $X(\fan)$ is $\Q$-Cartier. Hence,  there exists $m\in\Z_{>0}$ such that $\sum_{i\in I}mD_{-\alpha^{\vee}_{i}}$ is a Cartier divisor.
We show that $\sum_{i\in I}mD_{-\alpha^{\vee}_{i}}$ is ample.
Since $X(\fan)$ is complete (Proposition~\ref{prop: complete}), toric Kleiman criterion \cite[Theorem~6.3.13]{CLS} claims that it is enough to show that the intersection numbers with torus invariant curves are all positive.
The latter criterion follows  from Corollary~\ref{cor: non-negative}, and hence we obtain the desired conclusion.
\end{proof}

\vspace{20pt}

%%%%%%%%%%%%%%%%%%%
%%%%%%%%%%%%%%%%%%%
\section{The Peterson variety}
%%%%%%%%%%%%%%%%%%%
%%%%%%%%%%%%%%%%%%%
Recall that $B$ is a Borel subgroup of $G$ (see Section~\ref{sec: set up} for the notations).
Let $B^-$ be the opposite Borel subgroup of $G$ so that $T=B\cap B^-$. We denote by $U\subset B$ and $U^-\subset B^-$ the unipotent radicals of $B$ and $B^-$, respectively.
Let $\e{i}\in \mathfrak{g}_{\alpha_i}$ and $\f{i}\in \mathfrak{g}_{-\alpha_i}$ be non-zero elements for each $i\in I$, and let $\nil\in\mathfrak{g}$ be the regular nilpotent element defined by 
\begin{align*}
\nil\coloneqq\sum_{i\in I}\e{i} \in \mathfrak{g}.
\end{align*}
The \textbf{Peterson variety} $Y\subseteq G/B$ is defined to be 
\begin{align}\label{eq:def of Pet}
Y \coloneqq \left\{gB \in G/B \ \left| \ \text{Ad}_{g^{-1}}\nil \in \mathfrak{b} \oplus \bigoplus_{i\in I}\mathfrak{g}_{-\alpha_i}\right. \right\}.
\end{align}
It is known that $Y$ is irreducible (\cite{Peterson} or \cite[Corollary 14]{pre18}).
Since the flag variety $G/B$ is projective, it follows that $Y$ is a projective variety. Note that we have $\dim_{\C}Y=\rkg$, where $\rkg$ is the rank of $G$ (\cite{Peterson} or \cite[Corollary~4.13]{pre13}).

As observed in \cite[Sect.~5]{ha-ty17}, there is a $1$-dimensional torus $S\subseteq T(\subseteq G)$ acting on $Y$ which we now recall in what follows. Consider a surjective homomorphism
\begin{align*}
 \phi \colon T \rightarrow (\C^{\times})^I
 \quad ; \quad
 t\mapsto (\alpha_1(t),\ldots,\alpha_{\rkg}(t))
\end{align*}
between tori of the same dimension.
We define $S\subseteq T$ as a $1$-dimensional subtorus (i.e.\ $S\cong\C^{\times}$) given by
\begin{align*}
 S \coloneqq \text{the identity component of}\ \phi^{-1}\{(c,\ldots,c)\in(\C^{\times})^I\mid c\in\C^{\times}\}.
\end{align*}
By definition, the homomorphisms $\alpha_i|_S \colon S\rightarrow \C^{\times}$ for $i\in I$ are all the same. So we write
\begin{align}\label{eq: def of alpha map}
 \alpha_S(t)\coloneqq\alpha_1(t)=\cdots=\alpha_{\rkg}(t)\in\C^{\times} \qquad \text{for $t\in S$}.
\end{align}
Since $S$ is a subtorus of $T$, it acts on $G/B$ by the multiplication from the left.
The next lemma was observed in \cite[Lemma~5.1]{ha-ty17}.

\begin{lemma}\label{lem: S-action on Y}
The $S$-action on $G/B$ preserves $Y$. 
In particular, $S$ acts on $Y$ by the multiplication from the left.
\end{lemma}

\begin{proof}
For $t\in S$ and $gB\in Y$, we have $tgB\in Y$ since 
\begin{align*}
 \text{Ad}_{(tg)^{-1}}\nil
 =\text{Ad}_{g^{-1}}\text{Ad}_{t^{-1}}\sum_{i\in I}\e{i}
 =\alpha_S(t^{-1})\cdot \text{Ad}_{g^{-1}}\sum_{i\in I}\e{i} \in \mathfrak{b} \oplus \bigoplus_{i\in I}\mathfrak{g}_{-\alpha_i},
\end{align*}
where $\alpha_S(t)$ is the one defined in \eqref{eq: def of alpha map}.
\end{proof}

In \cite[Sect.~5.1]{ha-ty17}, the fixed point set $Y^S$ is explicitly determined as we explain below.
They first observed that the fixed point set of the $S$-action on $G/B$ coincides with that of the $T$-action on $G/B$ (given by the  left multiplication), that is, $(G/B)^S = (G/B)^T$. This implies that we have
\begin{align}\label{eq: S-fixed point 10}
 Y^S = Y\cap (G/B)^S = Y\cap (G/B)^T.
\end{align}
For the fixed point set $(G/B)^T$, there is a bijection 
\begin{align*}
 W \rightarrow (G/B)^T 
 \quad ; \quad w\mapsto \rep{w}B,
\end{align*}
where $\rep{w}\in N(T)$ is a representative of $w\in W=N(T)/T$. 
We identify $W$ and $(G/B)^T$ in the rest of this paper.
For $i\in I$, we denote by $s_i\in W$ the simple reflection associated with $\alpha_i$.
For $J\subseteq I$, we denote by $w_J$ the longest element in the parabolic subgroup $W_J\subseteq W$ generated by $s_j$ for $j\in J$. Then, we have from \cite[Proposition~5.8]{ha-ty17} that
\begin{align}\label{eq: S-fixed point of Y}
 Y^S = \{ w_J \in W \mid J\subseteq I\}.
\end{align}

\vspace{10pt}

%%%%%%%%%%%%%%%%%%%
\subsection{Line bundles over $G/B$}\label{subsec: line bundles}
%%%%%%%%%%%%%%%%%%%
As is well-known, a weight of the maximal torus $T$ defines a line bundle over $G/B$, and hence it gives a line bundle over $Y$ by restriction.
To fix notations, let us explain this construction below.
Let $\lambda \colon T\rightarrow \C^{\times}$ be a weight of $T$. By composing this with the canonical projection $B \twoheadrightarrow T$, we obtain a homomorphism $\lambda \colon B\rightarrow \C^{\times}$ which by slight abuse of notation we also denote by $\lambda$. Let $\C_{\lambda}=\C$ be the 1 dimensional representation of $B$ given by $b\cdot z=\lambda(b)z$ for $b\in B$ and $z\in\C$. Since the quotient map $\pr \colon G\rightarrow G/B$ is a principal $B$-bundle, we can take an associated line bundle 
\begin{align*}
 \LB_{\lambda} \coloneqq (G\times \C)/B,
\end{align*}
where $B$ acts on $G\times \C$ by 
\begin{align}\label{eq:def of L_lambda 2}
(g,z)\cdot b=(gb,\lambda(b)z)
\end{align}
for $b\in B$ and $(g,z)\in G\times \C$. 
We note that $\LB_{\lambda}$ admits an action of $G$ given by
\begin{align}\label{eq:def of L_lambda 3}
 g'\cdot [g,z] \coloneqq [g'g,z]
 \qquad (g'\in G, \ [g,z]\in \LB_{\lambda}),
\end{align}
and this makes $\LB_{\lambda}$ a $G$-equivariant line bundle over $G/B$, as is well-known.

\vspace{10pt}

%%%%%%%%%%%%%%%%%%%
\subsection{A section of $L_{\lambda}$ over $G/B$}\label{subsec: varpi}
%%%%%%%%%%%%%%%%%%%
In this subsection, we review a construction of a distinguished section of the line bundle $L_{\lambda}$ over $G/B$ associated to a dominant weight $\lambda$. 

Recall that $\WL$ is the weight lattice of $T$.
For a dominant weight $\lambda\in \WL$ (i.e.\ the coefficient of $\lambda$ for each $\varpi_i$ is $\ge0$), we denote by $V(\lambda)$ the irreducible representation of $G$ with the highest weight $\lambda$. 
By regarding it as a representation of $T$, we obtain the weight decomposition:
\begin{align*}
V(\lambda) = \bigoplus_{\mu\in \WL} V(\lambda)_{\mu},
\end{align*}
where $V(\lambda)_{\mu}$ is the weight space of $V(\lambda)$ of weight $\mu\in\WL=\Hom(T,\C^{\times})$. We note that $V(\lambda)_{\mu}=0$ unless $\mu\le \lambda$.
Let $v_{\lambda}\in V(\lambda)$ be a highest weight vector.
We consider a function
\begin{align}\label{eq: def of Delta}
 \Delta_{\lambda} \colon G \rightarrow \C
 \quad ; \quad
 g \mapsto (gv_{\lambda})_{\lambda},
\end{align} 
where $(gv_{\lambda})_{\lambda}$ denotes 
the coefficient of $v_{\lambda}$ for the weight decomposition of $gv_{\lambda}\in V(\lambda)$.
The definition of $\Delta_{\lambda}$ does not depend on the choice of a highest weight vector $v_{\lambda}$.

\begin{remark}\label{rem: picking up coefficient}
{\rm
Let $(\ ,\ )$ be the Shapovalov form on $V(\lambda)$ (see \cite[Sect.\ 3.14 and 3.15]{Humphreys08}) satisfying $(v_{\lambda},v_{\lambda})=1$.
Then one may also express the function $\Delta_{\lambda} \colon G\rightarrow \C$ as $\Delta_{\lambda}(g)=(v_{\lambda},gv_{\lambda})$ for $g\in G$.}
\end{remark}

\vspace{5pt}

\begin{lemma}\label{lem: double equivalence}
For a dominant weight $\lambda\in\WL$, the function $\Delta_{\lambda} \colon G \rightarrow \C_{\lambda}$ is equivariant in the following senses:
\begin{itemize}
\item[$(1)$] $\Delta_{\lambda}(gb) = \lambda(b) \Delta_{\lambda}(g)$ \quad for $g\in G$ and $b\in B$, \vspace{5pt}
\item[$(2)$] $\Delta_{\lambda}(b^-g) = \lambda(b^-) \Delta_{\lambda}(g)$ \quad for $g\in G$ and $b^-\in B^-$.
\end{itemize} 
\end{lemma}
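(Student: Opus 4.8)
The plan is to prove both equivariance properties directly from the weight-space description of $\Delta_{\lambda}$ in \eqref{eq: def of Delta}, using only that $v_{\lambda}$ is a highest weight vector and that $\lambda$ is the maximal weight occurring in $V(\lambda)$. Throughout I would use that taking the $\lambda$-coefficient $(\ \cdot\ )_{\lambda}$ of a vector, i.e.\ the projection onto the one-dimensional weight space $V(\lambda)_{\lambda}=\C v_{\lambda}$, is $\C$-linear.

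For statement $(1)$, I would first record that since $v_{\lambda}$ is a highest weight vector, it is fixed by the unipotent radical $U$ and is scaled by $\lambda$ under $T$; writing $b=tu$ with $t\in T$, $u\in U$, this gives $bv_{\lambda}=\lambda(t)v_{\lambda}=\lambda(b)v_{\lambda}$ for every $b\in B$, where $\lambda(b)$ denotes the value of $\lambda$ on the $T$-component of $b$ as in Section~\ref{subsec: line bundles}. Then
\[
\Delta_{\lambda}(gb)=\bigl(g(bv_{\lambda})\bigr)_{\lambda}=\lambda(b)\,(gv_{\lambda})_{\lambda}=\lambda(b)\,\Delta_{\lambda}(g),
\]
which is the desired identity.

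For statement $(2)$, I would write $b^{-}=tu^{-}$ with $t\in T$ and $u^{-}\in U^{-}$, where $U^{-}$ is the unipotent radical of $B^{-}$. The key point is that $u^{-}$ acts on $V(\lambda)$ as $\mathrm{id}+N$, where $N$ is a sum of compositions of negative root vectors and hence strictly lowers the weight (it maps each weight space $V(\lambda)_{\mu}$ into the sum of the spaces $V(\lambda)_{\nu}$ with $\nu<\mu$). For any $v\in V(\lambda)$ the component $(Nv)_{\lambda}$ then vanishes: a contribution to $V(\lambda)_{\lambda}$ could only come from a component of $v$ of weight strictly greater than $\lambda$, and there is none since $\lambda$ is the highest weight. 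Therefore $(u^{-}v)_{\lambda}=(v)_{\lambda}$, and applying $t$ multiplies the $\lambda$-component by $\lambda(t)=\lambda(b^{-})$. Taking $v=gv_{\lambda}$ yields
\[
\Delta_{\lambda}(b^{-}g)=\bigl(b^{-}(gv_{\lambda})\bigr)_{\lambda}=\lambda(b^{-})\,(gv_{\lambda})_{\lambda}=\lambda(b^{-})\,\Delta_{\lambda}(g).
\]

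The main obstacle is exactly this second claim: one must justify carefully that the lower-triangular factor $u^{-}$ does not alter the top-weight coefficient, which hinges on the maximality of $\lambda$ among the weights of $V(\lambda)$. An alternative would be to deduce both statements simultaneously from the Shapovalov-form expression $\Delta_{\lambda}(g)=(v_{\lambda},gv_{\lambda})$ in Remark~\ref{rem: picking up coefficient}, exploiting the contravariance of the form under the Chevalley anti-involution that exchanges $B$ and $B^{-}$; however, this requires setting up that anti-involution explicitly, so I expect the elementary weight-space argument above to be the cleaner route.
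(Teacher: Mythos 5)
Your proposal is correct and follows essentially the same route as the paper: in both cases one decomposes $b\in B$ (resp.\ $b^{-}\in B^{-}$) into its torus and unipotent factors, uses $uv_{\lambda}=v_{\lambda}$ for part $(1)$, and uses the fact that the unipotent factor $u^{-}$ cannot affect the coefficient of the top weight $\lambda$ for part $(2)$. The only difference is that the paper leaves the key step of $(2)$ as an observation, whereas you justify it explicitly via the $\mathrm{id}+N$ weight-lowering argument, which is a welcome amplification rather than a different method.
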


\begin{proof}
We first prove the claim~(1). Recalling that $B=UT$, we can write $b=ut$ for some $u\in U$ and $t\in T$.
Since we have $uv_{\lambda} = v_{\lambda}$ for $u\in U$, it follows that
\begin{align*}
 \Delta_{\lambda}(gb) = (gutv_{\lambda})_{\lambda}
 = \lambda(t)(gv_{\lambda})_{\lambda}.
\end{align*}
Here, we have $\lambda(t)=\lambda(b)$ since we wrote $b=ut$ above.
Therefore, we obtain the desired equality $\Delta_{\lambda}(gb) = \lambda(b) \Delta_{\lambda}(g)$.

To prove the claim~(2), we take the decomposition $B^-=U^-T$ so that we can write $b^-=u^-t$ for  some  $u^-\in U^-$ and $t\in T$.
Observe that $\lambda(t)gv_{\lambda}$ and $u^-tgv_{\lambda}$ has the same highest weight component for their weight decompositions.
Thus, we obtain the desired claim as in the previous case.
\end{proof}

\vspace{10pt}

We now obtain the following corollary which characterizes the function $\Delta_{\lambda}$.

\begin{corollary}
For a dominant weight $\lambda\in\WL$, the function $\Delta_{\lambda} \colon G \rightarrow \C$ is a regular function on $G$ whose restriction to the open dense subset $U^-B\subseteq G$ is given by
\begin{align*}
 \Delta_{\lambda}(u^-b) = \lambda(b) 
 \qquad (u^-\in U^-,\ b\in B).
\end{align*} 
In particular, $\Delta_{\lambda}$ is an extension of the character $\lambda \colon B \rightarrow \C^{\times}$.
\end{corollary}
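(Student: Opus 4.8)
The plan is to deduce the corollary directly from the two equivariance properties established in Lemma~\ref{lem: double equivalence}, together with the observation that $\Delta_\lambda$ is manifestly regular. Regularity is immediate: the map $g\mapsto gv_\lambda$ is algebraic (it is the action map for the representation $V(\lambda)$), and extracting the coefficient of $v_\lambda$ in the weight decomposition is a linear functional, so $\Delta_\lambda$ is a composition of morphisms and hence a regular function on $G$.

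First I would pin down the value of $\Delta_\lambda$ on the big cell $U^-B$. For $u^-\in U^-$ and $b\in B$, I would apply the two parts of Lemma~\ref{lem: double equivalence} in succession. Writing $g=u^-b$, part~(2) gives $\Delta_\lambda(u^-b)=\lambda(u^-)\Delta_\lambda(b)$, but $\lambda$ is trivial on the unipotent group $U^-$ (the weight $\lambda$ factors through the projection $B^-\twoheadrightarrow T$, which kills $U^-$), so this reduces to $\Delta_\lambda(u^-b)=\Delta_\lambda(b)$. Then part~(1) applied to $g=e$ (the identity) gives $\Delta_\lambda(b)=\lambda(b)\Delta_\lambda(e)$, and since $ev_\lambda=v_\lambda$ has highest-weight coefficient equal to $1$, we get $\Delta_\lambda(e)=1$. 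Combining these yields $\Delta_\lambda(u^-b)=\lambda(b)$, which is exactly the claimed formula.

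The final sentence of the corollary is then a direct consequence: restricting the formula $\Delta_\lambda(u^-b)=\lambda(b)$ to the case $u^-=e$ shows $\Delta_\lambda(b)=\lambda(b)$ for all $b\in B$, so $\Delta_\lambda$ extends the character $\lambda\colon B\to\C^\times$. I would also remark that $U^-B$ is open and dense in $G$ (the big Bruhat cell), which is standard and can be cited; this is what makes the formula on $U^-B$ a genuine characterization of the regular function $\Delta_\lambda$, since a regular function on an irreducible variety is determined by its values on any dense open subset.

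I do not anticipate a serious obstacle here, as the corollary is essentially a packaging of Lemma~\ref{lem: double equivalence}. The only point requiring a word of care is the vanishing of $\lambda$ on $U^-$: one must be clear that the extension of $\lambda$ from $T$ to $B^-$ (used implicitly when invoking part~(2)) is the one obtained via $B^-\twoheadrightarrow T$, so that $\lambda(u^-t)=\lambda(t)$ and in particular $\lambda|_{U^-}\equiv 1$. With that convention fixed, the computation on the big cell is forced, and the characterization of $\Delta_\lambda$ as the unique regular extension of $\lambda$ follows from density of $U^-B$.
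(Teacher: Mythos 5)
Your proposal is correct and follows exactly the route the paper intends: the corollary is stated without proof as an immediate consequence of Lemma~\ref{lem: double equivalence}, and your two-step application of parts~(2) and~(1) (using $\lambda|_{U^-}\equiv 1$ via the projection $B^-\twoheadrightarrow T$, and $\Delta_\lambda(e)=1$), together with the standard regularity and density observations, is precisely that intended derivation.
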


\vspace{10pt}

Because of the equivariance given in Lemma~\ref{lem: double equivalence}~(1), the function $\Delta_{\lambda} \colon G\rightarrow \C$ (for a dominant weight $\lambda$) gives rise to a well-defined section of $\LB_{\lambda}$ :
\begin{align}\label{eq: psi i by ()}
 \psi_{\lambda} \colon G/B \rightarrow \LB_{\lambda} 
 \quad ; \quad
 gB \mapsto [g,\Delta_{\lambda}(g)] ,
\end{align}
where the value $\Delta_{\lambda}(g)=(gv_{\lambda})_{\lambda}$ is the one defined in \eqref{eq: def of Delta}.
As we will see in Section~\ref{subsec: zeros of divisors}, the case $\lambda=\varpi_i$ $(i\in I)$ will play an important role in this paper.

\vspace{10pt}

%%%%%%%%%%%%%%%%%%%
\subsection{A distinguished section of $L_{\alpha_i}$ over $Y$}\label{subsec: alphai}
%%%%%%%%%%%%%%%%%%%
In general, each simple root $\alpha_i$ is not a dominant weight.
This means that the line bundle $L_{\alpha_i}$ over $G/B$ is not effective so that it does not admit a non-trivial global section.
However, after restricting on $Y$, the line bundle $L_{\alpha_i}$ admits non-trivial global sections over $Y$ as we now see below.

Let $P_Y$ be the restriction of the principal $B$-bundle $G\rightarrow G/B$ over the Peterson variety $Y\subseteq G/B$. By the definition \eqref{eq:def of Pet} of $Y$, it is simply given by 
\begin{align}\label{eq: def of PY}
 P_Y 
 = \{g\in G \mid gB\in Y\} 
 =\left\{g \in G \ \left| \ \text{Ad}_{g^{-1}}\nil \in \mathfrak{b} \oplus \bigoplus_{i\in I}\mathfrak{g}_{-\alpha_i}\right. \right\}.
\end{align}
By definition, $P_Y\subseteq G$ is preserved by the multiplication of elements of $B$ from the right, and hence $P_Y$ inherits a right $B$-action. With this action, $P_Y$ is a principal $B$-bundle over $Y$ in the 
sense that the projection map $P_Y\rightarrow Y$ is $B$-equivariantly locally trivial.
We note that $P_Y$ is irreducible since the fiber and the base space of the projection map $P_Y\rightarrow Y$ are both irreducible (cf.\ \cite[the proof of Theorem1.26]{Shafarevich}).
We now have the pullback diagram of principal $B$-bundles.
\begin{align}\label{eq: diagram of PY}
\begin{matrix}
\xymatrix{
P_Y \ar[d]\ar[r] & G \ar[d] \\
Y \ar[r] & G/B
}
\end{matrix}
\end{align}
By the same argument for the proof of Lemma~\ref{lem: S-action on Y}, 
it is straightforward to see that $P_Y$ admits an $S$-action given by the multiplication from the left.
Therefore, $P_Y$ has actions of $S$ and $B$ by the multiplication from the left and the right, respectively.
For $i\in I$, consider a function
\begin{align}\label{eq: function for small Pet}
 \qp{i} \colon P_Y \rightarrow \C
 \quad ; \quad
 g \mapsto -(\text{Ad}_{g^{-1}}\nil)_{-\alpha_i}, 
\end{align}
where $(\text{Ad}_{g^{-1}}\nil)_{-\alpha_i}$ denotes the coefficient of the root vector $\f{i}\in\mathfrak{g}_{-\alpha_i}$ for the root decomposition of $\text{Ad}_{g^{-1}}\nil\in\mathfrak{g}$.

\begin{lemma}\label{lem: single equivalence}
For $i\in I$, the function $\qp{i} \colon P_Y \rightarrow \C$ is equivariant in the following sense:
\begin{itemize}
\item[$(1)$] 
$\qp{i}(gb) = \alpha_i(b)\qp{i}(g)$
 \quad for $g\in P_Y$ and $b\in B$, \vspace{5pt}
\item[$(2)$] 
$\qp{i}(tg) =\alpha_i(t^{-1})\qp{i}(g)$
 \quad for $g\in P_Y$ and $t\in S$,
\end{itemize} 
where $\alpha_S(t)$ is defined in \eqref{eq: def of alpha map}.
\end{lemma}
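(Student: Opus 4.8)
The plan is to verify each equivariance property by a direct computation using the definition of $\qp{i}$ in \eqref{eq: function for small Pet} together with the elementary properties of the adjoint action. The key observation throughout is that $\qp{i}(g)$ picks out the coefficient of the root vector $\f{i}\in\mathfrak{g}_{-\alpha_i}$ in the root decomposition of $\text{Ad}_{g^{-1}}\nil$, so I need only track how this particular coefficient transforms under multiplying $g$ by $b\in B$ on the right and by $t\in S$ on the left.

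For claim~(1), I would compute $\text{Ad}_{(gb)^{-1}}\nil=\text{Ad}_{b^{-1}}\text{Ad}_{g^{-1}}\nil$. Writing $b=ut$ with $u\in U$ and $t\in T$ as in the proof of Lemma~\ref{lem: double equivalence}, I would analyze the effect of $\text{Ad}_{t^{-1}}$ and $\text{Ad}_{u^{-1}}$ on the $\mathfrak{g}_{-\alpha_i}$-component separately. The torus part is immediate: $\text{Ad}_{t^{-1}}$ scales the weight space $\mathfrak{g}_{-\alpha_i}$ by $(-\alpha_i)(t^{-1})=\alpha_i(t)=\alpha_i(b)$. The genuinely substantive point is that the unipotent part $\text{Ad}_{u^{-1}}$ does not disturb the $\mathfrak{g}_{-\alpha_i}$-coefficient: since $\text{Ad}_{g^{-1}}\nil$ lies in $\mathfrak{b}\oplus\bigoplus_{i\in I}\mathfrak{g}_{-\alpha_i}$ by the definition \eqref{eq: def of PY} of $P_Y$, and since $\text{Ad}_{U}$ adds only terms of strictly higher weight (preserving $\mathfrak{b}$ and sending each $\mathfrak{g}_{-\alpha_i}$ into $\mathfrak{g}_{-\alpha_i}\oplus\mathfrak{b}$ because $-\alpha_i$ is a \emph{simple} negative root whose sum with any positive root is no longer a negative root of the form $-\alpha_j$), the coefficient along $\f{i}$ is unchanged by $u$. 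This yields $\qp{i}(gb)=\alpha_i(b)\qp{i}(g)$ after restoring the minus sign in the definition.

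For claim~(2), I would compute $\text{Ad}_{(tg)^{-1}}\nil=\text{Ad}_{g^{-1}}\text{Ad}_{t^{-1}}\nil$ for $t\in S$. Using $\nil=\sum_{i\in I}\e{i}$ with $\e{i}\in\mathfrak{g}_{\alpha_i}$ and that $S\subseteq T$ acts on each $\mathfrak{g}_{\alpha_i}$ by the scalar $\alpha_i(t^{-1})=\alpha_S(t^{-1})$, which is the \emph{same} scalar for every $i\in I$ by the defining property \eqref{eq: def of alpha map} of $S$, I obtain $\text{Ad}_{t^{-1}}\nil=\alpha_S(t^{-1})\,\nil$, exactly as in the proof of Lemma~\ref{lem: S-action on Y}. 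Pulling this scalar out through $\text{Ad}_{g^{-1}}$ and extracting the $\f{i}$-coefficient gives $\qp{i}(tg)=\alpha_S(t^{-1})\qp{i}(g)=\alpha_i(t^{-1})\qp{i}(g)$, as desired.

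The main obstacle I anticipate is the unipotent invariance argument in claim~(1): one must argue carefully that conjugating by $u\in U$ leaves the coefficient of $\f{i}$ intact. The cleanest justification rests on the weight-grading structure of $\mathfrak{g}$ and the crucial fact that $-\alpha_i$ is a simple root, so that $-\alpha_i+\beta$ for a positive root $\beta$ can never equal another $-\alpha_j$; combined with the constraint that $\text{Ad}_{g^{-1}}\nil$ already sits inside $\mathfrak{b}\oplus\bigoplus_{i\in I}\mathfrak{g}_{-\alpha_i}$ for $g\in P_Y$, this confines all corrections introduced by $u$ to $\mathfrak{b}$, which is disjoint from each $\mathfrak{g}_{-\alpha_i}$. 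Both claims then follow by routine bookkeeping of scalars and the sign convention in \eqref{eq: function for small Pet}.
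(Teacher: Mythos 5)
Your proof is correct and follows essentially the same route as the paper: for (1) decompose $b=ut$ with $u\in U$, $t\in T$, observe that the unipotent part does not disturb the $\f{i}$-coefficient while the torus part scales it by $\alpha_i(t)=\alpha_i(b)$, and for (2) use that $\text{Ad}_{t^{-1}}\nil=\alpha_S(t^{-1})\nil$ exactly as in Lemma~\ref{lem: S-action on Y}. The only difference is that the paper simply asserts the key fact $\text{Ad}_u \f{i}\in \f{i}+\mathfrak{b}$, whereas you justify it by the weight argument based on $\alpha_i$ being simple (together with the essential use of $g\in P_Y$ to rule out contributions from non-simple negative root spaces), which is a welcome elaboration rather than a different approach.
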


\begin{proof}
We first prove the claim~(1).
Since $g\in P_Y$, we have $\text{Ad}_{g^{-1}}\nil \in \mathfrak{b} \oplus \bigoplus_{i\in I}\mathfrak{g}_{-\alpha_i}$ by \eqref{eq: def of PY} so that we can write
\begin{align*}
\text{Ad}_{g^{-1}}\nil =x + \sum_{i\in I} c_{i}\f{i}
\end{align*}
for some $x\in\mathfrak{b}$ and $c_{i}\in\C$ for $i\in I$. 
This means that $\qp{i}(g)=-c_{i}$ by definition.
Recalling that $B=UT$, we can write $b=ut$ for some $u\in U$ and $t\in T$.
We have $\text{Ad}_u \f{i}\in \f{i}+\mathfrak{b}$, and hence
\begin{align*}
 \text{Ad}_{(gb)^{-1}}\nil =\text{Ad}_{t^{-1}}\text{Ad}_{u^{-1}}\cdot\text{Ad}_{g^{-1}}\nil =x' + \sum_{i\in I} \alpha_i(t)c_{i}\f{i} 
\end{align*}\vspace{-10pt}\\
for some $x'\in\mathfrak{b}$.
This means that
\begin{align*}
 \qp{i}(gb) =
 -(\text{Ad}_{(gb)^{-1}}\nil)_{-\alpha_i} = -\alpha_i(t)c_{i} =\alpha_i(t) \qp{i}(g).
\end{align*}
Since we wrote $b=ut$ above, we have $\alpha_i(b)=\alpha_i(t)$.
Hence, we obtain the claim~(1).

For the claim~(2), we have 
\begin{align*}
 \text{Ad}_{(tg)^{-1}}\nil 
 =\text{Ad}_{g^{-1}}\text{Ad}_{t^{-1}}\sum_{i\in I} \e{i}
 =\alpha_S(t^{-1})\text{Ad}_{g^{-1}}\sum_{i\in I} \e{i}
 =\alpha_i(t^{-1})\text{Ad}_{g^{-1}}\nil
\end{align*}
by the definition of $\alpha_S(t)$ in \eqref{eq: def of alpha map}.
Thus, the claim~(2) follows.
\end{proof}

\vspace{10pt}

Because of the equivariance given in Lemma~\ref{lem: single equivalence}~(1), the function $\qp{i} \colon P_Y \rightarrow \C$ gives rise to a well-defined section of $L_{\alpha_i}|_Y$ over $Y$ :
\begin{align}\label{eq: desired section 1}
 \phi_{\alpha_i} \colon Y \rightarrow \LB_{\alpha_i}|_Y
 \quad ;\quad
 gB \mapsto [g,\qp{i}(g)] ,
\end{align}
where the value $\qp{i}(g)=-(\text{Ad}_{g^{-1}}\nil)_{-\alpha_i}$ is the one defined in \eqref{eq: function for small Pet}.

\vspace{20pt}

%%%%%%%%%%%%%%%%%%%
\subsection{Intersections of divisors of zeros on $Y$}\label{subsec: zeros of divisors}
%%%%%%%%%%%%%%%%%%%
The goal of this subsection is to prove Proposition~\ref{prop: non-simultaneous goal} below.
This claim will be important to construct a morphism $Y\rightarrow X(\fan)=(\C^{2I}-\EL)/T$ because of \eqref{eq: subtraction}.

\begin{proposition}\label{prop: non-simultaneous goal}
Let $i\in I$. Then we have
\begin{align*}
 \{gB \in Y \mid \Delta_{\varpi_i}(g)=0, \ \qp{i}(g)=0\}=\emptyset.
\end{align*}
Namely, $\Delta_{\varpi_i}(g)$ and $\qp{i}(g)$ cannot be zero simultaneously for each $gB\in Y$.
\end{proposition}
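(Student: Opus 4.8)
The plan is to reduce the simultaneous non-vanishing to a finite check at the $S$-fixed points of $Y$, whose complete list $Y^S=\{w_J\mid J\subseteq I\}$ is recorded in \eqref{eq: S-fixed point of Y}. Write $Z_i\coloneqq\{gB\in Y\mid \Delta_{\varpi_i}(g)=\qp{i}(g)=0\}$; this is the common zero locus of the sections $\psi_{\varpi_i}|_Y$ and $\phi_{\alpha_i}$, hence a closed subvariety of the projective variety $Y$. First I would check that $Z_i$ is $S$-invariant: for $t\in S\subseteq T$ the torus scales the weight-$\varpi_i$ space by $\varpi_i(t)$, so $\Delta_{\varpi_i}(tg)=\varpi_i(t)\Delta_{\varpi_i}(g)$, while Lemma~\ref{lem: single equivalence}(2) gives $\qp{i}(tg)=\alpha_i(t^{-1})\qp{i}(g)$; thus both defining conditions are preserved by $S$. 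Since $S\cong\C^{\times}$ is connected and $Z_i$ is complete, a $\C^{\times}$-action has a fixed point on any non-empty complete variety, so $Z_i\ne\emptyset$ would force $Z_i\cap Y^S\ne\emptyset$. It therefore suffices to prove that no fixed point $w_J$ lies in $Z_i$.

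The heart of the argument is to evaluate the two functions at a representative $\rep{w_J}\in N(T)$, where non-vanishing is independent of the representative by Lemma~\ref{lem: double equivalence}(1) and Lemma~\ref{lem: single equivalence}(1). For $\Delta_{\varpi_i}$, the vector $\rep{w_J}v_{\varpi_i}$ has weight $w_J\varpi_i$, and the weight-$\varpi_i$ space is one-dimensional and spanned by $v_{\varpi_i}$; hence the coefficient $\Delta_{\varpi_i}(\rep{w_J})$ is non-zero precisely when $w_J\varpi_i=\varpi_i$. Since $s_j\varpi_i=\varpi_i-\delta_{ij}\alpha_j$, the weight $\varpi_i$ is $W_J$-dominant, and the longest element $w_J$ fixes it if and only if $\varpi_i$ is also $W_J$-anti-dominant, i.e. $\langle\varpi_i,\alpha^{\vee}_j\rangle=0$ for all $j\in J$. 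As $\langle\varpi_i,\alpha^{\vee}_i\rangle=1$, this holds exactly when $i\notin J$. Thus $\Delta_{\varpi_i}(\rep{w_J})\ne0$ whenever $i\notin J$, which disposes of those fixed points.

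It remains to treat $i\in J$, where I would show $\qp{i}(\rep{w_J})\ne0$. Decomposing $\mathrm{Ad}_{\rep{w_J}^{-1}}\nil=\sum_{k\in I}\mathrm{Ad}_{\rep{w_J}^{-1}}\e{k}$, each summand is a non-zero root vector in $\mathfrak{g}_{w_J^{-1}\alpha_k}$, so the $\mathfrak{g}_{-\alpha_i}$-component is non-zero if and only if $w_J^{-1}\alpha_k=-\alpha_i$ for some $k$, equivalently $-w_J\alpha_i$ is a simple root. The standard description of the longest element of a parabolic gives, for $i\in J$, an involution $\tau$ of $J$ with $w_J\alpha_i=-\alpha_{\tau(i)}$; hence $-w_J\alpha_i=\alpha_{\tau(i)}$ is simple, the unique contributing index is $k=\tau(i)\in J\subseteq I$, and the corresponding summand $\mathrm{Ad}_{\rep{w_J}^{-1}}\e{\tau(i)}$ is a non-zero element of $\mathfrak{g}_{-\alpha_i}$. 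Therefore $\qp{i}(\rep{w_J})\ne0$ for $i\in J$. Combining the two cases, every $w_J$ avoids $Z_i$, so $Z_i=\emptyset$, which is the assertion.

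The main obstacle is not any single computation but organizing the reduction cleanly: one must confirm that the common zero locus is genuinely $S$-stable and complete (so that a fixed point is produced), and then invoke the precise structure of $w_J$, namely the permutation $\tau$ of $J$ realized by $-w_J$ together with the dominant/anti-dominant characterization of its fixed weights. The dichotomy $i\notin J$ versus $i\in J$ is exactly what makes the two sections complementary: $\Delta_{\varpi_i}$ detects $i\notin J$ while $\qp{i}$ detects $i\in J$, which is the structural reason their vanishing loci are disjoint.
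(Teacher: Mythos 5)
Your proof is correct, and its skeleton coincides with the paper's: both arguments show that the common zero locus is a closed, $S$-stable subset of the projective variety $Y$, invoke the fixed-point theorem for the connected torus $S\cong\C^{\times}$ to reduce to the fixed points $w_J$ of \eqref{eq: S-fixed point of Y}, and then exploit the dichotomy that $\Delta_{\varpi_i}$ is non-zero at $\dot{w}_J$ when $i\notin J$ while $q_{\alpha_i}$ is non-zero when $i\in J$. Where you genuinely diverge is in how the vanishing pattern of $\Delta_{\varpi_i}$ at the fixed points is obtained. The paper (Lemma~\ref{lem: zero=Schubert divisor}) first identifies the whole zero locus $Z(\psi_{\varpi_i})$ in $G/B$ with the opposite Schubert divisor $\Omega_{s_i}$, using the $B^-$-equivariance of $\Delta_{\varpi_i}$ and evaluation at the points $\dot{s}_kB$, and then reads off the fixed-point description \eqref{eq: fixed point 1} from the Bruhat-order criterion $w_J\ge s_i \iff i\in J$. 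You instead evaluate $\Delta_{\varpi_i}$ directly at each $\dot{w}_J$: the coefficient of $v_{\varpi_i}$ is non-zero iff $w_J\varpi_i=\varpi_i$, which by your dominant/anti-dominant argument happens iff $\langle\varpi_i,\alpha_j^{\vee}\rangle=0$ for all $j\in J$, i.e.\ iff $i\notin J$. Your route is more pointwise and elementary, avoiding Schubert divisors and the Bruhat order entirely; the paper's route buys the stronger global statement $Z(\psi_{\varpi_i})=Y\cap\Omega_{s_i}$, which it reuses later (the description \eqref{eq: fixed point 1} enters the proof of Lemma~\ref{lem: image of fixed points}). For $q_{\alpha_i}$ the two treatments agree in substance — decompose $\mathrm{Ad}_{\dot{w}_J^{-1}}\nil$ into root vectors and use $w_J(\Delta_J)=-\Delta_J$ — except that you only prove the non-vanishing direction for $i\in J$ (which is all the proposition needs), whereas the paper's Lemma~\ref{lem: fixed points of small Peterson} establishes the full equivalence, using in addition that $w_J^{-1}$ sends the simple roots outside $J$ to positive roots.
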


\vspace{10pt}

To prove this, we consider the zero loci of the sections $\psi_{\varpi_i}\colon G/B\rightarrow \LB_{\varpi_i}$ and $\phi_{\alpha_i}\colon Y\rightarrow \LB_{\alpha_i}|_Y$ since these are determined by the functions $\Delta_{\varpi_i}$ and  $\qp{i}$, respectively.

We begin with the zero locus of $\psi_{\varpi_i}$.
Since we have $\psi_{\varpi_i}(gB)=[g,\Delta_{\varpi_i}(g)]$ for $gB\in G/B$ by definition, 
Lemma~\ref{lem: double equivalence}~(2) implies that the zero locus $Z(\psi_{\varpi_i})\subset G/B$ is a Cartier divisor which is preserved by the $B^-$-action on $G/B$ from the left. Hence, it is a union of opposite Schubert divisors $\Omega_{s_k}=\overline{B^-\rep{s}_k B/B}$ $(k\in I)$, where $\rep{s}_k\in N(T)$ is a representative of the simple reflection $s_k\in W$.
The following well-known claim states that this union consists of a single divisor.

\begin{lemma}\label{lem: zero=Schubert divisor}
For $i\in I$, we have $Z(\psi_{\varpi_i})=\Omega_{s_i}$ in $G/B$.
\end{lemma}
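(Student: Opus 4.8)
The plan is to identify the zero locus $Z(\psi_{\varpi_i})$ with the opposite Schubert divisor $\Omega_{s_i}$ by showing both are given by the same equation, namely the vanishing of the function $\Delta_{\varpi_i}$. We already know from the discussion preceding the lemma that $Z(\psi_{\varpi_i})$ is a $B^-$-invariant Cartier divisor, hence a union of opposite Schubert divisors $\Omega_{s_k}$ for various $k\in I$. So the real content is to determine exactly which opposite Schubert divisors appear, and to verify that only $\Omega_{s_i}$ does.

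First I would recall that the opposite Schubert divisors are the codimension-one opposite Schubert varieties $\Omega_{s_k}=\overline{B^-\rep{s}_k B/B}$, and that the open opposite Big cell $U^-B/B$ is precisely the complement of the union $\bigcup_{k\in I}\Omega_{s_k}$. On this big cell, the preceding corollary gives $\Delta_{\varpi_i}(u^-b)=\varpi_i(b)\neq 0$, so $Z(\psi_{\varpi_i})$ contains no point of $U^-B/B$; this confirms $Z(\psi_{\varpi_i})\subseteq\bigcup_{k\in I}\Omega_{s_k}$ but does not yet isolate the index $i$. To pin down the coefficient of each $\Omega_{s_k}$, I would evaluate $\Delta_{\varpi_i}$ near the generic point of $\Omega_{s_k}$, equivalently test it on the cells $B^-\rep{s}_kB/B$. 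The cleanest route is to use the $T$-weight structure: since $\psi_{\varpi_i}$ is a global section of the $G$-equivariant line bundle $\LB_{\varpi_i}$ and the divisor $Z(\psi_{\varpi_i})$ is $B^-$-stable, its class in $\Pic(G/B)$ must equal $c_1(\LB_{\varpi_i})$, which corresponds to the weight $\varpi_i$. Writing $Z(\psi_{\varpi_i})=\sum_{k\in I} m_k\,\Omega_{s_k}$ with $m_k\ge 0$, and using that $[\Omega_{s_k}]$ is Poincaré dual to the fundamental weight $\varpi_k$ under the standard identification of $\Pic(G/B)$ with $\WL$, the equality of divisor classes forces $\sum_k m_k\varpi_k=\varpi_i$, whence $m_i=1$ and $m_k=0$ for $k\ne i$.

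Alternatively, and perhaps more in keeping with the elementary style of this section, I would argue directly by testing $\Delta_{\varpi_i}$ on representatives. For a point in the cell of $\Omega_{s_k}$, one writes $g=u^-\rep{s}_k b$ and computes $\Delta_{\varpi_i}(g)=(v_{\varpi_i},\,u^-\rep{s}_k b\, v_{\varpi_i})$ using the Shapovalov-form expression from Remark~\ref{rem: picking up coefficient}. Because $\rep{s}_k v_{\varpi_i}$ is (up to scalar) the extremal weight vector of weight $s_k\varpi_i=\varpi_i-\delta_{ik}\alpha_i$, the generic value of $\Delta_{\varpi_i}$ vanishes on $\Omega_{s_k}$ precisely when $k=i$ and is nonzero when $k\ne i$ (since for $k\ne i$ one has $s_k\varpi_i=\varpi_i$, so the highest-weight component survives). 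This shows $\Omega_{s_k}\subseteq Z(\psi_{\varpi_i})$ iff $k=i$, giving $Z(\psi_{\varpi_i})=\Omega_{s_i}$ as sets, and the multiplicity-one statement then follows from the weight computation above.

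The main obstacle I anticipate is controlling the \emph{multiplicity} of $\Omega_{s_i}$ in the divisor $Z(\psi_{\varpi_i})$ and ruling out the other components cleanly: the set-theoretic inclusion $Z(\psi_{\varpi_i})\subseteq\Omega_{s_i}$ is easy from the big-cell computation combined with the observation that $s_k$ fixes $\varpi_i$ for $k\ne i$, but verifying that $\Omega_{s_i}$ actually lies in the zero locus (i.e.\ that $\Delta_{\varpi_i}$ genuinely vanishes there and with multiplicity one) requires either the $\Pic(G/B)\cong\WL$ identification or a careful local computation of the order of vanishing. Since the lemma is labeled "well-known," I expect the author to invoke the standard fact that the section $\psi_{\varpi_i}$ of $\LB_{\varpi_i}$ cuts out exactly the Schubert divisor dual to $\varpi_i$, so the cleanest writeup is the Picard-group weight argument, with the extremal-weight-vector computation supplying the set-theoretic identification.
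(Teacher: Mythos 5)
Your proposal is correct, and its decisive step coincides with the paper's proof: the paper notes that the $B^-$-stable zero locus, being a union of opposite Schubert divisors, is determined by testing which points $\rep{s}_kB$ lie in it, and then computes $\Delta_{\varpi_i}(\rep{s}_k)$ via the weight $s_k(\varpi_i)=\varpi_i-\langle\varpi_i,\alpha^{\vee}_k\rangle\alpha_k$ --- exactly your extremal-weight-vector computation (your cell-wise version reduces to this point-wise test by the equivariance of Lemma~\ref{lem: double equivalence}). The supplementary Picard-class argument you propose is a genuinely different route, but two remarks are in order. First, the lemma as stated and as used (e.g.\ to derive \eqref{eq: fixed point 1}) is a set-theoretic equality, so no multiplicity statement is needed; and your anticipated obstacle --- that showing $\Omega_{s_i}\subseteq Z(\psi_{\varpi_i})$ requires a careful order-of-vanishing or $\Pic$ computation --- is not real: $\Delta_{\varpi_i}(\rep{s}_i)=0$ together with Lemma~\ref{lem: double equivalence} gives vanishing on the dense cell $B^-\rep{s}_iB/B$, and closedness of the zero locus then gives the inclusion on its closure $\Omega_{s_i}$. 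Second, be careful about circularity in the $\Pic$ route: invoking ``the standard fact that $\psi_{\varpi_i}$ cuts out exactly the Schubert divisor dual to $\varpi_i$'' is invoking the lemma itself; what is legitimate to quote is the Chevalley identification $[\Omega_{s_k}]\leftrightarrow\varpi_k$ under $\Pic(G/B)\cong\WL$, and with that the divisor-class argument buys you the multiplicity-one refinement, which the paper neither states nor needs.
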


\begin{proof}
As we observed above, $Z(\psi_{\varpi_i})$ is a union of opposite Schubert divisors.
We can determine this union by verifying which $\rep{s}_k B$ $(k\in I)$ belongs to $Z(\psi_{\varpi_i})$ since $\rep{s}_k B$ belongs to precisely one opposite Schubert divisor $\Omega_{s_k}$.
Thus, it suffices to prove that
\begin{align}\label{eq: sj in Omega si}
 \psi_{\varpi_i}(\rep{s}_k B)=0 \quad \text{if and only if\quad$k=i$}.
\end{align}
So let us prove \eqref{eq: sj in Omega si} in what follows.
By the definition \eqref{eq: psi i by ()}, 
the value $\psi_{\varpi_i}(\rep{s}_k B)$ reads the coefficient of the highest weight vector $v_{\varpi_i}$ for the weight decomposition of $\rep{s}_k v_{\varpi_i}\in V(\varpi_i)$. 
This vector $\rep{s}_k v_{\varpi_i}$ is a weight vector of weight $s_k(\varpi_i)$ (\cite[Sect.\ 31.1]{Humphreys75}) which can be computed as 
\begin{align*}
 s_k(\varpi_i)
 =\varpi_i-\langle \varpi_i,\alpha^{\vee}_k\rangle\alpha_k 
 =
 \begin{cases}
  \varpi_i-\alpha_i \quad &\text{if $k=i$}, \\
  \varpi_i &\text{if $k\ne i$}.
 \end{cases}
\end{align*}
Therefore, the highest weight component of $\rep{s}_k v_{\varpi_i}$ is zero if and only if $k=i$.
That is, 
\begin{align*}
 \psi_{\varpi_i}(\rep{s}_k B)=0
 \quad \text{if and only if} \quad
 k=i
\end{align*}
which is precisely the desired claim \eqref{eq: sj in Omega si}.
\end{proof}

\vspace{10pt}

Recalling that $\phi_{\alpha_i}(gB)=[g,q_{\alpha_i}(g)]=[g,-(\text{Ad}_{g^{-1}}\nil)_{-\alpha_i}]$ for $gB\in G/B$, we have
\begin{equation*}
\begin{split}
 Z(\phi_{\alpha_i}) 
 = \{gB \in Y \mid \qp{i}(g)=0\} 
 = \{gB \in Y \mid (\text{Ad}_{g^{-1}}\nil)_{-\alpha_i}=0\},
\end{split}
\end{equation*}
By Lemma~\ref{lem: single equivalence} (2), this subset $Z(\phi_{\alpha_i})\subseteq Y$ is preserved by the $S$-action on $Y$.
The next claim is a special case of \cite[Proposition~5.2]{ha-ty17}, but we give a direct proof for the readers convenience.

\begin{lemma}\label{lem: fixed points of small Peterson}
For $i\in I$, we have 
\begin{align*} 
 Z(\phi_{\alpha_i})^S = \{ w_J \in W \mid i\notin J\subseteq I \}
\end{align*}
under the description of $Y^S$ given in \eqref{eq: S-fixed point of Y}.
\end{lemma}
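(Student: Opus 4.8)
The plan is to reduce the statement to a root-theoretic computation at the $S$-fixed points. Since $Z(\phi_{\alpha_i})\subseteq Y$ is preserved by the $S$-action (Lemma~\ref{lem: single equivalence}~(2)), we have $Z(\phi_{\alpha_i})^S = Z(\phi_{\alpha_i})\cap Y^S$, and by \eqref{eq: S-fixed point of Y} it suffices to decide, for each $J\subseteq I$, whether the fixed point $w_J B$ lies in $Z(\phi_{\alpha_i})$, i.e.\ whether $\qp{i}(\rep{w}_J)=-(\text{Ad}_{\rep{w}_J^{-1}}\nil)_{-\alpha_i}$ vanishes. I first note that this vanishing is independent of the chosen representative $\rep{w}_J\in N(T)$: by Lemma~\ref{lem: single equivalence}~(1) with $t\in T\subseteq B$ we have $\qp{i}(\rep{w}_J t)=\alpha_i(t)\qp{i}(\rep{w}_J)$ with $\alpha_i(t)\ne0$, so the question is well posed on the fixed point $w_J B$.

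Next I would carry out the weight-vector computation. Writing $\nil=\sum_{k\in I}\e{k}$ with $\e{k}\in\mathfrak{g}_{\alpha_k}$ nonzero, and using that $\rep{w}_J^{-1}$ represents $w_J^{-1}$ and hence $\text{Ad}_{\rep{w}_J^{-1}}$ carries $\mathfrak{g}_{\alpha_k}$ isomorphically onto $\mathfrak{g}_{w_J^{-1}(\alpha_k)}$, each summand $\text{Ad}_{\rep{w}_J^{-1}}\e{k}$ is a nonzero vector lying in the root space $\mathfrak{g}_{w_J^{-1}(\alpha_k)}$. As $w_J^{-1}$ is a bijection on roots, the roots $w_J^{-1}(\alpha_k)$ are pairwise distinct, so there is no cancellation, and the $\mathfrak{g}_{-\alpha_i}$-component of $\text{Ad}_{\rep{w}_J^{-1}}\nil$ is nonzero precisely when there exists $k\in I$ with $w_J^{-1}(\alpha_k)=-\alpha_i$, equivalently $w_J(\alpha_i)=-\alpha_k$ is the negative of a simple root. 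Thus $w_J B\in Z(\phi_{\alpha_i})$ if and only if $w_J(\alpha_i)$ is \emph{not} a negative simple root.

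It then remains to analyze $w_J(\alpha_i)$ in the two cases. If $i\notin J$, then $w_J$ is a product of simple reflections $s_j$ with $j\in J$, each of which alters only the $\alpha_j$-coefficient; hence the $\alpha_i$-coefficient of $w_J(\alpha_i)$ stays equal to $1>0$, so $w_J(\alpha_i)$ is a positive root and in particular not a negative simple root, giving $w_J B\in Z(\phi_{\alpha_i})$. If $i\in J$, then $w_J$ is the longest element of the Weyl group of the subsystem $\Phi_J$, so $-w_J$ permutes the simple roots $\{\alpha_j\mid j\in J\}$; therefore $w_J(\alpha_i)=-\alpha_{i^{*}}$ for some $i^{*}\in J$, a negative simple root of the full system, giving $w_J B\notin Z(\phi_{\alpha_i})$. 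Combining the two cases yields $Z(\phi_{\alpha_i})^S=\{w_J\mid i\notin J\subseteq I\}$, as claimed. The only non-formal inputs are the two standard facts about the longest element $w_J$ of a parabolic subgroup used in this last paragraph; these are the point to state carefully, but no genuine obstacle arises, and the argument above reduces everything to them.
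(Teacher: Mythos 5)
Your proof is correct and follows essentially the same route as the paper: reduce to deciding whether $\qp{i}(\rep{w}_J)$ vanishes at each fixed point $w_JB$, note that $\text{Ad}_{\rep{w}_J^{-1}}\nil$ splits into nonzero root vectors for the pairwise distinct roots $w_J^{-1}(\alpha_k)$ (so no cancellation), and conclude from the standard facts that $-w_J$ permutes $\{\alpha_j \mid j\in J\}$ while $w_J$ sends the simple roots outside $J$ to positive roots. The only cosmetic difference is that you phrase the criterion via $w_J(\alpha_i)$ being a negative simple root and prove the two facts about $w_J$ directly, whereas the paper applies them to $w_J^{-1}(\alpha_k)$ and cites them.
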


\begin{proof}
Since we have
\begin{align*}
 Z(\phi_{\alpha_i})^S\subseteq Y^S= \{w_J\in W\mid J\subseteq I\}
\end{align*}
by \eqref{eq: S-fixed point of Y}, it suffices to show that $\phi_{\alpha_i}(\rep{w}_J B)=0$ if and only if $i\notin J$, where $\rep{w}_J\in N(T)$ is a representative of $w_J$.
That is, we need to show that $q_{\alpha_i}(\rep{w}_J)=(\text{Ad}_{\rep{w}_J^{-1}}e)_{-\alpha_i}=0$ if and only if $i\notin J$.
To this end, recall that
\begin{align}\label{eq: property of wJ 3}
 \text{Ad}_{\rep{w}_J^{-1}}e 
 = \sum_{k\in I}\text{Ad}_{\rep{w}_J^{-1}}e_k .
\end{align}
In the right hand side, each $\text{Ad}_{\rep{w}_J^{-1}}e_k$ is a root vector for the root $w_J^{-1}\alpha_k$.
This means that $q_{\alpha_i}(\rep{w}_J)=-(\text{Ad}_{\rep{w}_J^{-1}}e)_{-\alpha_i}=0$ if and only if 
$w_J^{-1}\alpha_k\ne -\alpha_i$ for all $k\in I$ in \eqref{eq: property of wJ 3}.
Note that we have
\begin{align}
\begin{split}\label{eq: property of wJ}
 w_J^{-1} (\Delta_J) = -\Delta_J \quad \text{and} \quad 
 w_J^{-1} (\Delta - \Delta_J) \subseteq \Phi^+ ,
\end{split}
\end{align}
where $\Delta$ is the set of simple roots and $\Delta_J\coloneqq \{\alpha_i\mid i\in J\}$
(e.g.\ the first paragraph of the proof of \cite[Lemma~5.7]{ha-ty17}).
So let us take the following decomposition of \eqref{eq: property of wJ 3}:
\begin{align}\label{eq: property of wJ 2}
 \text{Ad}_{\rep{w}_J^{-1}}e 
 = \sum_{j\in J}\text{Ad}_{\rep{w}_J^{-1}}e_j + \sum_{k\in I-J}\text{Ad}_{\rep{w}_J^{-1}}e_k .
\end{align}
By \eqref{eq: property of wJ}, we have $w_J^{-1}\alpha_k\ne -\alpha_i$ for all $k\in I$ in \eqref{eq: property of wJ 2} if and only if $i\notin J$.
Therefore, we conclude that $q_{\alpha_i}(\rep{w}_J)=0$ if and only if $i\notin J$, as desired.
\end{proof}

\vspace{10pt}

In the next claim, we denote the restricted section $\psi_{\varpi_i}|_Y\colon Y\rightarrow L_{\varpi_i}|_Y$ by the same symbol $\psi_{\varpi_i}$.

\begin{proposition}\label{prop: empty intersection}
For $i\in I$, we have 
\begin{align*}
 Z(\psi_{\varpi_i}) \cap Z(\phi_{\alpha_i})= \emptyset \quad \text{in $Y$},
\end{align*}
where $\psi_{\varpi_i}\colon Y\rightarrow L_{\varpi_i}|_Y$ and $\phi_{\alpha_i}\colon Y\rightarrow L_{\alpha_i}|_Y$ are the sections constructed in Section~$\ref{subsec: varpi}$ and Section~$\ref{subsec: alphai}$, respectively.
\end{proposition}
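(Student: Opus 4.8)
The plan is to exploit the one-dimensional torus $S$ together with the completeness of $Y$, reducing the claim to a disjointness statement about $S$-fixed points. First I observe that both zero loci are closed and $S$-invariant in $Y$. Indeed, $Z(\phi_{\alpha_i})$ is $S$-invariant by Lemma~\ref{lem: single equivalence}~(2), as already noted after its statement, while $Z(\psi_{\varpi_i})=\Omega_{s_i}$ by Lemma~\ref{lem: zero=Schubert divisor} is $B^-$-invariant and hence $S$-invariant since $S\subseteq T\subseteq B^-$; intersecting with $Y$ (which is $S$-invariant by Lemma~\ref{lem: S-action on Y}) preserves invariance. Thus $Z(\psi_{\varpi_i})\cap Z(\phi_{\alpha_i})$ is a closed $S$-invariant subset of the projective variety $Y$.

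The key reduction is the following: since $S\cong\C^{\times}$ and $Y$ is complete, every nonempty closed $S$-invariant subset $Z\subseteq Y$ contains an $S$-fixed point (pick $z\in Z$; the orbit map $\C^{\times}\to Z$, $t\mapsto t\cdot z$, extends to a morphism $\P^1\to Z$ by properness, and the image of $0\in\P^1$ is a fixed point lying in $Z$). Consequently it suffices to prove that the two loci share no $S$-fixed point, i.e.
\[
 \bigl(Z(\psi_{\varpi_i})\bigr)^S \cap \bigl(Z(\phi_{\alpha_i})\bigr)^S = \emptyset
\]
inside $Y^S=\{w_J\mid J\subseteq I\}$ of \eqref{eq: S-fixed point of Y}.

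For $Z(\phi_{\alpha_i})$ the fixed-point set is already determined in Lemma~\ref{lem: fixed points of small Peterson}: it equals $\{w_J\mid i\notin J\}$. For $Z(\psi_{\varpi_i})$ I would evaluate $\Delta_{\varpi_i}$ on the representatives $\rep{w}_J$. By \eqref{eq: def of Delta}, $\Delta_{\varpi_i}(\rep{w}_J)$ is the coefficient of $v_{\varpi_i}$ in $\rep{w}_J v_{\varpi_i}$, which is a weight vector of weight $w_J(\varpi_i)$; hence this coefficient is nonzero precisely when $w_J(\varpi_i)=\varpi_i$. Since the stabilizer of the fundamental weight $\varpi_i$ in $W$ is the parabolic subgroup $W_{I-\{i\}}$, and the support of the longest element $w_J$ of $W_J$ is exactly $J$, we get $w_J(\varpi_i)=\varpi_i$ if and only if $i\notin J$. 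Therefore $\bigl(Z(\psi_{\varpi_i})\bigr)^S=\{w_J\mid i\in J\}$, which is disjoint from $\{w_J\mid i\notin J\}=\bigl(Z(\phi_{\alpha_i})\bigr)^S$.

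The main obstacle is this last fixed-point computation for $Z(\psi_{\varpi_i})$, namely pinning down which $w_J$ fix $\varpi_i$; the cleanest route is the stabilizer/support argument above, where one must also note that $J\mapsto w_J$ is injective (again from the support statement) so that the index sets $\{J\mid i\in J\}$ and $\{J\mid i\notin J\}$ genuinely produce disjoint fixed points. Granting this, the disjointness of the two fixed-point sets combined with the $\C^{\times}$-fixed-point existence forces $Z(\psi_{\varpi_i})\cap Z(\phi_{\alpha_i})=\emptyset$. Finally, this yields Proposition~\ref{prop: non-simultaneous goal}, since $\psi_{\varpi_i}$ and $\phi_{\alpha_i}$ vanish exactly where $\Delta_{\varpi_i}$ and $\qp{i}$ do.
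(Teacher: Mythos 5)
Your proof is correct, and its skeleton coincides with the paper's: both arguments note that the two zero loci are closed and $S$-invariant, reduce the claim via the fact that any nonempty closed $S$-invariant subset of the complete variety $Y$ contains an $S$-fixed point (you prove this by the $\P^1$-limit argument; the paper cites \cite[Sect.~21.2]{Humphreys75}), and then verify that the two fixed-point sets are disjoint inside $Y^S=\{w_J\mid J\subseteq I\}$. The one genuinely different step is your computation of $Z(\psi_{\varpi_i})^S$. The paper reuses Lemma~\ref{lem: zero=Schubert divisor}: from $Z(\psi_{\varpi_i})=Y\cap\Omega_{s_i}$ and $\Omega_{s_i}^S=\Omega_{s_i}^T$ it gets $Z(\psi_{\varpi_i})^S=\{w_J\mid w_J\ge s_i\}$ and then translates the Bruhat condition $w_J\ge s_i$ into $i\in J$. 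You instead evaluate $\Delta_{\varpi_i}$ directly at the representatives $\rep{w}_J$: since $\rep{w}_J v_{\varpi_i}$ is a weight vector of weight $w_J(\varpi_i)$, the value $\Delta_{\varpi_i}(\rep{w}_J)$ is nonzero exactly when $w_J(\varpi_i)=\varpi_i$, and since the stabilizer of $\varpi_i$ in $W$ is the parabolic subgroup $W_{I-\{i\}}$ while the support of $w_J$ is exactly $J$, this happens precisely when $i\notin J$. This is the same kind of weight computation the paper performs inside the proof of Lemma~\ref{lem: zero=Schubert divisor} (there for $\rep{s}_k$), so your route is self-contained at this point: you need Lemma~\ref{lem: zero=Schubert divisor} only for $S$-invariance of $Z(\psi_{\varpi_i})$, and even that could be obtained more cheaply from Lemma~\ref{lem: double equivalence}~(2), as the paper does. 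The trade-off is that the paper leans on Schubert geometry and the Bruhat order it has already set up, while your argument replaces that with two standard representation-theoretic facts (stabilizers of dominant weights are the parabolic subgroups generated by the simple reflections fixing them, and $w_J$ has full support $J$); both yield $Z(\psi_{\varpi_i})^S=\{w_J\mid i\in J\}$, i.e.\ \eqref{eq: fixed point 1}, and hence the proposition.
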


\begin{proof}
Since $S\subseteq T\subseteq B^-$, Lemma~\ref{lem: double equivalence}~(2) and Lemma~\ref{lem: single equivalence}~(2) imply that the zero loci $Z(\psi_{\varpi_i})$ and $Z(\phi_{\alpha_i})$ are both closed subsets of $Y$ preserved under the $S$-action on $Y$.
We first show that 
\begin{align}\label{eq: fixed point intersection empty}
 Z(\psi_{\varpi_i})^S\cap Z(\phi_{\alpha_i})^S=\emptyset
\end{align}
in the following.
Since we have $Z(\psi_{\varpi_i})=Y\cap \Omega_{s_i}$ by Lemma~\ref{lem: zero=Schubert divisor}, it follows that
\begin{align*}
 Z(\psi_{\varpi_i})^S = Y^S\cap \Omega_{s_i}^S = \{w_J \in W \mid J\subseteq I, \ w_J\ge s_i\} ,
\end{align*}
where we used the fact $\Omega_{s_i}^S = \Omega_{s_i}^T$ which follows from $(G/B)^S = (G/B)^T$.
By the definition of the Bruhat order, this implies that
\begin{align}\label{eq: fixed point 1}
 Z(\psi_{\varpi_i})^S = \{w_J \in W \mid i\in J\subseteq I\}.
\end{align}
By Lemma~\ref{lem: fixed points of small Peterson}, we also have
\begin{align}\label{eq: fixed point 2}
 Z(\phi_{\alpha_i})^S
 = \{w_J \in W \mid i\notin J\subseteq I\}.
\end{align}
Therefore, we obtain \eqref{eq: fixed point intersection empty}, as claimed above.

We now assume that $Z(\psi_{\varpi_i}) \cap Z(\phi_{\alpha_i})\ne \emptyset$, and we deduced a contradiction. This assumption means that the intersection $Z(\psi_{\varpi_i}) \cap Z(\phi_{\alpha_i})$ is a non-empty closed subset of $Y$ preserved under the $S$-action. Since $S(\cong \C^{\times})$ is a torus, it follows that it must contain an $S$-fixed point (e.g.\ \cite[Sect.\ 21.2]{Humphreys75}). Hence, we have 
\begin{align*}
 Z(\psi_{\varpi_i})^S\cap Z(\phi_{\alpha_i})^S  = (Z(\psi_{\varpi_i}) \cap Z(\phi_{\alpha_i}))^S \ne\emptyset
\end{align*}
which contradicts to \eqref{eq: fixed point intersection empty}.
\end{proof}

\vspace{10pt}
Recalling \eqref{eq: psi i by ()} and \eqref{eq: desired section 1}, it is clear that Proposition~\ref{prop: empty intersection} implies Proposition~\ref{prop: non-simultaneous goal} stated in the beginning of this subsection.

\vspace{20pt}

%%%%%%%%%%%%%%%%%%%
%%%%%%%%%%%%%%%%%%%
\section{A morphism from $Y$ to $X(\fan)$}\label{sect: the morphism}
%%%%%%%%%%%%%%%%%%%
%%%%%%%%%%%%%%%%%%%
In this section, we construct a morphism $\Psi \colon Y \rightarrow X(\fan)$ from the Peterson variety $Y$ to the toric orbifold $X(\fan)=(\C^{2I}-\EL)/T$, and we prove that it induces a ring isomorphism $\Psi^* \colon H^*(X(\fan);\Q) \rightarrow H^*(Y;\Q)$ on the cohomology rings with $\Q$ coefficients.
After that, we describe the correspondence on the degree $2$ parts explicitly, and we explain the compatibility of $\Psi^*$ with the known presentations for the rings $H^*(X(\fan);\Q)$ and $H^*(Y;\Q)$.

%%%%%%%%%%%%%%%%%%%
\subsection{Construction of the morphism $\Psi$}
%%%%%%%%%%%%%%%%%%%
Recall from \eqref{eq: diagram of PY} that $P_Y\subseteq G$ is the restriction of the principal $B$-bundle $G\rightarrow G/B$ over the Peterson variety $Y$.
From Sections~\ref{subsec: varpi} and \ref{subsec: alphai}, we have two kinds of functions 
\begin{align*}
 &\Delta_{\varpi_i} \colon P_Y \rightarrow \C
 \quad ; \quad
 g \mapsto (gv_{\varpi_i})_{\varpi_i},\\
 &\hspace{6.2pt}\qp{i} \colon P_Y \rightarrow \C
 \quad ; \quad
 g \mapsto -(\text{Ad}_{g^{-1}}\nil)_{-\alpha_i},
\end{align*}
for $i\in I$ which produces the sections $\psi_{\varpi_i}$ and $\phi_{\alpha_i}$ of the line bundles $\LB_{\varpi_i}|_Y$ and $\LB_{\alpha_i}|_Y$, respectively.
Using these functions, we obtain a morphism $\widetilde{\Psi} \colon  P_Y \rightarrow \C^{2I}$ given by
\begin{align*}
 \widetilde{\Psi}(g)= (\Delta_{\varpi_1}(g),\ldots,\Delta_{\varpi_{\rkg}}(g); \qp{1}(g), \ldots, \qp{\rkg}(g))
\end{align*}
for $g\in P_Y$, where we write $I=\{1,2,\ldots,\rkg\}$.
Since $g\in P_Y$ implies that $gB\in Y$, 
Proposition~\ref{prop: non-simultaneous goal} implies that $\Delta_{\varpi_i}(g)$ and $\qp{i}(g)$ in the right hand side cannot be zero simultaneously for each $i\in I$ so that the image of $\widetilde{\Psi}$ lies in $\C^{2I}-\EL$ by \eqref{eq: subtraction}. Namely, we have 
\begin{align}\label{eq: Psi tilde}
 \widetilde{\Psi} \colon P_Y \rightarrow \C^{2I}-\EL.
\end{align}
Recall that $P_Y$ admits the multiplication of $B$ from the right, and $\C^{2I}-\EL$ admits the $T$-action defined in \eqref{eq: def of T-action on C2r}.
With these in mind, Lemma~\ref{lem: double equivalence}~(1) and Lemma~\ref{lem: single equivalence}~(1) now imply that the map \eqref{eq: Psi tilde} is equivariant with respect to the projection homomorphism $B\twoheadrightarrow T$.
Composing $\widetilde{\Psi}$ with the quotient morphism $\C^{2I}-\EL\rightarrow (\C^{2I}-\EL)/T=X(\fan)$, we thus obtain a morphism $P_Y \rightarrow X(\fan)$ which is $B$-invariant. Since the projection map $P_Y\rightarrow Y$ is $B$-equivariantly locally trivial, it follows that we obtain a morphism
\begin{align*}
 \Psi \colon Y \rightarrow X(\fan)
\end{align*}
given by 
\begin{align*}
 \Psi(gB)= [\Delta_{\varpi_1}(g),\ldots,\Delta_{\varpi_{\rkg}}(g); \qp{1}(g), \ldots, \qp{\rkg}(g)]
\end{align*}
for $gB\in Y$.

\vspace{10pt}

%%%%%%%%%%%%%%%%%%%
\subsection{Basic properties of $\Psi$}
%%%%%%%%%%%%%%%%%%%
Recall that there is a $1$-dimensional complex torus $S\subseteq T$ which acts on the Peterson variety $Y$. Recall also that $T/Z$-action on the toric variety $X(\fan)$ given in \eqref{eq: toric action on Xsigma} is the canonical torus action (See Proposition~\ref{prop: canonical torus action}). To relate these two actions, let us consider the following homomorphism.
\begin{align}\label{eq: equivariance 50'}
 \psi\colon S \rightarrow T/Z
 \quad ; \quad
 t\mapsto [t^{-2}].
\end{align}
Then the following holds.

\begin{proposition}\label{prop: equivariance}
The morphism $\Psi \colon Y \rightarrow X(\fan)$ is equivariant with respect to the homomorphism \eqref{eq: equivariance 50'}.
\end{proposition}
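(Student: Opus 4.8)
The plan is to verify the equivariance directly on the level of the $B$-bundle $P_Y$ and then descend. Concretely, I want to compare $\Psi(tgB)$ with $[t^{-2}]\cdot\Psi(gB)$ for $t\in S$ and $gB\in Y$. Since $\Psi(gB)$ is defined as the class of $\widetilde\Psi(g)=(\Delta_{\varpi_1}(g),\dots,\Delta_{\varpi_{\rkg}}(g);\qp{1}(g),\dots,\qp{\rkg}(g))$ in $(\C^{2I}-\EL)/T$, it suffices to track how each of the $2\rkg$ coordinate functions transforms under left multiplication by $t\in S$, and then to check that the resulting scaling agrees with the action \eqref{eq: toric action on Xsigma} of $[t^{-2}]\in T/Z$ modulo the $T$-action that is quotiented out.

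First I would compute the effect of left multiplication by $t\in S$ on the two families of functions. For the $\qp{i}$ part, Lemma~\ref{lem: single equivalence}~(2) gives directly $\qp{i}(tg)=\alpha_i(t^{-1})\qp{i}(g)$, recalling that on $S$ all the $\alpha_i$ coincide with $\alpha_S$. For the $\Delta_{\varpi_i}$ part, since $S\subseteq T\subseteq B^-$, Lemma~\ref{lem: double equivalence}~(2) yields $\Delta_{\varpi_i}(tg)=\varpi_i(t)\,\Delta_{\varpi_i}(g)$. Thus
\begin{align*}
 \widetilde\Psi(tg)=(\varpi_1(t)\Delta_{\varpi_1}(g),\dots;\,\alpha_1(t^{-1})\qp{1}(g),\dots),
\end{align*}
which is exactly $t^{-1}$ acting on $\widetilde\Psi(g)$ through the weights $(\varpi_1,\dots,\varpi_{\rkg},\alpha_1,\dots,\alpha_{\rkg})$ as in \eqref{eq: def of T-action on C2r}; more precisely, the $\varpi$-coordinates scale by $\varpi_i(t)$ and the $\alpha$-coordinates by $\alpha_i(t^{-1})$.

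The key step is then to reconcile this with the target action. In $X(\fan)=(\C^{2I}-\EL)/T$ the $T$-action defined in \eqref{eq: def of T-action on C2r} has been quotiented out, so I am free to multiply the vector $\widetilde\Psi(tg)$ by any element of $T$ without changing its class. I would multiply by $t\in T$ (acting through the weights $(\varpi_i,\alpha_i)$): this scales the $\varpi$-coordinates by a further factor of $\varpi_i(t)$ and the $\alpha$-coordinates by $\alpha_i(t)$, clearing the $\alpha_i(t^{-1})$ on the $y$-part and turning the $\varpi_i(t)$ factors into $\varpi_i(t^2)=\varpi_i(t)^2$. Hence in the quotient
\begin{align*}
 \Psi(tgB)=[\,\varpi_1(t)^2\Delta_{\varpi_1}(g),\dots;\,\qp{1}(g),\dots\,],
\end{align*}
leaving only a scaling of the $x$-coordinates. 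Comparing with the description \eqref{eq: toric action on XSigma} of the canonical $T/Z$-action, which scales the $y$-coordinates by $\alpha_i(t)$ and fixes the $x$-coordinates, I must confirm that $[\varpi_1(t)^2 x_1,\dots;y_1,\dots]$ equals $[t^{-2}]\cdot[x_1,\dots;y_1,\dots]$. This is again a statement to be checked by multiplying through by a suitable element of $T$ to move the scaling from the $x$-part to the $y$-part, using the relation $\alpha_i=\sum_j c_{i,j}\varpi_j$ between the two families of weights.

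The main obstacle is bookkeeping the two competing torus actions correctly: the $T$ being quotiented out acts through the weights $(\varpi_i,\alpha_i)$, whereas the residual canonical $T/Z$-action \eqref{eq: toric action on XSigma} acts only on the $y$-coordinates through $\alpha_i$, and Proposition~\ref{prop: canonical torus action} together with Lemma~\ref{lem: inverse homo} is what connects these two descriptions. I would therefore phrase the final comparison as an identity of classes in $(\C^{2I}-\EL)/T$, choosing the representative of $[t^{-2}]\in T/Z$ and the ambient $T$-adjustment so that both sides reduce to the same vector; the factor $-2$ in \eqref{eq: equivariance 50'} arises precisely from the doubling $\varpi_i(t)\mapsto\varpi_i(t)^2$ produced when clearing the $\alpha$-scaling, combined with the sign inherited from $\alpha_i(t^{-1})$ in Lemma~\ref{lem: single equivalence}~(2). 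Everything else is a routine diagram-chase through the splitting \eqref{eq: TZ to C} and the quotient identification, so no serious difficulty is expected beyond keeping the weights straight.
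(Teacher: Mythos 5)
Your proposal is correct and follows essentially the same route as the paper's proof: compute $\Psi(tgB)$ coordinatewise via Lemma~\ref{lem: double equivalence}~(2) and Lemma~\ref{lem: single equivalence}~(2), then use the freedom in the $T$-quotient to renormalize the representative and compare with the canonical $T/Z$-action \eqref{eq: toric action on XSigma}. The only difference is cosmetic: the paper acts by $t^{-1}$, landing directly on $[\Delta_{\varpi_1}(g),\ldots;\alpha_1(t^{-2})\qp{1}(g),\ldots]$, whereas you act by $t$ and then need one further adjustment --- acting by $t^{-2}\in T$ on your representative $(\varpi_i(t)^2\Delta_{\varpi_i}(g);\qp{i}(g))$ finishes the comparison at once, with no need for the relation $\alpha_i=\sum_{j}c_{i,j}\varpi_j$ that you invoke.
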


\begin{proof}
For $t\in S\subseteq T$ and $gB\in Y$, the image $\Psi(tgB)\in X(\fan)$ is equal to
\begin{align*}
 [\varpi_1(t)\Delta_{\varpi_1}(g),\ldots,\varpi_{\rkg}(t)\Delta_{\varpi_{\rkg}}(g); \alpha_1(t^{-1})\qp{1}(g), \ldots, \alpha_{\rkg}(t^{-1})\qp{\rkg}(g)] 
\end{align*}
by Lemma~\ref{lem: double equivalence} (2) and Lemma~\ref{lem: single equivalence} (2).
Since the quotient $X(\fan)=(\C^{2I}-\EL)/T$ is taken by the $T$-action given \eqref{eq: def of T-action on C2r}, this is equal to
\begin{align*}
 [\Delta_{\varpi_1}(g),\ldots,\Delta_{\varpi_{\rkg}}(g); \alpha_1(t^{-2})\qp{1}(g), \ldots, \alpha_{\rkg}(t^{-2})\qp{\rkg}(g)] .
\end{align*}
Now, by \eqref{eq: toric action on XSigma}, this coincides with $[t^{-2}]\cdot \Psi(gB)\in X(\fan)$, as desired.
\end{proof}

\vspace{10pt}

The following property will be important when we study the homomorphism on the cohomology rings induced by $\Psi$.

\begin{proposition}\label{prop: surjective Psi}
The morphism $\Psi\colon Y\rightarrow X(\fan)$ is surjective. 
\end{proposition}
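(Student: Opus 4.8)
The plan is to prove surjectivity by exploiting the $S$-equivariance from Proposition~\ref{prop: equivariance} together with the fact that both sides are complete varieties carrying torus actions with known, matching fixed-point combinatorics. Since $Y$ is projective (hence complete) and $\Psi$ is a morphism of varieties, the image $\Psi(Y)\subseteq X(\fan)$ is a closed subvariety. Thus it suffices to show that $\Psi(Y)$ is all of $X(\fan)$, and the most efficient route is to argue that the image is dense. Because $\Psi$ is equivariant with respect to $\psi\colon S\to T/Z$, $t\mapsto[t^{-2}]$, the image $\Psi(Y)$ is a closed $\psi(S)$-invariant subset. The key structural input is that every $T/Z$-orbit closure in the toric variety $X(\fan)$ meets the image: more precisely, I would show that $\Psi(Y)$ contains the canonical torus $T/Z$ itself (the dense open orbit), from which closedness of $\Psi(Y)$ forces $\Psi(Y)=X(\fan)$.

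First I would locate a point of $Y$ mapping into the dense torus orbit. Recall that $X(\fan)=(\C^{2I}-\EL)/T$ and its dense $T/Z$-orbit consists of classes $[x_1,\dots,x_{\rkg};y_1,\dots,y_{\rkg}]$ with all $x_i\neq 0$ and all $y_i\neq 0$. So I would exhibit a point $gB\in Y$ with $\Delta_{\varpi_i}(g)\neq 0$ and $\qp{i}(g)\neq 0$ simultaneously for all $i\in I$. A natural candidate is the identity coset $eB$, or more robustly a generic point in the big cell $U^-B/B$; one computes $\Delta_{\varpi_i}$ using the corollary characterizing $\Delta_\lambda$ on $U^-B$, and one checks that $\qp{i}(g)=-(\mathrm{Ad}_{g^{-1}}\nil)_{-\alpha_i}$ is nonzero there. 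Such a point lands in the open dense orbit, so $\Psi(Y)$ meets it.

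Next, I would upgrade ``meets the open orbit'' to ``contains the open orbit.'' Here the $S$-equivariance is decisive but by itself insufficient, since $\psi(S)$ is only one-dimensional while $T/Z$ has rank $\rkg$. The cleaner argument is dimensional: $\dim_\C Y=\rkg=\dim_\C X(\fan)$, and $\Psi(Y)$ is an irreducible closed subvariety (as $Y$ is irreducible) whose dimension equals $\rkg$ provided $\Psi$ is generically finite. Since $\Psi(Y)$ is irreducible, closed, and contains a point of the open orbit, it suffices to verify $\dim\Psi(Y)=\rkg$; then $\Psi(Y)$ is an $\rkg$-dimensional irreducible closed subvariety of the $\rkg$-dimensional irreducible $X(\fan)$, forcing $\Psi(Y)=X(\fan)$. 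To get the dimension count I would show $\Psi$ does not contract $Y$, e.g.\ by checking that the fibers over the open orbit are finite, or alternatively by directly comparing $S$-fixed points: the preceding computations (equations \eqref{eq: fixed point 1} and \eqref{eq: fixed point 2}, and the fixed-point set \eqref{eq: S-fixed point of Y}) identify $Y^S=\{w_J\mid J\subseteq I\}$ with the $2^{\rkg}$ torus-fixed points of $X(\fan)$, which correspond to the $2^{\rkg}$ maximal cones $\sigma_J$ of Lemma~\ref{lem: complete 1}.

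The main obstacle I expect is ruling out that $\Psi$ collapses $Y$ onto a lower-dimensional subvariety of $X(\fan)$, i.e.\ establishing the dimension (or generic finiteness) claim rigorously. The most reliable way around this is to bypass dimension counting entirely and instead show directly that $\Psi$ is injective on $S$-fixed points and surjective onto $X(\fan)^{T/Z}$: under the identifications above, $\Psi$ sends $w_J\in Y^S$ to the fixed point of $X(\fan)$ indexed by $\sigma_J$, and this assignment $J\mapsto\sigma_J$ is a bijection. Since $X(\fan)$ is complete and $\Psi(Y)$ is a closed $\psi(S)$-invariant set containing all $T/Z$-fixed points, and since in a complete simplicial toric variety every orbit closure contains a torus-fixed point while the whole space is covered by the closures of the $T/Z$-orbits through those fixed points, a limit argument along generic one-parameter subgroups $\psi(S)$ (using that $\Psi$ already hits the open orbit) shows $\Psi(Y)$ cannot omit any orbit. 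Combining the open-orbit point from the big-cell computation with irreducibility and closedness of $\Psi(Y)$ then yields $\Psi(Y)=X(\fan)$, completing the proof.
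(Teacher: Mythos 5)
Your opening reduction is sound and matches the paper: $Y$ is projective, so $\Psi(Y)$ is closed in $X(\fan)$, and it suffices to prove that the image is dense. Your entry point into the dense orbit also needs a small repair but is fixable: the identity coset $eB$ does \emph{not} work, since $\qp{i}(e)=-(\mathrm{Ad}_{e}\nil)_{-\alpha_i}=0$ for every $i\in I$ (so $\Psi(eB)$ is the torus fixed point $p_{\emptyset}$, not a point of the dense orbit); however a generic point of $Y\cap U^-B/B$ does map into the dense orbit, because $\Delta_{\varpi_i}=1$ there and each $\qp{i}$ is not identically zero on the irreducible variety $Y$ by Lemma~\ref{lem: fixed points of small Peterson}. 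The genuine gap is the step you flag yourself and then never close: proving that $\Psi$ does not contract $Y$, i.e.\ that the irreducible closed image has dimension $\rkg$. None of your proposed substitutes can supply this. In particular, fixed-point bookkeeping cannot: an irreducible closed $\psi(S)$-invariant subvariety of a complete simplicial toric variety can contain \emph{all} torus fixed points, meet the dense orbit, and still be proper. For instance, for $G$ of type $A_1\times A_1\times A_1$ one has $X(\fan)\cong\P(1,2)^{3}$ with $\psi(S)$ acting essentially diagonally; a generic member of the linear system spanned by the $\psi(S)$-weight-homogeneous monomials of a weight shared by no ``corner'' monomial is an irreducible invariant hypersurface passing through all $8$ fixed points and through the dense orbit, yet it is $2$-dimensional. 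So ``irreducible $+$ closed $+$ $\psi(S)$-invariant $+$ contains $X(\fan)^{T/Z}$ $+$ meets the open orbit'' does not imply surjectivity.

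Your fallback, a ``limit argument along generic one-parameter subgroups $\psi(S)$,'' also fails for a concrete reason: $\psi(S)$ is a single fixed cocharacter (up to sign and a factor of $2$, it is $\sum_{i\in I}\cvarpi_i$), not a generic one. Since $\sum_{i\in I}\cvarpi_i$ lies in the interior of $\sigma_{\emptyset}$ and $-\sum_{i\in I}\cvarpi_i$ lies in the interior of $\sigma_{I}$, the closure of the $\psi(S)$-orbit of any point of the dense orbit meets the boundary only in the two fixed points $p_{\emptyset}$ and $p_{I}$; limits along $\psi(S)$ can never reach the remaining $2^{\rkg}-2$ fixed points or the intermediate orbits. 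What is needed is exactly the ingredient the paper uses and your proposal omits: a source of dominance. The paper restricts $\Psi$ to the big cell, where it becomes $uB\mapsto[1,\ldots,1;q'_1(u),\ldots,q'_{\rkg}(u)]$ with target the chart $X(\fan)_0\cong\C^{\rkg}$, and invokes Kostant's theorem (\cite[Theorem~27]{Kostant}, reproved as Lemma~B in the Appendix) that $q'_1,\ldots,q'_{\rkg}$ are algebraically independent in $\C[Y\cap U^-B/B]$; algebraic independence is precisely dominance of this affine restriction, hence of $\Psi$. Without this, or some equivalent dimension or fiber computation over the open orbit, your argument does not go through.
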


\begin{proof}
Since $Y$ is projective, the image $\Psi(Y)$ is a closed subset of $X(\fan)$.
Thus, it suffices to show that the morphism $\Psi\colon Y\rightarrow X(\fan)$ is dominant.
For this purpose, recall that $B^-$ is the opposite Borel subgroup of $G$ with respect to $B$ and that $U^-$ is its unipotent radical.
This allows us to consider an open affine subvariety $Y\cap U^-B/B$ in $Y$.
We also consider an open subvariety of $X(\fan)$ given by
\begin{align*}
 X(\fan)_0\coloneqq \{[x_1,\ldots,x_{\rkg};y_1,\ldots,y_{\rkg}]\in X(\fan) \mid x_i\ne0\ (i\in I)\}.
\end{align*}
By the surjectivity of the isomorphism
\begin{align*}
 T \rightarrow (\C^{\times})^{\rkg}
 \quad ; \quad
 t\mapsto (\varpi_1(t),\ldots,\varpi_{\rkg}(t)), 
\end{align*}
we can write
\begin{align*}
 X(\fan)_0= \{[1,\ldots,1;y_1,\ldots,y_{\rkg}]\in X(\fan) \mid y_i\in \C\ (i\in I)\}.
\end{align*}
For an element $uB\in Y\cap U^-B/B$ with $u\in U^-$, we have
\begin{align*}
 \Delta_{\varpi_i}(uB) = (uv_{\varpi_i})_{\varpi_i} = 1
 \quad \text{for $i\in I$}
\end{align*}
since $(uv_{\varpi_i})_{\varpi_i}$ is the coefficient of the highest weight vector $v_{\varpi_i}$ for the weight decomposition of $uv_{\varpi_i}\in V(\varpi_i)$.
Thus, the morphism $\Psi\colon Y\rightarrow X(\fan)$ restricts to 
\begin{align*}
 \Psi_{0} \colon Y\cap U^-B/B \rightarrow X(\fan)_0
\end{align*}
which sends 
\begin{align}\label{eq: open restriction 2}
 uB^- \ (u\in U^-) \mapsto [1,\ldots,1;\qp{1}(u),\ldots,\qp{\rkg}(u)].
\end{align}
To show that the morphism $\Psi\colon Y\rightarrow X(\fan)$ is dominant, 
it suffices to show that $\Psi_0$ is dominant since $X(\fan)_0$ is a non-empty open subset of the irreducible variety $X(\fan)$.
Let us prove that $\Psi_0$ is dominant in what follows.
For $i\in I$, we denote by $q_i$ the $i$-th function in the last $\rkg$ components of $\Psi_0$:
\begin{align*}
 q'_i \colon Y\cap U^-B/B \rightarrow \C
 \quad ; \quad
 uB\ (u\in U^-) \mapsto \qp{i}(u)=-(\text{Ad}_{u^{-1}}\nil)_{-\alpha_i}.
\end{align*}
In \cite[Theorem~27]{Kostant}, Kostant proved that $q'_1,\ldots,q'_{\rkg}$ are algebraically independent\footnote{For the completeness of the paper, we provide an alternative proof of his result in Appendix (Lemma~B). We note that our $q'_i$ corresponds to his $q_{i^*}$, where $i\mapsto i^*$ is the automorphism of $I$ induced by $w_0$. } in the coordinate ring $\C[Y\cap U^-B/B]$, where we regard $Y\cap U^-B/B$ as an affine variety. 
Let $\C[x_1,\ldots,x_{\rkg}]$ be the polynomial ring with indeterminates $x_1,\ldots,x_{\rkg}$. Kostant's result means that the $\C$-algebra homomorphism
\begin{align*}
 \C[x_1,\ldots,x_{\rkg}] \rightarrow \C[Y\cap U^-B/B]
\end{align*}
sending $x_i$ to $q'_i$ $(i\in I)$ is injective since a non-zero element of the kernel provides a non-trivial algebraic relation between $q'_1,\ldots,q'_{\rkg}$.
This implies that the morphism
\begin{align*}
 Y\cap U^-B/B \rightarrow \C^{\rkg}
 \quad ; \quad
 uB\ (u\in U^-) \mapsto (q'_{1}(u).\ldots,q'_{\rkg}(u))
\end{align*}
is dominant (\cite[Chap.~II, Exercise~2.18(b)]{Hartshorne}). Since $\Psi_0\colon Y\cap U^-B/B \rightarrow X(\fan)_0$ is given by \eqref{eq: open restriction 2}, we conclude that $\Psi_0$ is dominant as well. This completes the proof.
\end{proof}

\vspace{10pt}

\begin{remark}
{\rm
The functions $q'_1,\ldots,q'_{\rkg}$ play the role of quantum parameters in Peterson's description of the quantum cohomology ring of the Langlands dual flag variety $G^{\vee}/B^{\vee}$ (e.g.\ \cite[Sect.~3.3.7]{Rietsch08}).
We note that Kostant's formula for $q'_i$ given in \cite[Theorem~27]{Kostant} is naturally encoded in the morphism $\Psi$ in the following sense.
We take a representative $\rep{w}_0\in N(T)$ of the longest element $w_0\in W$ so that $\f{i}=-\text{Ad}_{\rep{w}_0^{-1}}\e{i^*}$ holds for $i\in I$ (cf.\ \cite[Equation~(13)]{Kostant}), where $i\mapsto i^*$ is the automorphism of $I$ induced by $w_0$.
An arbitrary element $gB\in Y\cap U^-B\cap U\rep{w}_0 B$ can be written as
\begin{align}\label{eq: identity and longest}
 gB = uB = \widetilde{u}\rep{w}_0 B
\end{align}
for some $u\in U^-$ and $\widetilde{u}\in U$. 
The equality $\Psi(uB) =\Psi(\widetilde{u}\rep{w}_0 B)$ implies that
\begin{align*}
 [1,\ldots,1;q'_{1}(u),\ldots,q'_{\rkg}(u)] = [\Delta_{\varpi_1}(\widetilde{u}\rep{w}_0),\ldots,\Delta_{\varpi_{\rkg}}(\widetilde{u}\rep{w}_0);1,\ldots,1] 
 \quad \text{in $X(\fan)$}
\end{align*}
since $\Delta_{\varpi_i}(u)=(uv_{\varpi_i})_{\varpi_i}=1$ and $\qp{i}(\widetilde{u}\rep{w}_0)=-(\text{Ad}_{(\widetilde{u}\rep{w}_0)^{-1}}\nil)_{-\alpha_i}=1$ for $i\in I$, where the latter equality follows from the choice of the representative $\rep{w}_0$ explained above.
Since we have $X(\fan)=(\C^{2I}-\EL)/T$, this means that there exists $t\in T$ such that 
\begin{align}\label{eq: identity and longest 2}
 \alpha_i(t)=q'_{i}(u)
 \quad \text{and} \quad
 \varpi_i(t)=\Delta_{\varpi_i}(\widetilde{u}\rep{w}_0)^{-1} 
\end{align}
for $i\in I$. 
We may write $\Delta'_{i}(u)\coloneqq \Delta_{\varpi_i}(\widetilde{u}\rep{w}_0)$ for $i\in I$ since $\widetilde{u}$ is uniquely determined by $u$ in \eqref{eq: identity and longest}.
Let $C\coloneqq (c_{i,j})_{i,j\in I}$ be the Cartan matrix of $I$. 
Then we have $\alpha_i(t) = \prod_{j\in I} \varpi_j(t)^{c_{i,j}}$
which becomes Kostant's formula (\cite[Theorem~27]{Kostant}) for $q'_{i}$ by \eqref{eq: identity and longest 2}:
\begin{align*}
 q'_{i}(u) = \prod_{j\in I} \Delta'_{j}(u)^{-c_{i,j}}
 \qquad (i\in I)
\end{align*}
for $u\in U^-$ such that $uB\in Y\cap U^-B\cap U\rep{w}_0 B$.
}
\end{remark}

%%%%%%%%%%%%%%%%%%%
\subsection{Induced ring homomorphism}
%%%%%%%%%%%%%%%%%%%
The morphism $\Psi \colon Y \rightarrow X(\fan)$ induces a ring homomorphism
\begin{align*}
 \Psi^* \colon H^*(X(\fan);\Q) \rightarrow H^*(Y;\Q)
\end{align*}
on the singular cohomology rings with $\Q$ coefficients considered with the classical topology of $X(\fan)$ and $Y$.
The aim of this subsection is to prove the following.
\begin{theorem}\label{thm: main}
The induced map
\begin{align*}
 \Psi^* \colon H^*(X(\fan);\Q) \rightarrow H^*(Y;\Q)
\end{align*}
is a ring isomorphism.
\end{theorem}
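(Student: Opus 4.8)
The plan is to show that $\Psi^*$ is a surjective homomorphism of graded rings and then to promote surjectivity to bijectivity by comparing dimensions degree by degree. Two structural inputs drive the argument. First, since $X(\fan)$ is a simplicial projective toric variety, $H^*(X(\fan);\Q)$ is generated as a $\Q$-algebra by the degree-$2$ classes $[\Diva{i}],[\Divw{i}]$ $(i\in I)$ of the torus-invariant divisors (\cite[Section~12.4]{CLS}). Second, by Harada--Horiguchi--Masuda (\cite{ha-ho-ma}) the ring $H^*(Y;\Q)$ is likewise generated in degree $2$, and the classes $c_1(L_{\varpi_i}|_Y)$ $(i\in I)$ span $H^2(Y;\Q)$. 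Granting these, the theorem reduces to computing $\Psi^*$ on $H^2$ and checking surjectivity there.

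First I would compute $\Psi^*$ on the degree-$2$ generators, using that $\Psi$ descends from the $B\twoheadrightarrow T$-equivariant morphism $\widetilde{\Psi}\colon P_Y\to\C^{2I}-\EL$ whose components are $\Delta_{\varpi_i}$ and $\qp{i}$. In the Cox description, the homogeneous coordinate $x_i$ is a section of $\mathcal O_{X(\fan)}(\Diva{i})$ with zero divisor $\Diva{i}$; its pullback $\Psi^*x_i=\Delta_{\varpi_i}$ is precisely the section $\psi_{\varpi_i}$ of $L_{\varpi_i}|_Y$ (equivalently, the $T$-weight of $x_i$ is $\varpi_i$ by \eqref{eq: def of T-action on C2r}, so $\Psi^*\mathcal O(\Diva{i})\cong L_{\varpi_i}|_Y$). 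Since the zero divisor of $\psi_{\varpi_i}$ represents $c_1(L_{\varpi_i}|_Y)$, we get $\Psi^*[\Diva{i}]=c_1(L_{\varpi_i}|_Y)$; the same reasoning with $\Psi^*y_i=\qp{i}=\phi_{\alpha_i}$ gives $\Psi^*[\Divw{i}]=c_1(L_{\alpha_i}|_Y)$. Summarizing,
\begin{align*}
 \Psi^*[\Diva{i}]=c_1(L_{\varpi_i}|_Y) \quad\text{and}\quad \Psi^*[\Divw{i}]=c_1(L_{\alpha_i}|_Y) \qquad (i\in I).
\end{align*}

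Because $\alpha_i=\sum_{j\in I}c_{i,j}\varpi_j$ with $c_{i,j}=\langle\alpha_i,\alpha^{\vee}_j\rangle$, we have $c_1(L_{\alpha_i}|_Y)=\sum_{j\in I}c_{i,j}\,c_1(L_{\varpi_j}|_Y)$, which matches the toric linear relation $[\Divw{i}]=\sum_{j}c_{i,j}[\Diva{j}]$ coming from the character $\alpha_i\in\RL$; in particular $\Psi^*$ respects the relations among divisor classes. Hence $\Psi^*(H^2(X(\fan);\Q))$ is exactly the span of the classes $c_1(L_{\varpi_i}|_Y)$, which equals $H^2(Y;\Q)$ by the second structural fact, so $\Psi^*$ is surjective in degree $2$. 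Since $H^*(X(\fan);\Q)$ is generated in degree $2$, the image of the ring map $\Psi^*$ is the subalgebra of $H^*(Y;\Q)$ generated by $H^2(Y;\Q)$, and as $H^*(Y;\Q)$ is itself generated in degree $2$ this subalgebra is all of $H^*(Y;\Q)$; thus $\Psi^*$ is surjective in every degree. Finally, the abstract graded isomorphism \eqref{eq: intro 10} gives $\dim_\Q H^k(X(\fan);\Q)=\dim_\Q H^k(Y;\Q)$ for all $k$, and a surjection of finite-dimensional $\Q$-vector spaces of equal dimension is an isomorphism; therefore $\Psi^*$ is a ring isomorphism.

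The step I expect to be the main obstacle is the degree-$2$ identification in the second paragraph: one must carefully match the line bundle $\mathcal O_{X(\fan)}(\Diva{i})$ produced by the Cox quotient with the equivariant bundle $L_{\varpi_i}|_Y$ (and likewise $\mathcal O_{X(\fan)}(\Divw{i})$ with $L_{\alpha_i}|_Y$) and verify the resulting formula for $\Psi^*$ on divisor classes, bearing in mind that $\Diva{i}$ is only $\Q$-Cartier so that these identities live in rational cohomology. I prefer the dimension-count route to bijectivity over trying to deduce injectivity of $\Psi^*$ directly from the surjectivity of $\Psi$ (Proposition~\ref{prop: surjective Psi}): the latter would invoke a projection-formula argument requiring rational Poincar\'e duality on $Y$, which is delicate since $Y$ is singular, whereas \eqref{eq: intro 10} supplies the needed dimensions at once.
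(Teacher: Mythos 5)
Your overall architecture is sound and genuinely different from the paper's: you go for \emph{surjectivity} of $\Psi^*$ (degree-$2$ surjectivity plus generation of both rings in degree $2$) and then conclude by a graded dimension count, whereas the paper proves \emph{injectivity} of $\Psi^*$ (from surjectivity of $\Psi$, Proposition~\ref{prop: surjective Psi}, via the degree $d>0$, the projection formula, and Poincar\'e duality \eqref{eq: toric Poincare duality} on the toric side) and then concludes by a total dimension count ($2^{\rkg}$ maximal cones versus $2^{\rkg}$ fixed points). However, your proof has a genuine gap exactly at its crux: the identities $\Psi^*(\PD{\Diva{i}})=c_1(\LB_{\varpi_i})$ and $\Psi^*(\PD{\Divw{i}})=c_1(\LB_{\alpha_i})$ are asserted, not proven. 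The justification ``the $T$-weight of $x_i$ is $\varpi_i$, so $\Psi^*\mathcal{O}(\Diva{i})\cong \LB_{\varpi_i}|_Y$'' is not an argument: $\Diva{i}$ is in general only a Weil, $\Q$-Cartier divisor, so $\mathcal{O}_{X(\fan)}(\Diva{i})$ is merely a reflexive sheaf, its pullback along $\Psi$ is not a priori meaningful, and the homogeneous coordinate $x_i$ is not a function on $X(\fan)$, so ``$\Psi^*x_i=\Delta_{\varpi_i}$'' presupposes the very sheaf identification you are trying to establish. To make this rigorous you would have to pass to a Cartier multiple $m\Diva{i}$, identify its pullback with $\LB_{\varpi_i}^{\otimes m}|_Y$ by a descent argument through \emph{both} torus quotients (the trivial bundle on $\C^{2I}-\EL$ with $T$-linearization $m\varpi_i$ versus the trivial bundle on $P_Y$ with $B$-linearization $m\varpi_i$), check the sign and action conventions, and only then divide by $m$ in $H^2(X(\fan);\Q)$. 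This is the real mathematical content, and it is telling that the paper obtains these degree-$2$ formulas only \emph{after} Theorem~\ref{thm: main} (Corollaries~\ref{cor: = 1 lemma} and \ref{cor: = 1 lemma 2}), using the theorem itself together with Cohen--Macaulayness of $Y$, the cycle-pushforward Lemma~C, Poincar\'e duality for $Y$ (Corollary~\ref{cor: PD for Y}), and an equivariant localization at two fixed points to pin down the constant $m_i=1$ in Lemma~\ref{lem: proportional lemma}. As written, your proof rests on a step you yourself flag as ``the main obstacle'' and leave unresolved.

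Two further remarks. First, your stated reason for avoiding the paper's route is based on a misreading: the injectivity argument needs rational Poincar\'e duality only for $X(\fan)$, which holds because $X(\fan)$ is a simplicial projective toric variety; duality for the singular variety $Y$ is never an input --- on the contrary, it is a \emph{consequence} of the theorem (Corollary~\ref{cor: PD for Y}). So the ``delicate'' issue you wished to sidestep does not arise in the paper's proof, while the issue you do face (pulling back non-Cartier divisor classes) is at least as delicate. Second, your use of \eqref{eq: intro 10} and of the Harada--Horiguchi--Masuda presentation is legitimate (these are known independently, so there is no circularity), but note that the paper's proof needs neither: it only counts total dimensions. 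If you can fill the descent argument above, your route does buy something --- it makes the degree-$2$ correspondence the \emph{input} rather than a corollary, and renders Section~6.4 of the paper nearly immediate --- but in its current form the central step is missing.
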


\begin{proof}
We first consider the induced homomorphism
\begin{align*}
 \Psi_* \colon H_*(Y;\Q) \rightarrow H_*(X(\fan);\Q)
\end{align*}
on the homology groups.
Recall that we have $\dim_{\C} Y=\dim_{\C} X(\fan)=\rkg$, where $\rkg$ is the rank of $G$.
Having Proposition~\ref{prop: surjective Psi} in mind, let $d\in\Z_{>0}$ be the degree of the map $\Psi \colon Y\rightarrow X(\fan)$, i.e., 
\begin{align}\label{eq: degree of Psi}
 \Psi_*\cycle{Y} = d\cycle{X(\fan)}
 \qquad \text{in $H_{2\rkg}(X(\fan);\Q)$}
\end{align} 
(\cite[Lemma~19.1.2]{Ful98}),
where $\cycle{Y}\in H_{2\rkg}(Y;\Q)$ and $\cycle{X(\fan)}\in H_{2\rkg}(X(\fan);\Q)$ denote the fundamental cycles of $Y$ and $X(\fan)$, respectively.
Let $0\le k\le 2\rkg$ be an integer.
By the projection formula for the cap product, we have
\begin{align*}
 \Psi_*(\Psi^*(\eta)\cap \cycle{Y}) = \eta\cap \Psi_*\cycle{Y}
 \qquad \text{for $\eta\in H^{k}(X(\fan);\Q)$}.
\end{align*} 
Namely, we have the following commutative diagram
\begin{align}\label{eq: projection formula diagram}
\begin{matrix}
\xymatrix{
H^{k}(X(\fan);\Q) \ar[d]_{\cap\hspace{1pt}\Psi_*\cycle{Y}}\ar[r]^{\Psi^*} & H^{k}(Y;\Q) \ar[d]^{\cap\cycle{Y}} \\
H_{2\rkg-k}(X(\fan);\Q) & H_{2\rkg-k}(Y;\Q) \ar[l]_{\ \ \ \Psi_*},
}
\end{matrix}
\end{align}
where we have $\Psi_*\cycle{Y}=d\cycle{X}$ with $d>0$ by the definition of $d$ above.
Since $X(\fan)$ is a simplicial projective toric variety, the Poincar\'{e} duality holds in $\Q$ coefficients. That is, the cap product map
\begin{align}\label{eq: toric Poincare duality}
 H^{k}(X(\fan);\Q) \rightarrow H_{2\rkg-k}(X(\fan);\Q)
 \quad ; \quad 
 \eta \mapsto \eta\cap\cycle{X(\fan)}
\end{align} 
is an isomorphism (\cite[Sect.~12.4]{CLS}). Hence, $d>0$ implies that the left vertical map in \eqref{eq: projection formula diagram} is an isomorphism.
Therefore, the commutativity of \eqref{eq: projection formula diagram} implies that the induced map
\begin{align*}
 \Psi^*\colon H^*(X(\fan);\Q) \rightarrow H^*(Y;\Q)
\end{align*} 
is injective.

To prove that $\Psi^*$ is an isomorphism, we compare the dimensions of $H^*(X(\fan);\Q)$ and $H^*(Y;\Q)$ as vector spaces over $\Q$.
Since $X(\fan)$ is a simplicial projective toric variety, $\dim_{\Q} H_*(X(\fan);\Q)$ is equal to the number of full-dimensional cones in $\fan$ (\cite[Theorem~12.3.9 and Theorem~12.3.11]{CLS}).
Hence, by \eqref{eq: dim of maximal cone} and Lemma~\ref{lem: complete 1}, we obtain
\begin{align*}
 \dim_{\Q} H_*(X(\fan);\Q) 
 =|\{J\mid J\subseteq I\}|
 = 2^{\rkg}.
\end{align*} 
Also, \cite[Corollary~4.13]{pre13} and \cite[Proposition~5.2]{ha-ty17} imply that $\dim_{\Q} H^*(Y;\Q)$ is equal to the number of $T$-fixed points of $G/B$ which are contained in $Y$. This number is the same as the cardinality of the fixed point set $Y^S$ (see \eqref{eq: S-fixed point 10}) which is equal to $2^{\rkg}$ by \cite[Proposition~5.8]{ha-ty17}. Hence,
we obtain that
\begin{align*}
 \dim_{\Q} H_*(X(\fan);\Q) =2^{\rkg} = \dim_{\Q} H^*(Y;\Q).
\end{align*} 
Combining this with the injectivity of $\Psi^*\colon H^*(X(\fan);\Q) \rightarrow H^*(Y;\Q)$ proved above, we conclude that $\Psi^*$ is an isomorphism.
\end{proof}

\vspace{5pt}

\begin{remark}\label{rem: generalization}
{\rm 
The Peterson variety is an example of regular Hessenberg varieties.
B$\breve{\text{a}}$libanu-Crooks (\cite{ba-cr}) proved that
their cohomology rings $($with $\Q$ coefficients$)$ are isomorphic to certain subrings of the cohomology ring of the corresponding regular semisimple Hessenberg variety (cf. \cite{AHMMS,AHHM}).
For a particular choice of a Hessenberg space, Horiguchi-Masuda-Shareshian-Song proved that their cohomology rings are also isomorphic to that of certain toric orbifolds in classical Lie types (\cite[Equation~(1.2)]{ho-ma-sh-so}).
}
\end{remark}

\vspace{10pt}

The following corollary is equivalent to the fact that the cohomology ring $H^*(Y;\Q)$ is a Poincar\'{e} duality algebra which is known from \cite[Corollary~1.2]{AHMMS} and \cite[Theorem~D]{ba-cr}. We give an alternative proof of this fact here.

\begin{corollary}\label{cor: PD for Y}
For each integer $0\le k\le 2\rkg$, the cap product map
\begin{align*}
 H^{k}(Y;\Q) \rightarrow H_{2\rkg-k}(Y;\Q)
 \quad ; \quad 
 \xi \mapsto \xi\cap\cycle{Y}
\end{align*} 
is an isomorphism, where $\cycle{Y}\in H_{2\rkg}(Y;\Q)$ is the fundamental cycle of $Y$.
\end{corollary}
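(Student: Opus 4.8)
The plan is to derive the corollary directly from the commutative diagram~\eqref{eq: projection formula diagram} already established in the proof of Theorem~\ref{thm: main}, now that $\Psi^*$ has been shown to be an isomorphism. Fix an integer $0\le k\le 2\rkg$. In that diagram the left vertical map is the cap product with $\Psi_*\cycle{Y}=d\cycle{X(\fan)}$, which is $d$ times the Poincar\'e duality isomorphism~\eqref{eq: toric Poincare duality} of the simplicial projective toric variety $X(\fan)$; since $d>0$ and we work over $\Q$, this left vertical map is an isomorphism, exactly as recorded in the proof of Theorem~\ref{thm: main}.

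First I would read off injectivity of the cap product map $\cap\cycle{Y}\colon H^{k}(Y;\Q)\to H_{2\rkg-k}(Y;\Q)$. By the commutativity of~\eqref{eq: projection formula diagram}, the composite $\Psi_*\circ(\cap\cycle{Y})\circ\Psi^*$ equals the left vertical map, which is an isomorphism. As $\Psi^*$ is an isomorphism by Theorem~\ref{thm: main}, the composite $\Psi_*\circ(\cap\cycle{Y})$ is an isomorphism; in particular $\cap\cycle{Y}$ is injective for every $k$.

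It remains to upgrade injectivity to bijectivity, and here the key input is a dimension count rather than the diagram itself. Because we work over the field $\Q$, the universal coefficient theorem gives $\dim_{\Q}H_{2\rkg-k}(Y;\Q)=\dim_{\Q}H^{2\rkg-k}(Y;\Q)$, and all these dimensions are finite (indeed they sum to $2^{\rkg}$, as computed in the proof of Theorem~\ref{thm: main}). Injectivity of $\cap\cycle{Y}$ then yields $\dim_{\Q}H^{k}(Y;\Q)\le\dim_{\Q}H^{2\rkg-k}(Y;\Q)$; applying the same inequality with $k$ replaced by $2\rkg-k$ and combining the two gives equality of these dimensions. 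Thus $\cap\cycle{Y}$ is an injective $\Q$-linear map between finite-dimensional spaces of equal dimension, hence an isomorphism.

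The main obstacle I anticipate is precisely this last step: the diagram transports surjectivity onto $\Psi_*$, not onto the vertical cap product map for $Y$, so one cannot conclude Poincar\'e duality for $Y$ formally from that of $X(\fan)$ together with the isomorphism $\Psi^*$. The symmetry argument via the universal coefficient theorem, exchanging the degrees $k$ and $2\rkg-k$, is what closes this gap; everything else is a direct reading of~\eqref{eq: projection formula diagram}.
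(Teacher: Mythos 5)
Your argument is correct, but it closes the proof by a different mechanism than the paper. The paper's proof notes that, with $\Q$ coefficients, Theorem~\ref{thm: main} yields that \emph{both} $\Psi^*\colon H^*(X(\fan);\Q)\to H^*(Y;\Q)$ and $\Psi_*\colon H_*(Y;\Q)\to H_*(X(\fan);\Q)$ are isomorphisms (over a field, $\Psi^*$ is the vector-space dual of $\Psi_*$, so either one being an isomorphism forces the other to be), and then simply reads off from the commutativity of \eqref{eq: projection formula diagram} and \eqref{eq: degree of Psi} that
\begin{align*}
 (\;\cdot\;\cap\cycle{Y}) \;=\; \Psi_*^{-1}\circ\bigl(\;\cdot\;\cap\,\Psi_*\cycle{Y}\bigr)\circ(\Psi^*)^{-1}
\end{align*}
is a composition of three isomorphisms. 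For this reason your closing remark --- that one \emph{cannot} conclude Poincar\'e duality for $Y$ formally from the diagram together with the isomorphism $\Psi^*$ --- is not quite accurate: the surjectivity you were missing is exactly the injectivity of $\Psi_*$, which comes for free over $\Q$ once $\Psi^*$ is known to be an isomorphism. Your substitute for this step (injectivity of $\cap\cycle{Y}$ in every degree, then the universal coefficient theorem and the symmetry exchanging $k$ and $2\rkg-k$ to force $\dim_{\Q}H^{k}(Y;\Q)=\dim_{\Q}H_{2\rkg-k}(Y;\Q)$) is a valid alternative: it never needs $\Psi_*$ to be injective, at the modest cost of a finite-dimensionality check and a dimension count. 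Both proofs are sound; the paper's is shorter because it exploits the homology/cohomology duality over a field, while yours isolates precisely how much of the diagram is needed (only injectivity) and supplies the rest by pure linear algebra.
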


\begin{proof}
By Theorem~\ref{thm: main}, the induced maps
\begin{align*}
 \Psi^*\colon H^*(X(\fan);\Q) \rightarrow H^*(Y;\Q)
 \quad \text{and}\quad
 \Psi_*\colon H_*(Y;\Q) \rightarrow H_*(X(\fan);\Q)
\end{align*} 
are isomorphisms.
Thus, the claim follows from \eqref{eq: degree of Psi} and the commutativity of the diagram \eqref{eq: projection formula diagram} since $d>0$.
\end{proof}

\vspace{10pt}

%%%%%%%%%%%%%%%%%%%
\subsection{Correspondences on degree $2$}\label{subsec: corresponding on degree 2}
%%%%%%%%%%%%%%%%%%%
In the previous subsection, we proved that the induced map $\Psi^*\colon H^*(X(\fan);\Q) \rightarrow H^*(Y;\Q)$ is a ring isomorphism.
In this subsection, we describe the isomorphism on degree $2$
\begin{align}\label{eq: induced on degree 2}
 \Psi^*\colon H^2(X(\fan);\Q) \rightarrow H^2(Y;\Q)
\end{align} 
explicitly.

The Peterson variety $Y$ admits line bundles $\LB_{\lambda}|_Y$ for $\lambda\in\WL$ (see Section~\ref{subsec: line bundles}), where $\WL$ is the weight lattice of $T$.
For simplicity, we write this line bundle as $\LB_{\lambda}$
by omitting the symbol $|_Y$ in the rest of this paper.
Thus, we have Chern classes $c_1(\LB_{\lambda})\in H^2(Y;\Q)$. 
On the toric variety $X(\fan)$, we have the torus invariant prime divisors $\Diva{i}$ and $\Divw{i}$ corresponding to the rays in $\fan$ generated by $-\alpha^{\vee}_i$ and $\cvarpi_i$, respectively. 
Regarding them as subvarieties of codimension $1$, they determine homology cycles $\cycle{\Diva{i}},\cycle{\Divw{i}}\in H_{2\rkg-2}(X(\fan);\Q)$ (\cite[Sect.19.1]{Ful98}).
Since the Poincar\'{e} duality map \eqref{eq: toric Poincare duality} is an isomorphism, we obtain cohomology classes $\PD{\Diva{i}},\PD{\Divw{i}}\in H^2(X(\fan);\Q)$ which satisfy
\begin{align}\label{eq: poincare duality}
 \PD{\Diva{i}}\cap\cycle{X(\fan)} = \cycle{\Diva{i}}
 , \quad
 \PD{\Divw{i}}\cap\cycle{X(\fan)} = \cycle{\Divw{i}}.
\end{align} 
In what follows, we describe the relation between $\PD{\Diva{i}},\PD{\Divw{i}}\in H^2(X(\fan);\Q)$ and 
$c_1(\LB_{\lambda})\in H^2(Y;\Q)$ under the isomorphism \eqref{eq: induced on degree 2}.

\begin{remark}
{\rm 
In \cite[Sect.~12.4]{CLS}, the cohomology class associated to a torus invariant prime divisor $D$ on a simplicial toric variety is given by $\ell^{-1}c_1(L_{\ell D})$, where $\ell\in\Z_{>0}$ is a positive integer such that $\ell D$ is Cartier and  $L_{\ell D}$ is the corresponding line bundle. See \cite[Proposition~12.4.13~(d)]{CLS} and its proof for details.
This agrees with our construction in \eqref{eq: poincare duality} by \cite[Example~2.5.5 and Proposition~19.1.2]{Ful98}.}
\end{remark}

\vspace{10pt}

We begin with the following claim. 

\begin{lemma}\label{lem: proportional lemma}
For $i\in I$, there exists $m_i\in\Q$ such that
\begin{align*}
 m_i \Psi^*(\PD{\Divw{i}}) = c_1(\LB_{\alpha_i}) \in H^2(Y;\Q).
\end{align*}
\end{lemma}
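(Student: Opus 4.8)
The plan is to realize $\Psi^{*}(\PD{\Divw{i}})$ as the first Chern class of a line bundle on $Y$ that is rationally a power of $\LB_{\alpha_i}$, exploiting the fact that the Cox coordinate cutting out $\Divw{i}$ pulls back under $\Psi$ to the function $\qp{i}$. Under the identification $X(\fan)=(\C^{2I}-\EL)/T$, the divisor $\Divw{i}$ is the locus $\{y_i=0\}$ by \eqref{eq: invariant divisor}, so the Cox coordinate $y_i$ is a global section of the rank-one reflexive sheaf $\mathcal{O}_{X(\fan)}(\Divw{i})$ whose divisor of zeros is exactly $\Divw{i}$. Reading off the third map of the exact sequence \eqref{eq: exact sequence of modules}, the class of $\Divw{i}$ in $\mathrm{Cl}(X(\fan))\cong\WL$ is $\alpha_i$, and \eqref{eq: def of T-action on C2r} shows that $y_i$ carries the $T$-weight $\alpha_i$; this is the toric counterpart of the line bundle $\LB_{\alpha_i}$ of Section~\ref{subsec: line bundles}.

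First I would make the pullback precise by passing to a Cartier multiple. Since $\fan$ is simplicial, $\Divw{i}$ is $\Q$-Cartier (\cite[Proposition~4.2.7]{CLS}); choose $\ell_i\in\Z_{>0}$ with $\ell_i\Divw{i}$ Cartier, so that $\PD{\Divw{i}}=\ell_i^{-1}c_1(\mathcal{O}_{X(\fan)}(\ell_i\Divw{i}))$ in $H^{2}(X(\fan);\Q)$ and $y_i^{\ell_i}$ is a section of the genuine line bundle $\mathcal{O}_{X(\fan)}(\ell_i\Divw{i})$ cutting out $\ell_i\Divw{i}$. I would then identify $\Psi^{*}\mathcal{O}_{X(\fan)}(\ell_i\Divw{i})$ with $\LB_{\alpha_i}^{\otimes\ell_i}$ on $Y$ using the commutative square relating $\widetilde{\Psi}\colon P_Y\to\C^{2I}-\EL$ from \eqref{eq: Psi tilde} to the two quotient maps, together with the $B\twoheadrightarrow T$ equivariance of $\widetilde{\Psi}$ furnished by Lemma~\ref{lem: double equivalence}~(1) and Lemma~\ref{lem: single equivalence}~(1): the line bundle attached on the Cox quotient to the character $\alpha_i$ of $T$ pulls back through $B\twoheadrightarrow T$ to the line bundle $\LB_{\alpha_i}$ defined via the same character. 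Under this identification the section $y_i^{\ell_i}$ pulls back to $\widetilde{\Psi}^{*}(y_i)^{\ell_i}=\qp{i}^{\ell_i}$, i.e.\ to $\phi_{\alpha_i}^{\otimes\ell_i}$; since $\Psi$ is surjective (Proposition~\ref{prop: surjective Psi}) this section is not identically zero, whence $\mathrm{div}(\phi_{\alpha_i}^{\otimes\ell_i})=\Psi^{*}(\ell_i\Divw{i})$. Taking first Chern classes and dividing by $\ell_i$ yields $\Psi^{*}(\PD{\Divw{i}})=c_1(\LB_{\alpha_i})$, so one may in fact take $m_i=1$; in particular a rational $m_i$ exists, as claimed.

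The hard part will be the line-bundle identification in the second step, where the orbifold nature of $X(\fan)$ must be handled with care. Because $X(\fan)$ is only simplicial (and, as already flagged after \eqref{eq: geometric quotient 1}, some ray vectors of $\fan$ fail to be primitive), the class $\alpha_i$ need not lie in $\Pic(X(\fan))$, so $\mathcal{O}(\Divw{i})$ is a priori only a $\Q$-line bundle and I cannot pull it back directly; this is precisely why the argument is routed through the Cartier multiple $\ell_i\Divw{i}$. The comparison $\Psi^{*}\mathcal{O}(\ell_i\Divw{i})\cong\LB_{\alpha_i}^{\otimes\ell_i}$ therefore has to be carried out $T$-equivariantly on $\C^{2I}-\EL$, where $\Pic$ is trivial and equivariant line bundles are classified by characters of $T$, and then descended through the geometric quotient. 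Since every identification ultimately takes place in $H^{2}(Y;\Q)$ and the statement asks only for proportionality, any discrepancy surviving these orbifold subtleties is absorbed into the rational constant $m_i$.
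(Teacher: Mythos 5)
Your proposal proves the lemma, but by a genuinely different route from the paper. The paper argues homologically: it pushes forward the cycle of the zero scheme $\mathcal{Z}(\phi_{\alpha_i})$ under $\Psi$ (Lemma~C of the Appendix), identifies that cycle with $c_1(\LB_{\alpha_i})\cap\cycle{Y}$ using the Cohen--Macaulayness of $Y$, and then combines the projection formula, the degree $d$ of $\Psi$, Theorem~\ref{thm: main} and Corollary~\ref{cor: PD for Y} to obtain proportionality with an \emph{unknown} rational constant; the value $m_i=1$ is only pinned down afterwards by the equivariant localization computations (Lemmas~\ref{lem: localization to eB and w0B toric} and \ref{lem: localization to eB and w0B}). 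You instead identify the line bundle $\Psi^{*}\mathcal{O}_{X(\fan)}(\ell_i\Divw{i})$ with $\LB_{\alpha_i}^{\otimes \ell_i}$ directly, via Cox descent on $\C^{2I}-\EL$ and the $B\twoheadrightarrow T$ equivariance of $\widetilde{\Psi}$. This is more self-contained (it needs neither Theorem~\ref{thm: main}, nor the Cohen--Macaulay property of $Y$, nor Corollary~\ref{cor: PD for Y}), and, once completed, it yields $m_i=1$ on the spot, which would make the localization lemmas unnecessary for Corollary~\ref{cor: = 1 lemma}. What the paper's route buys is that it sidesteps all descent subtleties of the Cox quotient, at the price of only getting proportionality here.

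Two points in your write-up deserve attention. First, the assertion that $\mathrm{Cl}(X(\fan))\cong\WL$ via \eqref{eq: exact sequence of modules} is false in general: when a ray vector $-\alpha_i^{\vee}$ is non-primitive, that sequence computes the class group of the \emph{stack}, not of the variety. In type $A_1$ one has $X(\fan)\cong\P^{1}$, so $\mathrm{Cl}(X(\fan))=\Z$ and the two invariant points $\Diva{1}$, $\Divw{1}$ are linearly equivalent, whereas the sequence would assign them the distinct classes $\varpi_1$ and $\alpha_1=2\varpi_1$. This does not damage your proof, since you use it only as motivation; what your argument actually needs is the equivariant statement that the pullback of $\mathcal{O}(\ell_i\Divw{i})$ under the quotient map \eqref{eq: geometric quotient 1} is the trivial bundle with $T$-character $\ell_i\alpha_i$. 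Second, that statement is equivalent to the multiplicity-one claim that $\Divw{i}$ pulls back to $\{y_i=0\}$ with coefficient one, and this is exactly where non-primitivity could bite; you should record why it holds here: the $T$-action is free on the locus where all $x_j\ne 0$ (because $\varpi_1,\ldots,\varpi_{\rkg}$ form a basis of $\WL=\Hom(T,\C^{\times})$), and the generic point of $\{y_i=0\}$ lies in that locus, so the quotient map is a principal $T$-bundle there. The analogous claim fails for the divisors $\Diva{i}$: in type $A_1$ the local equation of $\Diva{1}$ pulls back to $x_1^{2}$, and $\mathcal{O}(\Diva{1})$ descends from the character $2\varpi_1$, not $\varpi_1$; so your mechanism is special to the $\Divw{i}$ rays. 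If one does not want to verify the multiplicity, your closing remark is a valid fallback: whatever positive multiple occurs is absorbed into $m_i$, so the lemma as stated follows in any case, though the stronger conclusion $m_i=1$ then does not.
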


\begin{proof}
Recalling the construction of the morphism $\Psi\colon Y\rightarrow X(\fan)$ and \eqref{eq: desired section 1}, it sends the zero locus $Z(\phi_{\alpha_i})\subseteq Y$ to $\Divw{i}\subseteq X(\fan)$;
\begin{align}\label{eq: pushing forward zero schemes 2}
 \Psi (Z(\phi_{\alpha_i})) \subseteq \Divw{i} 
\end{align}
by \eqref{eq: invariant divisor}.
Let $\mathcal{Z}(\phi_{\alpha_i})$ be the zero scheme defined by the section $\phi_{\alpha_i} \colon Y\rightarrow \LB_{\alpha_i}|_Y$ whose underlying closed subset is $Z(\phi_{\alpha_i})$.
Since we have $\dim_{\C}Y=\rkg$, this scheme determines a cycle $\cycle{\mathcal{Z}(\phi_{\alpha_i})}$ in the homology group $H_{2(\rkg-1)}(Y;\Q)$, and \eqref{eq: pushing forward zero schemes 2} implies that 
\begin{align}\label{eq: pushing forward zero schemes}
 \Psi_*(\cycle{\mathcal{Z}(\phi_{\alpha_i})}) = \ell_i \cycle{\Divw{i}}
 \qquad \text{in $H_{2(\rkg-1)}(X(\fan);\Q)$}
\end{align}
for some $\ell_i\in\Z_{\ge0}$ (see Lemma C in Appendix).
Also, since $Y$ is Cohen-Macaulay (\cite[Corollary~3.8]{ab-fu-ze},\cite{Peterson}), we have 
\begin{align}\label{eq: PD of zero scheme 1}
 \cycle{\mathcal{Z}(\phi_{\alpha_i})} = c_1(\LB_{\alpha_i})\cap\cycle{Y}
\end{align}
by \cite[Proposition~14.1 and Example~14.1.1]{Ful98}.
Recall from \eqref{eq: degree of Psi} that the degree of the map $\Psi\colon Y\rightarrow X(\fan)$ was denoted by $d>0$.
Under the induced map $\Psi_*$ on the homology groups, the cap product $\ell_i \Psi^*(\PD{\Divw{i}})\cap\cycle{Y}\in H_{2(\rkg-1)}(Y;\Q)$ is sent to
\begin{align*}
 \Psi_*\Big(\ell_i \Psi^*(\PD{\Divw{i}})\cap\cycle{Y}\Big) 
 &= \ell_i \PD{\Divw{i}}\cap\Psi_*\cycle{Y} \qquad \text{(by the projection formula)}\\
 &= \ell_i (\PD{\Divw{i}})\cap d\cycle{X(\fan)} \\
 &= d \ell_i \cycle{\Divw{i}} \hspace{80pt} \text{(by \eqref{eq: poincare duality})}\\
 &= \Psi_*\Big( d c_1(\LB_{\alpha_i})\cap \cycle{Y}\Big) \qquad \text{(by \eqref{eq: pushing forward zero schemes} and \eqref{eq: PD of zero scheme 1})}.
\end{align*} 
By Theorem~\ref{thm: main}, 
the induced map $\Psi_*$ on homology groups is also an isomorphism so that we obtain from this computation that
\begin{align*}
 \ell_i \Psi^*(\PD{\Divw{i}})\cap\cycle{Y}
 = d c_1(\LB_{\alpha_i})\cap \cycle{Y}.
\end{align*} 
By Corollary~\ref{cor: PD for Y}, this further implies that
\begin{align*}
 \ell_i \Psi^*(\PD{\Divw{i}})
 = d c_1(\LB_{\alpha_i}).
\end{align*} 
Since $d>0$, we  obtain the  desired claim.
\end{proof}

\vspace{10pt}

We next prove that $m_i=1$ for all $i\in I$.
For that purpose, we use the torus equivariant cohomology rings. One may finds detail explanations on general properties of equivariant cohomology rings in \cite{AF-Equiv} and \cite[Sect.~12.4]{CLS}.

Let $\trs$ be a complex torus so that $\trs\cong (\C^{\times})^r$ for some $r\in\Z_{>0}$.
Let $\pi\colon E\trs\rightarrow B\trs$ be a universal principal $\trs$-bundle whose total space $E\trs$ is contractible.
For a topological $\trs$-space $X$, the $\trs$-equivariant cohomology of $X$ is defined by
\begin{align*}
 H_{\trs}^*(X;\Q) \coloneqq H^*(E\trs\times_{\trs} X;\Q), 
\end{align*} 
where $E\trs\times_{\trs} X$ is the quotient space of the product $E\trs\times X$ for the $\trs$-action given by $k\cdot (u,x)\coloneqq (uk^{-1}, kx)$ for $k\in \trs$ and $(u,x)\in E\trs\times X$.
We denote by $\text{pt}$ a one point space with the trivial $\trs$-action.
Then we have $E\trs\times_{\trs} \text{pt}=B\trs\times \text{pt}$ so that 
$H_{\trs}^*(\text{pt};\Q) = H^*(B\trs;\Q)$.
With this identification, the map $X\rightarrow \text{pt}$ induces a ring homomorphism
\begin{align*}
 H^*(B\trs;\Q)\rightarrow H_{\trs}^*(X;\Q)
\end{align*} 
which allows us to regard $H_{\trs}^*(X;\Q)$ as an algebra over $H^*(B\trs;\Q)$.
So it would be helpful for us to clarify the structure of $H^*(B\trs;\Q)$.
For that purpose, let us set $M\coloneqq \Hom(\trs,\C^{\times})$.
For each $\lambda\in M$, there is a one-dimensional representation $\C_{\lambda}(=\C)$ of $\trs$ given by
\begin{align*}
 k\cdot z \coloneqq \lambda(k)z
\end{align*} 
for $k\in \trs$ and $z\in\C_{\lambda}$. This gives rise to an associated complex line bundle 
\begin{align*}
 E\trs\times_{\trs}\C_{\lambda} \rightarrow B\trs
 \quad ; \quad
 [u,z]\rightarrow \pi(u), 
\end{align*} 
where $[u,z]\in E\trs\times_{\trs}\C_{\lambda}$ is the class represented by $(u,z)\in E\trs\times\C_{\lambda}$.
By taking the first Chern class of this line bundle, we obtain a map
\begin{align}\label{eq: sym alg and coh BK 1}
 M(=\Hom(\trs,\C^{\times})) \rightarrow H^2(B\trs;\Q)
 \quad ; \quad
 \lambda \rightarrow c_1(E\trs\times_{\trs}\C_{\lambda}).
\end{align} 
We denote by $\text{Sym}_{\Q}(M)$ the symmetric algebra on the the vector space $M\otimes_{\Z}\Q$ over $\Q$. Then
it is well-known that this map induces a ring isomorphism
\begin{align}\label{eq: sym alg and coh BK 2}
 \text{Sym}_{\Q}(M) \rightarrow H^*(B\trs;\Q)
\end{align} 
which sends $\lambda\in M$ to $c_1(E\trs\times_{\trs}\C_{\lambda})\in H^2(B\trs;\Q)$.
In the rest of this paper, we identify $\text{Sym}_{\Q}(M)$ and $H^*(B\trs;\Q)$ under this isomorphism\footnote{Some authors identify $\text{Sym}_{\Q}(M)$ and $H^*(B\trs;\Q)$ by putting a sign to \eqref{eq: sym alg and coh BK 1} (e.g.\ \cite[Sect.~12.4]{CLS}).}.
\\

We now return to our situation to prove that $m_i=1$ for all $i\in I$.
Recall that the morphism $\Psi\colon Y\rightarrow X(\fan)$ is equivariant with respect to the homomorphism $\psi\colon S\rightarrow T/Z$ of tori given in \eqref{eq: equivariance 50'}.
Thus, it induces an algebra-homomorphism
\begin{align*}
 \Psi^* \colon H_{T/Z}^*(X(\fan);\Q) \rightarrow H_S^*(Y;\Q)
\end{align*} 
with respect to the ring homomorphism
\begin{align*}
 \psi^* \colon H^*(B(T/Z);\Q)\rightarrow H^*(BS;\Q).
\end{align*}
As we saw in \eqref{eq: def of alpha map}, we have a character of $S(\subseteq T)$ given by 
\begin{align*}
 \alpha_S \colon S\rightarrow \C^{\times}
 \quad ; \quad 
 t\mapsto \alpha_S(t)=\alpha_1(t)=\cdots=\alpha_{\rkg}(t).
\end{align*}
Thus, by \eqref{eq: equivariance 50'}, we can express the above map $\psi^*\colon H^2(B(T/Z);\Q)\rightarrow H^2(BS;\Q)$ on degree $2$ as
\begin{align}\label{eq: S and C* 20}
 \psi^*(\alpha_i)= -2\alpha_S(=-2\alpha_1=\cdots=-2\alpha_{\rkg})
 \qquad (i\in I).
\end{align}
Since $S$ is a $1$ dimensional torus and the map $\alpha_S \colon  S\rightarrow \C^{\times}$ has at most a finite kernel, it follows that $H^*(BS;\Q)$ is the polynomial ring freely generated by $\alpha_S$, i.e., $H^*(BS;\Q)=\Q[\alpha_S]$.

\begin{remark}\label{rem: ha-ho-ma generator}
{\rm 
The generator $\alpha_S\in H^2(BS;\Q)$ is denoted by $t$ in  \cite{ha-ho-ma}.}
\end{remark}

\vspace{10pt}

We have the following commutative diagram induced by the morphism $\Psi\colon Y\rightarrow X(\fan)$ whose vertical arrows are the forgetful maps:
\begin{equation}\label{eq: forgetful diagram}
\begin{split}
\xymatrix{
H_{T/Z}^2(X(\fan);\Q) \ar[d]_{}\ar[r]^{\ \ \ \Psi^*} & H_S^2(Y;\Q) \ar[d]^{} \\
H^2(X(\fan);\Q) \ar[r]^{\ \ \Psi^*} & H^2(Y;\Q).
}
\end{split}
\end{equation}
The torus invariant prime divisor $\Divw{i}$ on $X(\fan)$ determines an equivariant cohomology class $$\PD{\Divw{i}}_{T/Z}\in H_{T/Z}^2(X(\fan);\Q)$$ (see \cite[Proof of Proposition~12.4.13]{CLS}) which is sent to the ordinary cohomology class $\PD{\Divw{i}}\in H^2(X(\fan);\Q)$ under the left vertical map in \eqref{eq: forgetful diagram}.
Also, since the line bundle $\LB_{\alpha_i}$ over $G/B$ is $G$-equivariant (see \eqref{eq:def of L_lambda 3}), its restriction over $Y$ is an $S$-equivariant line bundle.
Thus, it has the $S$-equivariant first Chern class $$c_1^S(\LB_{\alpha_i})\in H_S^2(Y;\Q)$$ which is defined to be the first Chern class of the induced line bundle $ES\times_S \LB_{\alpha_i}\rightarrow ES\times_S Y$.
It is sent to the (ordinary) first Chern class $c_1(\LB_{\alpha_i})\in H^2(Y;\Q)$ under the right vertical map in \eqref{eq: forgetful diagram}.

Recall that Lemma~\ref{lem: proportional lemma} states an equality in $H^2(Y;\Q)$ sitting in the bottom right of \eqref{eq: forgetful diagram}.
We lift and study this equality in the $S$-equivariant cohomology $H_S^2(Y;\Q)$. For that purpose, let us describe some basic properties of $H_S^*(Y;\Q)$.
By \cite{pre13}, $Y$ admits an affine paving so that we have $H^{\text{odd}}(Y;\Q)=0$.
Thus, the following holds (e.g.\ \cite[III.\ Theorem~4.2]{mi-to}):
the structure map (as an $H^*(BS;\Q)$-algebra)
\begin{align}\label{eq: algebra structure map}
 H^*(BS;\Q)\rightarrow H_S^*(Y;\Q)
\end{align}
is injective and the forgetful map 
\begin{align*}
 H_S^*(Y;\Q)\rightarrow H^*(Y;\Q)
\end{align*}
is a surjective homomorphism whose kernel is generated by the image of  $H^{>0}(BS;\Q)$ under the structure map \eqref{eq: algebra structure map}.
We also denote by $\alpha_S\in H_S^2(Y;\Q)$ the image of $\alpha_S\in H^2(BS;\Q)$ under the injective map \eqref{eq: algebra structure map}.

By the commutativity  of the diagram \eqref{eq: forgetful diagram}, it follows that its right vertical map sends $\Psi^*(\PD{\Divw{i}}_{T/Z})$ to $\Psi^*(\PD{\Divw{i}}))$. Combining this with Lemma~\ref{lem: proportional lemma}, we see that the difference $m_i\Psi^*(\PD{\Divw{i}}_{T/Z})-c_1^S(\LB_{\alpha_i})\in H_S^2(Y;\Q)$ belongs to its kernel.
Therefore, there exists $n_i\in\Q$ such that 
\begin{align}\label{eq: equiavriant linear formula}
m_i\Psi^*(\PD{\Divw{i}}_{T/Z})=c_1^S(\LB_{\alpha_i})+n_i\alpha_S \quad \text{in $H_S^2(Y;\Q)$}.
\end{align}

Recall that $\rep{w}_J B\in Y$ is the $S$-fixed point corresponding to $J\subseteq I$ (see \eqref{eq: S-fixed point of Y}). 
Let $p_J\in X(\fan)$ be the torus fixed point corresponding to the maximal cone $\sigma_J$ (see \eqref{eq: def of maximal cone}) for $J\subseteq I$. Then the following holds.

\begin{lemma}\label{lem: image of fixed points}
We have $\Psi(\rep{w}_J B)=p_J$ for $J\subseteq I$.
\end{lemma}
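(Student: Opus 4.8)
The plan is to evaluate the two families of functions defining $\Psi$ at a representative $\rep{w}_J\in N(T)$ and read off which homogeneous coordinates of
$\Psi(\rep{w}_J B) = [\Delta_{\varpi_1}(\rep{w}_J),\ldots,\Delta_{\varpi_{\rkg}}(\rep{w}_J);\qp{1}(\rep{w}_J),\ldots,\qp{\rkg}(\rep{w}_J)]$
vanish. I expect to find that the vanishing coordinates are exactly the $x_i$ with $i\in J$ together with the $y_i$ with $i\notin J$. Since the coordinate $x_i$ corresponds to the ray $-\alpha^{\vee}_i$ and $y_i$ to the ray $\cvarpi_i$ (see \eqref{eq: invariant divisor}), this vanishing set is precisely the set of rays of the maximal cone $\sigma_J=\ctc{J}{I-J}$ from \eqref{eq: def of maximal cone}, which is exactly the vanishing pattern of the distinguished point $p_J$. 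The two computations are essentially independent: for the functions $\qp{i}$ I would simply invoke Lemma~\ref{lem: fixed points of small Peterson}, whose proof already shows that $\qp{i}(\rep{w}_J)=0$ if and only if $i\notin J$, so that $y_i=0$ exactly for $i\notin J$.

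The main work is the family $\Delta_{\varpi_i}$. Here I would use that $\rep{w}_J v_{\varpi_i}$ is a weight vector of weight $w_J(\varpi_i)$, so that $\Delta_{\varpi_i}(\rep{w}_J)=(\rep{w}_J v_{\varpi_i})_{\varpi_i}$ is nonzero precisely when $w_J(\varpi_i)=\varpi_i$ (the weight space $V(\varpi_i)_{\varpi_i}$ being one-dimensional and $\rep{w}_J$ invertible). For $i\notin J$ each simple reflection $s_j$ with $j\in J$ fixes $\varpi_i$, because $\langle\varpi_i,\alpha^{\vee}_j\rangle=\delta_{ij}=0$; hence $w_J(\varpi_i)=\varpi_i$ and $\Delta_{\varpi_i}(\rep{w}_J)\ne0$. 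For $i\in J$ I would show $w_J(\varpi_i)\ne\varpi_i$ by pairing with simple coroots: since $w_J^{-1}(\Delta_J)=-\Delta_J$ by \eqref{eq: property of wJ}, the map $\alpha_j\mapsto -w_J^{-1}\alpha_j$ is a permutation of $\{\alpha_j\mid j\in J\}$, so there is $j_0\in J$ with $w_J^{-1}\alpha_{j_0}=-\alpha_i$, giving $\langle w_J(\varpi_i),\alpha^{\vee}_{j_0}\rangle=\langle\varpi_i,-\alpha^{\vee}_i\rangle=-1\ne\delta_{ij_0}=\langle\varpi_i,\alpha^{\vee}_{j_0}\rangle$. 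Thus $\Delta_{\varpi_i}(\rep{w}_J)=0$ exactly for $i\in J$, completing the vanishing pattern.

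It remains to conclude that this pattern pins down the point $p_J$. I would do this by exhibiting $t\in T$ carrying $\Psi(\rep{w}_J B)$ to the standard $0/1$ representative of $p_J$, namely solving $\varpi_i(t)=\Delta_{\varpi_i}(\rep{w}_J)^{-1}$ for $i\notin J$ and $\alpha_i(t)=\qp{i}(\rep{w}_J)^{-1}$ for $i\in J$ (the remaining coordinates being $0$ and hence unaffected). Such $t$ exists because the $\rkg$ characters $\{\varpi_i\mid i\notin J\}\cup\{\alpha_i\mid i\in J\}$ are linearly independent in $\WL$, which follows from the nonsingularity of the Cartan matrix of $J$ exactly as in the proof of Lemma~\ref{lem: dim of cones}. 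I expect the representation-theoretic step $w_J(\varpi_i)\ne\varpi_i$ for $i\in J$ to be the principal obstacle, together with the bookkeeping of the correspondence between coordinates, rays, and the distinguished point of $\sigma_J$; once these are settled the identification $\Psi(\rep{w}_J B)=p_J$ is immediate.
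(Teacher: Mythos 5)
Your proof is correct, and its skeleton matches the paper's: evaluate the Cox coordinates of $\Psi(\rep{w}_J B)$, determine which ones vanish, and match that pattern with $p_J$. The differences lie in how two of the three steps are carried out. For the vanishing of the $\Delta_{\varpi_i}$-coordinates, the paper simply quotes \eqref{eq: fixed point 1}, which it had already obtained from Lemma~\ref{lem: zero=Schubert divisor} ($Z(\psi_{\varpi_i})=\Omega_{s_i}$) together with the Bruhat-order criterion $w_J\ge s_i \Leftrightarrow i\in J$; you instead re-derive it by a direct weight computation ($\rep{w}_J v_{\varpi_i}$ has weight $w_J\varpi_i$, which equals $\varpi_i$ if and only if $i\notin J$, using $w_J^{-1}(\Delta_J)=-\Delta_J$). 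Your argument is self-contained and essentially extends the proof of Lemma~\ref{lem: zero=Schubert divisor} from $s_k$ to $w_J$, at the cost of redoing work the paper recycles; the $\qp{i}$ half is handled identically in both (via Lemma~\ref{lem: fixed points of small Peterson}). For the final identification, the paper observes that the vanishing pattern places $\Psi(\rep{w}_J B)$ in the intersection of the divisors $\Diva{i}$ for $i\in J$ and $\Divw{i}$ for $i\notin J$, which by the orbit--cone correspondence is the single point $p_J$; you instead normalize the nonzero coordinates to $1$ by an explicit $t\in T$, using nonsingularity of the Cartan matrix of $J$ to get linear independence of $\{\varpi_i\mid i\notin J\}\cup\{\alpha_i\mid i\in J\}$, thereby landing on the standard $0/1$ Cox representative of the distinguished point of $\sigma_J$. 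Both conclusions are valid; the paper's is shorter because it delegates to a standard toric fact, whereas yours makes the torus action explicit, and in doing so you genuinely need the full ``if and only if'' of the vanishing pattern (which in the paper's route comes for free from $(x_i,y_i)\ne(0,0)$ on $\C^{2I}-\EL$). Two small points to tighten: linear independence of the characters gives injectivity of the induced map on character lattices, and you should invoke the standard fact that a homomorphism of tori with injective character map is surjective to produce $t$; and the identification of the $0/1$ point with $p_J$ rests on the description of distinguished points in the Cox construction, which deserves a citation rather than being treated as immediate.
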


\begin{proof}
Recall that the components of the morphism $\Psi$ are given by the functions which produces the sections $\psi_{\varpi_i}$ and $\phi_{\alpha_i}$ of the line bundles $\LB_{\varpi_i}|_Y$ and $\LB_{\alpha_i}|_Y$, respectively.
For these sections, we have
\begin{align*}
 \psi_{\varpi_i}(\rep{w}_J B)=[\rep{w}_J,\Delta_{\varpi_i}(\rep{w}_J)]=0 \quad \text{for $i\in J$}
\end{align*} 
by \eqref{eq: psi i by ()} and \eqref{eq: fixed point 1}, and we have
\begin{align*}
 \phi_{\alpha_i}(\rep{w}_J B)=[\rep{w}_J ,\qp{i}(\rep{w}_J)]=0 \quad \text{for $i\notin J$}
\end{align*} 
by \eqref{eq: desired section 1} and \eqref{eq: fixed point 2}.
This implies that we can write 
\begin{align*}
 \Psi(\rep{w}_J B) = [x_1,\ldots,x_{\rkg};y_1,\ldots,y_{\rkg}]
\end{align*} 
with $x_i=0$ for $i\in J$ and $y_i=0$ for $i\notin J$.
Recalling the description of the torus invariant divisors $\Diva{i}$ and $\Divw{i}$ on $X(\fan)$ from \eqref{eq: invariant divisor}, it follows that $\Psi(\rep{w}_J B)$ is the intersection of the divisors 
$\Diva{i}$ for $i\in J$ and $\Divw{i}$ for $i\notin J$, 
and hence it must be the torus fixed point $p_J\in X(\fan)$ corresponding to the maximal cone $\sigma_J=\ctc{J}{I-J}$ (see \eqref{eq: def of maximal cone}). Namely, we obtain that $\Psi(\rep{w}_J B) = p_J\in X(\fan)$.
\end{proof}

\vspace{10pt}

We denote by $i_J\colon \{p_J\}\hookrightarrow X(\fan)$ and $j_J\colon \{\rep{w}_J B\}\hookrightarrow Y$ the inclusion maps. 
Since $\Psi(\rep{w}_J B)=p_J$ by Lemma~\ref{lem: image of fixed points}, we have the following commutative diagram
\begin{equation*}
\begin{split}
\xymatrix{
H_{T/Z}^2(X(\fan);\Q) \ar[d]_{i_J^*}\ar[r]^{\quad\Psi^*} & H_S^2(Y;\Q) \ar[d]^{j_J^*} \\
H^2(B(T/Z);\Q)
\ar[r]^{\quad\psi^*} & 
H^2(BS;\Q),
}
\end{split}
\end{equation*}
where we identified the equivariant cohomology rings of fixed points ($p_J$ and $\rep{w}_J B$) and the cohomology rings of classifying spaces ($B(T/Z)$ and $BS$, respectively).
Namely, we have 
\begin{align*}
 \psi^*\circ i_J^*(\PD{\Divw{i}}_{T/Z}) = j_J^*\circ\Psi^*(\PD{\Divw{i}}_{T/Z})
 \quad \text{in $H^2(BS;\Q)$}.
\end{align*}
After multiplying $m_i$ to the both sides of this equality, 
we can compute the right hand side by \eqref{eq: equiavriant linear formula} so that we obtain
\begin{align}\label{eq: partial equality}
 m_i \psi^*\circ i_J^*(\PD{\Divw{i}}_{T/Z}) = j_J^*(c_1^S(\LB_{\alpha_i}))+n_i \alpha_S
 \quad \text{in $H^2(BS;\Q)$},
\end{align}
where we used the fact that the composition of the structure map \eqref{eq: algebra structure map} and $j_J^*$ is the identity map on $H^2(BS;\Q)$.
The next lemma computes $i_J^*(\PD{\Divw{i}}_{T/Z})$ in \eqref{eq: partial equality} for the cases $J=\emptyset$ and $J=I$.

\begin{lemma}\label{lem: localization to eB and w0B toric}
For $i\in I$, we have
\begin{align*}
i_{\emptyset}^*(\PD{\Divw{i}}_{T/Z}) = \alpha_i 
\quad  \text{and} \quad
i_I^*(\PD{\Divw{i}}_{T/Z})= 0.
\end{align*}
\end{lemma}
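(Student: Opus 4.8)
The plan is to evaluate both restrictions through the standard localization formula describing how the equivariant class of a torus-invariant prime divisor restricts to a torus-fixed point of a simplicial toric variety. Under the conventions fixed in \eqref{eq: sym alg and coh BK 1}--\eqref{eq: sym alg and coh BK 2}, the character lattice of $T/Z$ is $M=\RL$ and the cocharacter lattice is $N=\CWL$ (so $N_{\R}=\mathfrak{t}_{\R}$); the fixed point $p_J$ corresponds to the maximal cone $\sigma_J=\ctc{J}{I-J}$, and $\Divw{i}$ is attached to the ray generated by the primitive vector $\cvarpi_i\in\CWL$. The governing principle (cf.\ \cite[Proof of Proposition~12.4.13 and Sect.~12.4]{CLS}) is: if $\cvarpi_i$ is not among the ray generators of $\sigma_J$, then $i_J^*(\PD{\Divw{i}}_{T/Z})=0$; and if it is, then $i_J^*(\PD{\Divw{i}}_{T/Z})$ is the linear functional $m\in M_{\R}$ dual to the ray generators of $\sigma_J$, normalized so that it takes the value $1$ on $\cvarpi_i$. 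My first step is to record this formula in the normalization matching \eqref{eq: sym alg and coh BK 1}, and then to apply it to the two cones $\sigma_\emptyset$ and $\sigma_I$.

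For $J=I$, the cone $\sigma_I=\ctc{I}{\emptyset}=\text{cone}(\{-\alpha^{\vee}_j\mid j\in I\})$ has ray generators $-\alpha^{\vee}_j$ $(j\in I)$, and $\text{cone}(\cvarpi_i)$ is not a face of it: by Lemma~\ref{lem: intersection of two cones}, $\sigma_I\cap\text{cone}(\cvarpi_i)=\ctc{I}{\emptyset}\cap\ctc{\emptyset}{\{i\}}=\ctc{\emptyset}{\emptyset}=\{\bm{0}\}$. Equivalently, $p_I\notin\Divw{i}$: by Lemma~\ref{lem: image of fixed points} the point $p_I=\Psi(\rep{w}_I B)$ has all its $x$-coordinates equal to $0$, hence all its $y$-coordinates nonzero, whereas $\Divw{i}=\{y_i=0\}$ by \eqref{eq: invariant divisor}. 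Either way, the localization formula gives $i_I^*(\PD{\Divw{i}}_{T/Z})=0$.

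For $J=\emptyset$, the cone $\sigma_\emptyset=\ctc{\emptyset}{I}=\text{cone}(\{\cvarpi_k\mid k\in I\})$ has ray generators the fundamental coweights $\cvarpi_k$ $(k\in I)$, which form a $\Z$-basis of $\CWL$; thus $\sigma_\emptyset$ is smooth, $\Divw{i}$ is Cartier near $p_\emptyset$, and $\cvarpi_i$ is one of the generators. The restriction is therefore the dual-basis element $m\in M_{\R}$ determined by $\langle m,\cvarpi_k\rangle=\delta_{ik}$ for all $k\in I$. Since the simple roots satisfy $\langle\alpha_i,\cvarpi_k\rangle=\delta_{ik}$, we get $m=\alpha_i$, so $i_\emptyset^*(\PD{\Divw{i}}_{T/Z})=\alpha_i$.

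The delicate point is not the combinatorics but the bookkeeping of signs: I must confirm that \cite[Proof of Proposition~12.4.13]{CLS}, transported across the \emph{sign-free} identification \eqref{eq: sym alg and coh BK 1} used here (rather than the signed convention of \cite[Sect.~12.4]{CLS} flagged in the footnote), produces exactly the dual functional $+\alpha_i$ and not $-\alpha_i$. A concrete way to settle this is to work in the smooth affine chart $U_{\sigma_\emptyset}$ around $p_\emptyset$, where $\Divw{i}$ is cut out by the coordinate corresponding to the dual vector $\alpha_i$, and to read the equivariant first Chern class off the $T$-weight of that coordinate, comparing with \eqref{eq: sym alg and coh BK 1}. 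I would also note that the possible non-primitivity of the vectors $-\alpha^{\vee}_j$ (in types containing $A_1$ or $B_k$) is harmless here: $\Divw{i}$ is attached to the primitive vector $\cvarpi_i$, and the only maximal cone in which $\cvarpi_i$ occurs is $\sigma_\emptyset$, whose ray generators are all primitive fundamental coweights, so the dual-basis computation is unaffected.
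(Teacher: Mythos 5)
Your proof is correct and essentially identical to the paper's: the vanishing at $p_I$ is obtained exactly as in the paper from Lemma~\ref{lem: intersection of two cones} (the ray generated by $\cvarpi_i$ is not a face of $\sigma_I$, so $p_I\notin\Divw{i}$ and the restriction is zero), and the value at $p_\emptyset$ comes from the dual-basis localization formula of \cite[Sect.~12.4]{CLS} applied to the smooth cone $\sigma_\emptyset$, whose generators $\cvarpi_k$ have dual basis $\alpha_k$ --- precisely the paper's argument via the proof of \cite[Lemma~12.4.17]{CLS}. The sign bookkeeping you flag as the delicate point is handled in the paper simply by recording that the identification used in \cite{CLS} equals \eqref{eq: sym alg and coh BK 1} times $-1$; your proposed chart verification does settle it, provided you read off the weight of the fiber of the line bundle $\LB_{\Divw{i}}$ at the fixed point (the normal direction to $\Divw{i}$, of weight $+\alpha_i$) rather than the weight of the coordinate function cutting out $\Divw{i}$, since these two differ by exactly the sign in question.
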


\begin{proof}
For the first equality, the fixed point $p_{\emptyset}\in X(\fan)$ is the torus orbit corresponding to the maximal cone $\sigma_{\emptyset}=\ctc{\emptyset}{I}=\text{cone}( \{\cvarpi_k \mid k\in I \} )$. 
The dual basis of the primitive generators $\{\cvarpi_k \mid k\in I \}\subset \CWL$ of this cone is the set of simple roots $\{\alpha_k \mid k\in I\}\subset \WL$. Thus, by the proof of \cite[Lemma~12.4.17]{CLS}, 
the image $i_{\emptyset}^*(\PD{\Divw{i}}_{T/Z})$ is the $i$-th dual basis $\alpha_i$, where we note that \cite{CLS} uses an identification $\mathsf{s}\colon \text{Sym}_{\Q}(M)\rightarrow H^*(B(T/Z);\Q)$ with $M=\Hom(T/Z,\C^{\times})$ induced by the linear map \eqref{eq: sym alg and coh BK 1} (for the torus $T/Z$) times $-1$.

For the second equality, the fixed point $p_{I}\in X(\fan)$ is the torus orbit corresponding to the maximal cone $\sigma_{I}=\ctc{I}{\emptyset}=\text{cone}( \{-\alpha^{\vee}_j \mid \ j\in I \} )$.
The divisor $\Divw{i}$ in the claim is the closure of the torus orbit corresponding to $\ctc{\emptyset}{\{i\}}=\text{cone}( \cvarpi_i )$. For these cones, we have $\ctc{\emptyset}{\{i\}}\cap \sigma_{I}=\ctc{\emptyset}{\emptyset}=\{\bm{0}\}$ by Lemma~\ref{lem: intersection of two cones}, and hence $\ctc{\emptyset}{\{i\}}\not\subseteq \sigma_{I}$ which means $p_I\notin \Divw{i}$ by \cite[Theorem~3.2.6~(d)]{CLS}.
Hence, we obtain the second equality 
(\cite[Proposition~12.4.13 (c) and equation (12.4.16)]{CLS}).
\end{proof}

\vspace{10pt}

The next lemma computes $j_J^*(c_1^S(\LB_{\alpha_i}))$ in \eqref{eq: partial equality} for the cases $J=\emptyset$ and $J=I$.

\begin{lemma}\label{lem: localization to eB and w0B}
For $i\in I$, we have
\begin{align*}
j_{\emptyset}^*(c_1^S(\LB_{\alpha_i})) = -\alpha_S
\quad  \text{and} \quad
j_I^*(c_1^S(\LB_{\alpha_i}))= \alpha_S.
\end{align*}
\end{lemma}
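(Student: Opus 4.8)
The plan is to use the standard localization description of an $S$-equivariant Chern class at a fixed point. For the inclusion $j_J\colon\{\rep{w}_JB\}\hookrightarrow Y$ of the $S$-fixed point, the class $j_J^*(c_1^S(\LB_{\alpha_i}))\in H^2(BS;\Q)$ is exactly the $S$-weight through which $S$ acts on the fiber $(\LB_{\alpha_i})_{\rep{w}_JB}$, under the sign-free identification of $H^2(BS;\Q)$ with $\Hom(S,\C^\times)\otimes_{\Z}\Q$ fixed in \eqref{eq: sym alg and coh BK 2}. So the whole computation reduces to determining this fiber weight, and since $S\subseteq T$ it suffices to compute the $T$-weight on the fiber and then restrict along $S\hookrightarrow T$.

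First I would compute the $T$-weight of the fiber of the $G$-equivariant line bundle $\LB_{\lambda}$ at an arbitrary $T$-fixed point $\rep{w}B$, for $\lambda\in\WL$. Writing the fiber as $\{[\rep{w},z]\mid z\in\C\}$ and combining the $G$-action \eqref{eq:def of L_lambda 3} with the defining relation \eqref{eq:def of L_lambda 2}, for $t\in T$ one gets $t\cdot[\rep{w},z]=[t\rep{w},z]=[\rep{w}\cdot(\rep{w}^{-1}t\rep{w}),z]=[\rep{w},\lambda(\rep{w}^{-1}t\rep{w})^{-1}z]$, where the last step uses $\rep{w}^{-1}t\rep{w}\in T\subseteq B$. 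Hence $T$ acts on the fiber through the character $t\mapsto\lambda(\rep{w}^{-1}t\rep{w})^{-1}$, i.e.\ by the weight $-w\lambda$; this is independent of the chosen representative $\rep{w}\in N(T)$ since $T$ is abelian.

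Next I would specialize to $\lambda=\alpha_i$ and to the two relevant elements $w_{\emptyset}=e$ and $w_I=w_0$, the longest elements of the trivial parabolic and of $W$ itself. For $J=\emptyset$ the fiber weight is $-\alpha_i$, and restricting along $S\hookrightarrow T$ yields $-\alpha_S$ by the definition \eqref{eq: def of alpha map} of $\alpha_S$, which is the first equality. For $J=I$ the fiber weight is $-w_0\alpha_i$; using $w_0\alpha_i=-\alpha_{i^*}$, where $i\mapsto i^*$ is the involution of $I$ induced by $w_0$, this equals $\alpha_{i^*}$, again a simple root, and its restriction to $S$ is $\alpha_{i^*}|_S=\alpha_S$, which is the second equality.

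The step requiring the most care is the sign bookkeeping. One must use the paper's sign-free identification \eqref{eq: sym alg and coh BK 2} of $H^*(BS;\Q)$ with $\text{Sym}_{\Q}(\Hom(S,\C^\times))$ rather than the opposite convention of \cite{CLS}, so that $c_1^S$ restricted to a fixed point reads off $+$(fiber weight). One then must invoke that $w_0$ sends each positive simple root $\alpha_i$ to the negative simple root $-\alpha_{i^*}$: it is precisely this second sign flip that converts the naive value $-\alpha_S$ into $+\alpha_S$ at $\rep{w}_IB$, and its absence that leaves $-\alpha_S$ at $\rep{w}_{\emptyset}B=B$. Everything else is a direct unwinding of the definitions of $\LB_{\alpha_i}$, of $c_1^S$, and of the $S$-fixed points $\rep{w}_JB$.
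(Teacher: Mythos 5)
Your proof is correct and follows essentially the same route as the paper's: both identify $j_J^*(c_1^S(\LB_{\alpha_i}))$ with the $S$-weight of the fiber $\LB_{\alpha_i}|_{\rep{w}_JB}$ via the sign-free identification \eqref{eq: sym alg and coh BK 2}, compute that weight from \eqref{eq:def of L_lambda 3} and \eqref{eq:def of L_lambda 2} as $-w_J\alpha_i$ restricted to $S$, and finish with $w_I\alpha_i=-\alpha_{i^*}$ and $\alpha_{i^*}|_S=\alpha_S$. The only (cosmetic) difference is that you first compute the $T$-weight $-w\lambda$ for general $\lambda$ and $w$ and then restrict to $S$, whereas the paper runs the same computation directly with $t\in S$ and $\lambda=\alpha_i$.
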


\begin{proof}
We first prove the second equality.
By definition, the $S$-equivariant first Chern class $c_1^S(\LB_{\alpha_i})$ is the first Chern class of the line bundle $$ES\times_S \LB_{\alpha_i}\rightarrow ES\times_S Y.$$ 
Hence, the pullback $j_I^*(c_1^S(\LB_{\alpha_i}))$ under the inclusion $j_I\colon \{\rep{w}_I B\}\hookrightarrow Y$ is the first Chern class of the line bundle 
\begin{align}\label{eq: line bundle over BS}
ES\times_S (\LB_{\alpha_i}|_{\rep{w}_I B})
\rightarrow 
ES\times_S \{\rep{w}_I B\}\cong BS,
\end{align}
where $\LB_{\alpha_i}|_{\rep{w}_I B}(\cong\C)$ is the fiber of $\LB_{\alpha_i}$ over the point $\rep{w}_I B\in Y$. 
Recalling the identification of $M\otimes_{\Z}\Q$ and $H^2(BS;\Q)$ given in \eqref{eq: sym alg and coh BK 2}, the first Chern class of the line bundle \eqref{eq: line bundle over BS} is the weight of the 1-dimensional $S$-representation $\LB_{\alpha_i}|_{\rep{w}_I B}$. So let us compute this weight.
For $t\in S$ and $[\rep{w}_I,z]\in \LB_{\alpha_i}|_{\rep{w}_I B}$, we have
\begin{align*}
 t\cdot [\rep{w}_I ,z] = [t\rep{w}_I ,z] = [\rep{w}_I \cdot \rep{w}_I^{-1}t\rep{w}_I ,z]
 =[\rep{w}_I , \alpha_i((\rep{w}_I^{-1}t\rep{w}_I)^{-1})z]
 =[\rep{w}_I , (-w_I\alpha_i)(t),z],
\end{align*}
where the first equality follows from \eqref{eq:def of L_lambda 3} and the third equality follows from \eqref{eq:def of L_lambda 2}.
Since $w_I\in W$ is the longest element, we have $(-w_I\alpha_i)(t)=\alpha_{i^*}(t)=\alpha_S(t)$ since $t\in S$, where $i\mapsto i^*$ is the automorphism of $I$ induced by the longest element $w_I$.
Therefore, we conclude that
\begin{align*}
j_I^*(c_1^S(\LB_{\alpha_i})) = c_1(ES\times_S (\LB_{\alpha_i}|_{\rep{w}_I B})) = \alpha_S .
\end{align*}

The same argument also implies the first equality $j_{\emptyset}^*(c_1^S(\LB_{\alpha_i})) = -\alpha_S$ by replacing the longest element $w_I\in W$ to the identity element $w_{\emptyset}\in W$.
\end{proof}

\begin{remark}
{\rm
Lemma~\ref{lem: localization to eB and w0B} can also be deduced from \cite[Lemma 5.2 and equation (5.7)]{AHMMS}, where we note that our line bundle $\LB_{\lambda}$ is written as $\LB_{-\lambda}$ in \cite{AHMMS} for $\lambda\in\WL=\Hom(T,\C^{\times})$.}
\end{remark}

\vspace{10pt}

By these two lemmas and \eqref{eq: S and C* 20}, the equality \eqref{eq: partial equality} for the case $J=\emptyset$ takes of the form
\begin{align*}
 -2 m_i \alpha_S = (-1+n_i)\alpha_S.
\end{align*}
Similarly, the equality \eqref{eq: partial equality} for the case $J=I$ takes of the form
\begin{align*}
 0 = (1+n_i)\alpha_S.
\end{align*}
Thus, we obtain that $n_i =-1$ and $m_i=1$ for $1\le i\le r$.
Therefore, Lemma~\ref{lem: proportional lemma} now implies the following.

\begin{corollary}\label{cor: = 1 lemma}
For $i\in I$, we have $m_i=1$, that is,
\begin{align*}
 \Psi^*(\PD{\Divw{i}}) = c_1(\LB_{\alpha_i}) \in H^2(Y;\Q).
\end{align*}
\end{corollary}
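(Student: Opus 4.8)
The plan is to recognize that the corollary is now a formal consequence of a small linear system, so the genuine content sits in the equivariant machinery already assembled above rather than in the corollary itself. Lemma~\ref{lem: proportional lemma} supplies a scalar $m_i\in\Q$ with $m_i\Psi^*(\PD{\Divw{i}})=c_1(\LB_{\alpha_i})$, and all that remains is to pin the value to $m_i=1$. The idea is to lift this proportionality to $S$-equivariant cohomology, where it can only hold up to a multiple of $\alpha_S$ (this is the relation \eqref{eq: equiavriant linear formula}, with an a priori undetermined $n_i$), and then to extract enough scalar relations by restricting to well-chosen $S$-fixed points.

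First I would restrict \eqref{eq: equiavriant linear formula} to the two $S$-fixed points $\rep{w}_{\emptyset}B$ and $\rep{w}_IB$. By Lemma~\ref{lem: image of fixed points} these are carried by $\Psi$ to the torus fixed points $p_{\emptyset}$ and $p_I$ of $X(\fan)$, so the commutative square relating $i_J^*$, $j_J^*$, and $\psi^*$ converts \eqref{eq: equiavriant linear formula} into the scalar identity \eqref{eq: partial equality} for each of $J=\emptyset$ and $J=I$. Substituting the localizations from Lemma~\ref{lem: localization to eB and w0B toric} (namely $i_{\emptyset}^*(\PD{\Divw{i}}_{T/Z})=\alpha_i$ and $i_I^*(\PD{\Divw{i}}_{T/Z})=0$), the fiber-weight computations of Lemma~\ref{lem: localization to eB and w0B} ($j_{\emptyset}^*(c_1^S(\LB_{\alpha_i}))=-\alpha_S$ and $j_I^*(c_1^S(\LB_{\alpha_i}))=\alpha_S$), and the formula $\psi^*(\alpha_i)=-2\alpha_S$ from \eqref{eq: S and C* 20}, the two cases collapse to
\begin{align*}
 -2m_i\,\alpha_S = (-1+n_i)\,\alpha_S
 \qquad\text{and}\qquad
 0 = (1+n_i)\,\alpha_S .
\end{align*}
Since $\alpha_S$ freely generates $H^*(BS;\Q)=\Q[\alpha_S]$, these are honest scalar equations: the second forces $n_i=-1$, and feeding this into the first forces $m_i=1$. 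Plugging $m_i=1$ back into Lemma~\ref{lem: proportional lemma} yields $\Psi^*(\PD{\Divw{i}})=c_1(\LB_{\alpha_i})$ in $H^2(Y;\Q)$, which is exactly the claim.

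I do not expect any real obstacle in the corollary proper, since the substantive work has already been isolated into the preceding lemmas; the proof is essentially the solution of a nondegenerate $2\times 2$ system. If there is a delicate point, it lies upstream—in the weight computation $j_I^*(c_1^S(\LB_{\alpha_i}))=\alpha_S$, which relies on $w_I=w_0$ being the longest element so that $-w_I\alpha_i=\alpha_{i^*}$ and hence restricts to $\alpha_S$ on $S$. The choice of the specific pair $J=\emptyset, I$ (rather than any other two fixed points) is precisely what makes the resulting system nondegenerate and forces $m_i=1$ exactly.
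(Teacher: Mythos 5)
Your proposal is correct and follows exactly the paper's own argument: lift Lemma~\ref{lem: proportional lemma} to the $S$-equivariant relation \eqref{eq: equiavriant linear formula}, restrict to the fixed points $\rep{w}_{\emptyset}B$ and $\rep{w}_I B$ via Lemma~\ref{lem: image of fixed points}, substitute the localizations from Lemmas~\ref{lem: localization to eB and w0B toric} and \ref{lem: localization to eB and w0B} together with \eqref{eq: S and C* 20}, and solve the resulting $2\times 2$ system to get $n_i=-1$ and $m_i=1$. No differences of substance from the paper's proof.
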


\vspace{5pt}
We finish this subsection by providing the following claim which compensates the statement of Corollary~\ref{cor: = 1 lemma}.

\begin{corollary}\label{cor: = 1 lemma 2}
For $i\in I$, we have
\begin{align*}
 \Psi^*(\PD{\Diva{i}}) = c_1(\LB_{\varpi_i}) \in H^2(Y;\Q).
\end{align*}
\end{corollary}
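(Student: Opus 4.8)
The plan is to deduce the claim from Corollary~\ref{cor: = 1 lemma} by combining a single linear relation among the torus-invariant divisor classes on $X(\fan)$ with the invertibility of the Cartan matrix, thereby avoiding any repetition of the equivariant localization computation.

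First I would record the relevant relation in $H^2(X(\fan);\Q)$. Since $X(\fan)$ is a simplicial complete toric variety, the classes $\PD{\Diva{j}}$ and $\PD{\Divw{j}}$ $(j\in I)$ generate $H^2(X(\fan);\Q)$, and their linear relations are precisely the image of the character lattice under the map $\beta^{\star}\colon\RL\to\Z^{2I}$ of \eqref{eq: first beta} occurring in the divisor-class sequence \eqref{eq: exact sequence of modules} (\cite[Sect.~12.4]{CLS}, using the stacky presentation recalled in Remark~\ref{rem: bo-ch-sm}). Reading off the image of $\alpha_i$ with $\langle\alpha_i,-\alpha^{\vee}_j\rangle=-c_{i,j}$ and $\langle\alpha_i,\cvarpi_j\rangle=\delta_{ij}$, where $c_{i,j}=\langle\alpha_i,\alpha^{\vee}_j\rangle$, this yields for each $i\in I$ the relation
\begin{align*}
 \PD{\Divw{i}}=\sum_{j\in I}c_{i,j}\,\PD{\Diva{j}}\qquad\text{in }H^2(X(\fan);\Q).
\end{align*}

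Next I would apply the ring homomorphism $\Psi^*$ to this relation. By Corollary~\ref{cor: = 1 lemma} the left-hand side becomes $\Psi^*(\PD{\Divw{i}})=c_1(\LB_{\alpha_i})$, and since $\LB_{\lambda+\mu}\cong\LB_{\lambda}\otimes\LB_{\mu}$ the assignment $\lambda\mapsto c_1(\LB_{\lambda})$ is additive, so the identity $\alpha_i=\sum_{j}c_{i,j}\varpi_j$ gives $c_1(\LB_{\alpha_i})=\sum_j c_{i,j}\,c_1(\LB_{\varpi_j})$. Comparing the two sides, for every $i\in I$ I obtain
\begin{align*}
 \sum_{j\in I}c_{i,j}\big(\Psi^*(\PD{\Diva{j}})-c_1(\LB_{\varpi_j})\big)=0\qquad\text{in }H^2(Y;\Q).
\end{align*}
As the Cartan matrix $(c_{i,j})_{i,j\in I}$ is invertible (as already used in Lemma~\ref{lem: dim of cones}), each summand must vanish, which is exactly the asserted equality $\Psi^*(\PD{\Diva{j}})=c_1(\LB_{\varpi_j})$.

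The step I expect to be the main obstacle is establishing the displayed relation with the correct Cartan-integer coefficients $c_{i,j}$. Because $-\alpha^{\vee}_j$ need not be primitive in $\CWL$ --- exactly when the root system contains a factor of type $A_1$ or $B_k$, as noted before Remark~\ref{rem: bo-ch-sm} --- one must use the chosen ray generators $-\alpha^{\vee}_j,\cvarpi_j$ of the stacky fan $(\CWL,\fan,\beta)$, i.e.\ the map $\beta^{\star}$ of \eqref{eq: first beta}, rather than the primitive generators; working with primitive generators would rescale the coefficients by the divisibilities of the $-\alpha^{\vee}_j$ and destroy the cancellation above. Granting the correct relation, the remaining steps are purely formal. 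As a cross-check, one could instead mirror the equivariant argument of Corollary~\ref{cor: = 1 lemma} with $\psi_{\varpi_i},\Diva{i},\LB_{\varpi_i}$ in place of $\phi_{\alpha_i},\Divw{i},\LB_{\alpha_i}$; there the analogue $i_I^*(\PD{\Diva{i}}_{T/Z})=-\varpi_i$ of Lemma~\ref{lem: localization to eB and w0B toric}, together with the $*$-invariance of the inverse Cartan matrix, produces the same normalization $m_i=1$.
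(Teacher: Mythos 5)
Your proposal is correct and is essentially the paper's own proof: the paper derives the same relation $\PD{\Divw{i}}=\sum_{j\in I}c_{i,j}\PD{\Diva{j}}$ in $H^2(X(\fan);\Q)$ from the linear equivalences among torus-invariant divisors, combines it with $c_1(\LB_{\alpha_i})=\sum_{j\in I}c_{i,j}c_1(\LB_{\varpi_j})$ (coming from $\alpha_i=\sum_{j\in I}c_{i,j}\varpi_j$) and Corollary~\ref{cor: = 1 lemma}, and then concludes by the invertibility of the Cartan matrix over $\Q$. The primitivity issue you single out as the main obstacle is treated in the paper exactly as in your proposal: the relation is written by pairing against the chosen (possibly non-primitive) ray vectors $-\alpha^{\vee}_j$ and $\cvarpi_j$, with \cite[Sect.~12]{CLS} cited for it.
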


\begin{proof}
Since $\Diva{i}$ and $\Divw{i}$ are the torus invariant prime divisors on $X(\fan)$ corresponding to the ray vectors $-\alpha^{\vee}_i$ and $\cvarpi_i$, we have a linear relation
\begin{align*}
 \sum_{j\in I} \langle \alpha_i, -\alpha^{\vee}_j \rangle \PD{\Diva{j}}
 + \sum_{j\in I} \langle \alpha_i, \cvarpi_j \rangle \PD{\Divw{j}} =0
 \qquad \text{in $H^2(X(\fan);\Q)$}
\end{align*} 
for each $i\in I$ (see \cite[Sect.~12]{CLS}).
For the coefficients in this equality, we have $\langle \alpha_i, \cvarpi_j\rangle=\delta_{ij}$ and $\langle \alpha_i, -\alpha^{\vee}_j\rangle=-c_{i,j}$, where $(c_{i,j})_{i,j\in I}$ is the Cartan matrix of $I$.
Thus, this relation can be expressed as
\begin{align}\label{eq: PD relation before 2}
 \PD{\Divw{i}} 
 = \sum_{j\in I} c_{i,j} \PD{\Diva{j}}
 \qquad \text{in $H^2(X(\fan);\Q)$}.
\end{align} 
Recall that we have $\alpha_i = \sum_{j\in I} c_{i,j} \varpi_j$ in the weight lattice $\WL$ which implies that
\begin{align}\label{eq: alpha in pi 2}
 c_1(\LB_{\alpha_i}) = \sum_{j\in I} c_{i,j} c_1(\LB_{\varpi_j}) 
 \qquad \text{in $H^2(Y;\Q)$}.
\end{align}
Since the Cartan matrix $(c_{ij})_{i,j\in I}$ is invertible over $\Q$, Corollary~\ref{cor: = 1 lemma} now implies the desired claim by \eqref{eq: PD relation before 2} and \eqref{eq: alpha in pi 2}.
\end{proof}

\vspace{5pt}

%%%%%%%%%%%%%%%%%%%
\subsection{Compatibility with explicit presentations for $H^*(X(\fan);\Q)$ and $H^*(Y;\Q)$}\label{subsec: explicit presentations}
%%%%%%%%%%%%%%%%%%%
Since $X(\fan)$ is a simplicial projective toric variety, there is a well-known presentation of the cohomology ring $H^*(X(\fan);\Q)$ given as a quotient of Stanley-Reisner ring by linear relations (\cite[Sect.\ 12.4]{CLS}).
Also, Harada-Horiguchi-Masuda (\cite{ha-ho-ma}) gave an explicit presentation of the cohomology ring $H^*(Y;\Q)$. 
In this subsection, we explain the compatibility of these presentations and the isomorphism $\Psi^*\colon H^*(X(\fan);\Q) \rightarrow H^*(Y;\Q)$.

We begin with reviewing the presentation of the cohomology ring $H^*(X(\fan);\Q)$ according to \cite[Sect.\ 12.4]{CLS}.
Consider a ring homomorphism
\begin{align}\label{eq: coh generators}
 \Q[X_i,Y_i\mid i\in I] \rightarrow H^*(X(\fan);\Q)
\end{align} 
defined by $X_i\mapsto \PD{\Diva{i}}$ and $Y_i\mapsto \PD{\Divw{i}}$ for $i\in I$.
Let $\mathcal{I},\mathcal{J}\subseteq \Q[X_i,Y_i\mid i\in I]$ be the ideals ogiven by
\begin{align*}
 &\mathcal{I} \coloneqq \langle X_iY_i \mid i\in I \rangle, \\
 &\mathcal{J} \coloneqq \left\langle \left. \sum_{j\in I} \langle \alpha_i , -\alpha^{\vee}_j \rangle X_j + \sum_{j\in I} \langle \alpha_i , \cvarpi_j \rangle Y_j \ \right| \  i\in I \right\rangle 
 = \left\langle \left.  - \sum_{j\in I} c_{i,j} X_j \ + \ Y_i \ \right| \  i\in I \right\rangle,
\end{align*} 
where $(c_{i,j})_{i,j\in I}=(\langle \alpha_i , -\alpha^{\vee}_j \rangle)_{i,j\in I}$ is the Cartan matrix of $I$.
Since the fan $\fan$ is simplicial and projective, 
the map \eqref{eq: coh generators} induces a ring isomorphism (\cite[Theorem~12.4.1]{CLS}):
\begin{align*}
 \Q[X_i,Y_i\mid i\in I]/(\mathcal{I}+\mathcal{J}) \stackrel{\cong}{\rightarrow} H^*(X(\fan);\Q).
\end{align*} 

We may simplify the quotient ring $\Q[X_i,Y_i\mid i\in I]/(\mathcal{I}+\mathcal{J})$ as follows. 
Observe from the definition of the ideal $J$ that we have 
\begin{align*}
 \overline{Y}_i = \sum_{j\in I} c_{i,j} \overline{X}_j  
 \qquad \text{in $\Q[X_i,Y_i\mid i\in I]/(\mathcal{I}+\mathcal{J})$}
\end{align*}
(cf.\ \eqref{eq: PD relation before 2}),
where $\overline{X}_i$ and $\overline{Y}_i$ are the image of $X_i$ and $Y_i$ in the quotient, respectively.
This means that we can rewrite the above isomorphism as
\begin{align}\label{eq: coh generators 2}
 \Q[X_i \mid i\in I]/\mathcal{I}' \stackrel{\cong}{\rightarrow} H^*(X(\fan);\Q)
\end{align} 
which sends $X_i$ to $\PD{\Diva{i}}$,
where $\mathcal{I}'$ is an ideal of $\Q[X_i \mid i\in I]$ given by
\begin{align*}
 \mathcal{I}' \coloneqq \left\langle \left. X_i \sum_{j\in I} c_{i,j} X_j \ \right| \  i\in I \right\rangle.
\end{align*} 
Let $\WL_{\Q} \coloneqq \WL\otimes_{\Z}\Q$, where $\WL$ is the weight lattice of $T$.
Since we have $\alpha_i = \sum_{j\in I}c_{i,j}\varpi_j$ in $\WL$, there is a ring isomorphism $\Q[X_i \mid i\in I]/\mathcal{I}'\cong \Sym \WL_{\Q}\hspace{2pt}/\langle \varpi_i \alpha_i \mid i\in I \rangle$ given by $X_i\mapsto \varpi_i$ $(i\in I)$. 
Therefore, the presentation \eqref{eq: coh generators 2} can be expressed as
\begin{align}\label{eq: coh generators 4}
 \Sym \WL_{\Q} \hspace{2pt} /\langle \varpi_i\alpha_i \mid i\in I\rangle \stackrel{\cong}{\rightarrow} H^*(X(\fan);\Q)
\end{align} 
which sends $\varpi_i$ to $\PD{\Diva{i}}$ for $i\in I$.
This gives us an explicit presentation of the cohomology ring $H^*(X(\fan);\Q)$ in terms of representation theory.

In \cite{ha-ho-ma}, Harada-Horiguchi-Masuda gave an explicit presentation of the cohomology ring $H^*(Y;\Q)$ (see also \cite[Remark in Sect.~4]{ha-ho-ma}) by using the quotient ring appeared in \eqref{eq: coh generators 2}. By identifying this quotient ring with $\Sym \WL_{\Q}\hspace{2pt}/\langle \varpi_i \alpha_i \mid i\in I \rangle$ as above, their presentation can be expressed as
\begin{align}\label{eq: coh generators 3}
 \Sym \WL_{\Q}\hspace{2pt} /\langle \varpi_i\alpha_i \mid i\in I\rangle \stackrel{\cong}{\rightarrow} H^*(Y;\Q)
\end{align} 
which sends $\varpi_i$ to $c_1(\LB_{\varpi_i})\in H^2(Y;\Q)$.

Now, Corollary~\ref{cor: = 1 lemma 2} means that the presentations \eqref{eq: coh generators 4} and \eqref{eq: coh generators 3} are compatible with the isomorphism $\Psi^*\colon H^*(X(\fan);\Q)\rightarrow H^*(Y;\Q)$. Namely, we obtain the following commutative diagram of ring isomorphisms.
\[%WinTpicVersion4.32a
{\unitlength 0.1in%
\begin{picture}(18.8000,7.7000)(14.0000,-20.9000)%
% STR 2 0 3 0 Black White  
% 4 1400 1500 1400 1600 2 0 0 0
% $H^*(X(\fan);\Q)$
\put(14.0000,-16.0000){\makebox(0,0)[lb]{$H^*(X(\fan);\Q)$}}%
% STR 2 0 3 0 Black White  
% 4 3010 1500 3010 1600 2 0 0 0
% $H^*(Y;\Q)$
\put(30.1000,-16.0000){\makebox(0,0)[lb]{$H^*(Y;\Q)$}}%
% VECTOR 2 0 3 0 Black White  
% 2 2400 1510 2900 1510
% 
\special{pn 8}%
\special{pa 2400 1510}%
\special{pa 2900 1510}%
\special{fp}%
\special{sh 1}%
\special{pa 2900 1510}%
\special{pa 2833 1490}%
\special{pa 2847 1510}%
\special{pa 2833 1530}%
\special{pa 2900 1510}%
\special{fp}%
% STR 2 0 3 0 Black White  
% 4 1900 2120 1900 2220 2 0 0 0
% $\Sym\Lambda_{\Q}/\langle \varpi_i\alpha_i\mid i\in I \rangle$
\put(19.0000,-22.2000){\makebox(0,0)[lb]{$\Sym\Lambda_{\Q}/\langle \varpi_i\alpha_i\mid i\in I \rangle$}}%
% VECTOR 2 0 3 0 Black White  
% 2 2270 2000 1950 1680
% 
\special{pn 8}%
\special{pa 2270 2000}%
\special{pa 1950 1680}%
\special{fp}%
\special{sh 1}%
\special{pa 1950 1680}%
\special{pa 1983 1741}%
\special{pa 1988 1718}%
\special{pa 2011 1713}%
\special{pa 1950 1680}%
\special{fp}%
% VECTOR 2 0 3 0 Black White  
% 2 2960 2000 3280 1680
% 
\special{pn 8}%
\special{pa 2960 2000}%
\special{pa 3280 1680}%
\special{fp}%
\special{sh 1}%
\special{pa 3280 1680}%
\special{pa 3219 1713}%
\special{pa 3242 1718}%
\special{pa 3247 1741}%
\special{pa 3280 1680}%
\special{fp}%
% STR 2 0 3 0 Black White  
% 4 2610 1350 2610 1450 2 0 0 0
% {\scriptsize $\Psi^*$}
\put(26.1000,-14.5000){\makebox(0,0)[lb]{{\scriptsize $\Psi^*$}}}%
% STR 2 0 3 0 Black White  
% 4 1690 1820 1690 1920 2 0 0 0
% {\scriptsize \eqref{eq: coh generators 4}}
\put(16.9000,-19.2000){\makebox(0,0)[lb]{{\scriptsize \eqref{eq: coh generators 4}}}}%
% STR 2 0 3 0 Black White  
% 4 3200 1820 3200 1920 2 0 0 0
% {\scriptsize \eqref{eq: coh generators 3}}
\put(32.0000,-19.2000){\makebox(0,0)[lb]{{\scriptsize \eqref{eq: coh generators 3}}}}%
\end{picture}}%
\]

\vspace{10pt}

\begin{remark}
{\rm 
Harada-Horiguchi-Masuda's presentation \eqref{eq: coh generators 3} implies that we have $c_1(\LB_{\varpi_i})c_1(\LB_{\alpha_i})=0$ for $i\in I$ in the cohomology ring $H^*(Y;\Q)$.
In fact, 
by Proposition~\ref{prop: empty intersection}, the pair $(\psi_{\varpi_i},\phi_{\alpha_i})$ is a nowhere-zero section of the direct sum bundle $\LB_{\varpi_i}\oplus \LB_{\alpha_i}$ over $Y$. Therefore, the Euler class $e(\LB_{\varpi_i}\oplus \LB_{\alpha_i})(=c_1(\LB_{\varpi_i})c_1(\LB_{\alpha_i}))$ must vanish in $H^*(Y;\Q)$ (\cite[Property 9.7]{Milnor-Stasheff}).
This gives a simple geometric explanation for the equality $c_1(\LB_{\varpi_i})c_1(\LB_{\alpha_i})=0$.
}
\end{remark}

\vspace{5pt}

\begin{remark}
{\rm 
The cohomology ring $H^*(Y;\Q)$ is also isomorphic to $W$-invariant subring of the cohomology ring of the corresponding regular semisimple Hessenberg variety (see Remark~\ref{rem: generalization}), where the latter variety is in fact the non-singular toric variety associated with the collection of Weyl chambers. An explicit presentation of this $W$-invariant subring is known from the result of Klyachko \cite{Klyachko} which agrees with that given above.
}
\end{remark}

\vspace{20pt}

%%%%%%%%%%%%%%%%%%%
%%%%%%%%%%%%%%%%%%%
\section{Appendix}
%%%%%%%%%%%%%%%%%%%
%%%%%%%%%%%%%%%%%%%
We give a proof of Lemma~\ref{lem: intersection of two cones} here. Recall that 
\begin{align*}
 \ctc{J}{K} = \text{cone}( \{-\alpha^{\vee}_j \mid j\in J\}\cup\{\cvarpi_k \mid k\in K \} )\subset \mathfrak{t}_{\R}=\CWL\otimes_{\Z}\R
\end{align*}
for $K,J\subseteq I$ with $K\cap J=\emptyset$ 
and that $\fan=\{\ctc{J}{K} \mid J,K\subseteq I, \ J\cap K=\emptyset\}$.\\

\noindent
\textbf{Lemma~A.}\!\!
\textit{
For $\ctc{J}{K}, \ctc{P}{Q}\in \fan$, we have
\begin{align*}
 \ctc{J}{K}\cap \ctc{P}{Q} = \ctc{J\cap P}{K\cap Q}.
\end{align*}
In particular, the intersection of two cones in $\fan$ is a face of each.
}

\begin{proof}
Since it is obvious that $\ctc{J}{K}\cap \ctc{P}{Q} \supseteq \ctc{J\cap P}{K\cap Q}$ by definition, we prove that the opposite inclusion holds.
For that purpose, take an arbitrary element $v\in \ctc{J}{K}\cap \ctc{P}{Q}$.
Then $v$ can be written in two ways:
\begin{align}\label{eq: nonnegative expansion in cone}
 v = 
 \sum_{j\in J} x_j (-\alpha^{\vee}_j) + \sum_{k\in K} x_k \cvarpi_k 
 =\sum_{p\in P} y_p (-\alpha^{\vee}_p) + \sum_{q\in Q} y_q \cvarpi_q
\end{align}
for some coefficients $x_j,x_k\in \R_{\ge0}$ $(j\in J,\ k\in K)$ and $y_p,y_q\in \R_{\ge0}$ $(p\in P,\ q\in Q)$.
We first prove that 
\begin{align}\label{eq: vanishing for J-P}
 x_j=0 \qquad (j\in J- P)
\end{align}
in what follows.
By taking the pairing $\langle -\alpha_i,\ \ \rangle$ to $v$ in \eqref{eq: nonnegative expansion in cone} for $i\in J$, we obtain that
\begin{align*}
 \sum_{j\in J} \langle \alpha_i,\alpha^{\vee}_j \rangle x_j
 = \sum_{p\in P} \langle \alpha_i,\alpha^{\vee}_p \rangle y_p - \sum_{q\in Q} \delta_{iq} y_q
 \qquad (i\in J)
\end{align*}
since $\langle \alpha_i,\cvarpi_k \rangle=\delta_{ik}$ and $J\cap K=\emptyset$. 
We regard $J(\subseteq I)$ as a Dynkin diagram as in the proof of Lemma~\ref{lem: dim of cones}.
Then the coefficient matrix $C_J\coloneqq(\langle \alpha_i,\alpha^{\vee}_j \rangle)_{i,j\in J}$ appearing in this equation is the Cartan matrix associated with the Dynkin diagram $J$, and hence it is invertible over $\Q$.
Thus, for $j\in J$, we obtain 
\begin{align*}
 x_{j}
 &= \sum_{i\in J} (C_J^{-1})_{j,i} \left( \sum_{p\in P} \langle \alpha_i,\alpha^{\vee}_p \rangle y_p - \sum_{q\in Q} \delta_{iq} y_q \right) \\
 &= \sum_{p\in P} \left( \sum_{i\in J} (C_J^{-1})_{j,i} \langle \alpha_i,\alpha^{\vee}_p \rangle \right) y_p - \sum_{i\in J} (C_J^{-1})_{j,i} \sum_{q\in Q} \delta_{iq} y_q .
\end{align*}
The first summand in the last expression can be written as 
\begin{align*}
 &\sum_{p\in P} \left( \sum_{i\in J} (C_J^{-1})_{j,i} \langle \alpha_i,\alpha^{\vee}_p \rangle \right) y_p\\
 &\hspace{30pt}=
 \sum_{p\in P\cap J} \left( \sum_{i\in J} (C_J^{-1})_{j,i} \langle \alpha_i,\alpha^{\vee}_p \rangle \right) y_p 
 +\sum_{p\in P- J} \left( \sum_{i\in J} (C_J^{-1})_{j,i} \langle \alpha_i,\alpha^{\vee}_p \rangle \right) y_p \\
 &\hspace{30pt}=
 \sum_{p\in P\cap J} \delta_{jp} y_p 
 +\sum_{p\in P- J} \left( \sum_{i\in J} (C_J^{-1})_{j,i} \langle \alpha_i,\alpha^{\vee}_p \rangle \right) y_p ,
\end{align*}
where the last equality follows since $\langle \alpha_i,\alpha^{\vee}_p \rangle=(C_J)_{i,p}$ for $i,p\in J$.
The first summand in this expression vanishes when $j\in J- P$. 
Therefore, we obtain
\begin{align}\label{eq: appendix 10}
 x_j
 &= 
 \sum_{p\in P- J} \left( \sum_{i\in J} (C_J^{-1})_{j,i} \langle \alpha_i,\alpha^{\vee}_p \rangle \right) y_p
 - \sum_{i\in J} (C_J^{-1})_{j,i} \sum_{q\in Q} \delta_{iq} y_q 
 \qquad (j\in J-P).
\end{align}
In this equality, we have $\langle \alpha_i,\alpha^{\vee}_p \rangle \le 0$  since it is the $(i,p)$-th component (with $i\ne p$) of the Cartan matrix associated with $I$.
Also, since $C_J$ is a Cartan matrix, its inverse $(C_J)^{-1}$ consists of non-negative rational numbers (\cite{lu-ti92} or \cite[Sect.~13.1]{Humphreys78}).
Thus, the right hand side of \eqref{eq: appendix 10} is not positive.
Since $x_j$ was taken as a non-negative real number in \eqref{eq: nonnegative expansion in cone}, it follows that $x_j=0$ for $j\in J-P$ as claimed in \eqref{eq: vanishing for J-P}.

By changing the role of $(J,K)$ and $(P,Q)$ in the above argument, we also obtain
\begin{align*}
 y_p=0 \qquad (p\in P- J).
\end{align*}
That is, \eqref{eq: nonnegative expansion in cone} is now written as
\begin{align}\label{eq: expanding in two ways 2}
 -\sum_{j\in J\cap P} x_j \alpha^{\vee}_j + \sum_{k\in K} x_k \cvarpi_k 
 = -\sum_{p\in J\cap P} y_p \alpha^{\vee}_p + \sum_{q\in Q} y_q \cvarpi_q .
\end{align}
By taking the pairing $\langle -\alpha_i,\ \ \rangle$ to this equality for $i\in J\cap P$, we obtain that
\begin{align*}
 \sum_{j\in J\cap P} \langle \alpha_i,\alpha^{\vee}_j \rangle x_j
 = \sum_{p\in J\cap P} \langle \alpha_i,\alpha^{\vee}_p \rangle y_p
 \qquad (i\in J\cap P)
\end{align*}
since $J\cap P$ has no intersections with both $K$ and $Q$.
In particular, we have
\begin{align*}
 \sum_{j\in J\cap P} \langle \alpha_i,\alpha^{\vee}_j \rangle (x_j - y_j) = 0
 \qquad (i\in J\cap P).
\end{align*}
The coefficient matrix $(\langle \alpha_i,\alpha^{\vee}_j \rangle)_{i,j\in J\cap P}$ in this equality is invertible since it is the Cartan matrix associated with the Dynkin diagram $J\cap P$. 
Thus, we obtain $x_j=y_j$ for $j\in J\cap P$.
This means that \eqref{eq: expanding in two ways 2} is now written as
\begin{align*}
 \sum_{k\in K} x_k \cvarpi_k 
 = \sum_{q\in Q} y_q \cvarpi_q .
\end{align*}
Since $\cvarpi_1,\ldots,\cvarpi_{\rkg}$ form a basis of $\mathfrak{t}_{\R}=\CWL\otimes_{\Z}\R$, this equality implies that
\begin{align}\label{eq: expanding in two ways 3}
 x_k = 0 \quad (k\in K-Q).
\end{align}
Now, by \eqref{eq: vanishing for J-P} and \eqref{eq: expanding in two ways 3}, it follows that
\begin{align*}
 v = \sum_{j\in J\cap P} x_j (-\alpha^{\vee}_j) + \sum_{k\in K\cap Q} x_k \cvarpi_k 
\end{align*}
which implies that $v\in \ctc{J\cap P}{K\cap Q}$. Thus we obtain $\ctc{J}{K}\cap \ctc{P}{Q} \subseteq \ctc{J\cap P}{K\cap Q}$, as desired.
\end{proof}

\vspace{10pt}

We next give an alternative proof for Kostant's result used in the proof of Proposition~\ref{prop: surjective Psi}.
For $i\in I$, we consider the function defined by
\begin{align}\label{eq: around identity 10''}
 q'_i \colon  Y\cap U^-B/B \rightarrow \C
 \quad ; \quad
 uB\ (u\in U^-) \mapsto -(\text{Ad}_{u^{-1}}\nil)_{-\alpha_i},
\end{align}
where $(\text{Ad}_{u^{-1}}\nil)_{-\alpha_i}$ is the coefficient of the root vector $\f{i}\in\mathfrak{g}_{-\alpha_i}$ for the root  decomposition of  
$\text{Ad}_{u^{-1}}\nil\in \mathfrak{g}$.\\

\noindent
\textbf{Lemma~B (\cite[Theorem~27]{Kostant}).}\!\!
\textit{
The functions $q'_1,\ldots,q'_{\rkg}$ in \eqref{eq: around identity 10''} are algebraically independent in the coordinate ring $\C[Y\cap U^-B/B]$, where we regard $Y\cap U^-B/B$ as a reduced scheme.
}

\begin{proof}
Let $D\coloneqq|\Phi^+|$ be the number of positive roots. 
For the following argument, we choose an enumeration of positive roots
\begin{align}\label{eq:enumeration}
\Phi^+ = \{\beta_1,\ldots,\beta_D\}.
\end{align}
Then, we have an isomorphism
\begin{align*}
\C^D \xrightarrow{\sim}  U^-B/B
\quad ; \quad 
(t_1,\ldots,t_D) \mapsto \exp(t_1F_{\beta_1}) \cdots \exp(t_D F_{\beta_D})B.
\end{align*}
In what follows, we identify the coordinate ring $\C[U^-B/B]$ and the polynomial ring $\C[t_1,\ldots,t_D]$ through this map with the grading 
\begin{align*}
\deg t_i = \text{ht} (\beta_i)\ge1
\qquad (1\le i\le D),
\end{align*}
where $\text{ht} (\beta_i)$ denotes the height of the positive root $\beta_i$.
For $\alpha\in\Phi^+$, let us denote 
\begin{align*}
f_{\alpha} \colon U^-B/B \rightarrow \C
\quad ; \quad 
uB\ (u\in U^-) \mapsto (\text{Ad}_{u^{-1}}\nil)_{-\alpha}.
\end{align*}
With the grading given above, the functions $f_{\alpha}$ for $\alpha\in\Phi^+$ are homogeneous polynomials of positive degrees.
In fact, for $u=\exp(t_1F_{\beta_1}) \cdots \exp(t_D F_{\beta_D})\in U^-$, we have
\begin{align*}
(\text{Ad}_{u^{-1}}\nil)_{-\alpha}
= \Big(\exp(\text{ad}(-t_D F_{\beta_D}))\cdots\exp(\text{ad}(-t_1 F_{\beta_1}))\nil\Big)_{-\alpha},
\end{align*}
and one can verify that this is a homogeneous polynomial in $\C[t_1,\ldots,t_D]$ (for the grading given above) by expanding the exponentials above (cf.\ the argument in the proof of \cite[Lemma~3.5]{ab-fu-ze}).

To prove the claim of this theorem, we first show that the sequence $f_{\beta_1},\ldots,f_{\beta_D}$ is a regular sequence in the coordinate ring $\C[U^-B/B]$.
For that purpose, observe that the zero locus
\begin{align*}
Z(f_{\beta_1},\ldots,f_{\beta_D})=
\{ uB \in U^-B/B \mid u\in U^-, \ (\text{Ad}_{u^{-1}}\nil)_{-\alpha}=0\ (\alpha\in\Phi^+)\}
\end{align*} 
is non-empty (since $eB$ belongs to this set) and that it 
is an open affine neighbourhood of the regular nilpotent Hessenberg variety associated with the smallest Hessenberg space $\mathfrak{b}$. 
This Hessenberg variety is $0$-dimensional (\cite[Corollary~3]{pre18} so that we obtain
\begin{align*}
\text{Krull dim } \C[U^-B/B]/(f_{\beta_1},\ldots,f_{\beta_D}) = 0. 
\end{align*} 
Since $f_{\beta_1},\ldots,f_{\beta_D}$ are homogeneous polynomials of positive degrees, 
this implies that the sequence $f_{\beta_1},\ldots,f_{\beta_D}$ is a regular sequence in $\C[U^-B/B]$ by \cite[the proof of Proposition~5.1]{fu-ha-ma}.

We next show that the sequence $q'_1$, \ldots, $q'_{\rkg}$ is a regular sequence in the reduced coordinate ring $\C[Y\cap U^-B/B]$.
For that purpose, we take the enumeration \eqref{eq:enumeration} so that the last $\rkg$ sequence in \eqref{eq:enumeration} is the sequence of the simple roots $\alpha_1,\ldots,\alpha_{\rkg}$.
Then it follows from \cite[Corollary~3.7]{ab-fu-ze} that the reduced coordinate ring $\C[Y\cap U^-B/B]$ is given by the quotient
\begin{align}\label{eq: app2 30}
 \C[Y\cap U^-B/B] = \C[U^-B/B]/(f_{\beta_1},\ldots,f_{\beta_{D-\rkg}}),
\end{align}
where we do not list the last $\rkg$ sequence $f_{\beta_{D+1-\rkg}}(=f_{\alpha_1}),\ldots,f_{\beta_{D}}(=f_{\alpha_{\rkg}})$ in the ideal.
Let us denote by $\overline{f_{\alpha_i}}$ the image of $f_{\alpha_i}$ in $\C[Y\cap U^-B/B]$.
Then we have $q'_i=-\overline{f_{\alpha_i}}$ for $1\le i\le \rkg$.
Since we know that the sequence $f_{\beta_1},\ldots,f_{\beta_D}$ is a regular sequence in $\C[U^-B/B]$, it follows from the definition of regular sequences that the last $\rkg$ sequence $\overline{f_{\alpha_1}},\ldots,\overline{f_{\alpha_{\rkg}}}$ is a regular sequence in $\C[Y\cap U^-B/B]$ (see \eqref{eq: app2 30}). 
Since an arbitrary regular sequence in an arbitrary $\C$-algebra consists of algebraically independent elements, the claim follows.
\end{proof}

\vspace{20pt}

Lastly, we give a proof of \eqref{eq: pushing forward zero schemes} in Section~\ref{subsec: corresponding on degree 2}. For future use, we prove it in a general setting.

Let $X$ be a variety and $\mathcal{Z}$ a closed subscheme of $X$ with the underlying closed subset $Z\subseteq X$ which is equidimensional, i.e., the irreducible components $Z_1,\ldots,Z_r$ of $Z$ have the same dimension.
For example, the zero scheme associated to an effective Cartier divisor on $X$ always satisfies this condition.
Writing $d\coloneqq \dim \mathcal{Z}$, the scheme $\mathcal{Z}$ determines a cycle $\cycle{\mathcal{Z}}$ in $H_{2d}(X;\Q)$; it is a linear combination of the cycles $\cycle{Z_1},\ldots,\cycle{Z_r}$ with positive integer coefficients given by the \textit{geometric multiplicities} (see \cite[Sect.~1.5 and Sect.~19.1]{Ful98}). 
Namely, we have
\begin{align}\label{eq: schemey cycle} 
 \cycle{\mathcal{Z}} 
 = a_1\cycle{Z_1} + \cdots + a_r\cycle{Z_r}
\end{align} 
with $a_i=\ell_{\mathscr{O}_{Z_i,\mathcal{Z}}}(\mathscr{O}_{Z_i,\mathcal{Z}})\in\Z_{\ge1}$ the length of the local ring $\mathscr{O}_{Z_i,\mathcal{Z}}$
$(1\le i\le r)$.
Suppose that we have a \textit{proper} morphism $f\colon X\rightarrow Y$ to a variety $Y$, and suppose that $W$ is an irreducible closed subset of $Y$ such that $f(Z)\subseteq W$.
Under this setting, the following hold.\\

\noindent
\textbf{Lemma~C.}\!\!
\textit{
If $\dim Z=\dim W$, 
there exists a non-negative integer $\ell\in\Z_{\ge0}$ such that
\begin{align*}
 f_*(\cycle{\mathcal{Z}}) = \ell \cycle{W}
 \quad \text{in $H_{2d}(Y;\Q)$.}
\end{align*} 
}

\vspace{-10pt}

\begin{proof}
By \eqref{eq: schemey cycle}, we have
\begin{align}\label{eq: app C 10}
 f_*(\cycle{\mathcal{Z}}) = a_1 f_*(\cycle{Z_1}) + \cdots + a_r f_*(\cycle{Z_r}).
\end{align} 
For each $f_*(\cycle{Z_i})$ in the right hand side, it follows from \cite[Lemma 19.1.2]{Ful98} that there exists $\ell_i\in\Z_{>0}$ such that 
\begin{align*}
 f_*(\cycle{Z_i}) = 
 \begin{cases}
  \ell_i\cycle{f(Z_i)} \quad &\text{if $\dim f(Z_i)=\dim Z_i(=d)$}, \\
  0 &\text{otherwise}.
 \end{cases}
\end{align*}  
Since $f$ is proper, $f(Z_i)$ is an irreducible closed subset of $W$, and we have $\dim W=d$ by the assumption.
Hence, if $\dim f(Z_i)=d$, then we have $f(Z_i)=W$.
Thus, the previous equality can be expressed as
\begin{align*}
 f_*(\cycle{Z_i}) = 
 \begin{cases}
  \ell_i\cycle{W} \quad &\text{if $\dim f(Z_i)=\dim Z_i(=d)$}, \\
  0 &\text{otherwise}.
 \end{cases}
\end{align*} 
Now we obtain the desired claim from \eqref{eq: app C 10} since we have $a_i=\ell_{\mathscr{O}_{Z_i,\mathcal{Z}}}(\mathscr{O}_{Z_i,\mathcal{Z}})\ge1$ for $1\le i\le r$.
\end{proof}

%%%%%%%%%%%%%%%%%%%
%%%%%%%%%%%%%%%%%%%


\begin{thebibliography}{10}
%%%%%%%%%%%%%%%%%%%
%%%%%%%%%%%%%%%%%%%
\bibitem{ab-fu-ze}
	H. Abe, N. Fujita, and H. Zeng,
	\emph{Geometry of regular Hessenberg varieties}, 
	Transform.\ Groups {\bf 25} (2020), no.\ 2, 305--333.
\bibitem{AHMMS}
	T. Abe, T. Horiguchi, M. Masuda, S. Murai, and T. Sato,
	\emph{Hessenberg varieties and hyperplane arrangements},
	J. Reine Angew.\ Math.\ {\bf 764} (2020), 241--286.
\bibitem{AHHM}
H. Abe, M. Harada, T. Horiguchi, and M. Masuda,
\emph{The cohomology rings of regular nilpotent Hessenberg varieties in Lie type A}, 
Int. Math. Res. Not., 
(\textbf{2019}), no.17, 5316--5388.
\bibitem{AHKZ}
	H. Abe, T. Horiguchi, H. Kuwata, H. Zeng,
	\emph{Geometry of Peterson Schubert calculus in type A and left-right diagrams}, 
	arXiv:2104.02914.
\bibitem{AZ}
	H. Abe and H. Zeng,
	\emph{The integral cohomology rings of Peterson varieties in type A}, 
	arXiv:2203.02629.
\bibitem{AZnn}
	H. Abe and H. Zeng,
	\emph{The totally nonnegative part of the Peterson variety in Lie type A}, 
	preprint in arXiv.
\bibitem{AF-Equiv}
	D. Anderson and W. Fulton,
	\emph{Equivariant Cohomology in Algebraic Geometry},
	available electronically at \texttt{https://people.math.osu.edu/anderson.2804/ecag/index.html}.
\bibitem{Ba}
	A. B$\breve{\text{a}}$libanu, 
	\emph{The Peterson Variety and the Wonderful Compactification}, 
	Represent. Theory \textbf{21} (2017), 132--150.
\bibitem{ba-cr}
	A. B$\breve{\text{a}}$libanu and P. Crooks,
	\emph{Perverse sheaves and the cohomology of regular Hessenberg varieties},
	arXiv:2004.07970.
\bibitem{Borisov-Chen-Smith}
	A. Borisov, L. Chen, G. Smith, 
	\emph{The orbifold Chow ring of toric Deligne-Mumford stacks},
	J. Amer. Math. Soc. \textbf{18} (2005), no. 1, 193--215.
\bibitem{Blume05}
	M. Blume,
	\emph{Toric orbifolds associated to Cartan matrices},
	Ann. Inst. Fourier $($Grenoble$)$, \textbf{65} (2) (2015),  863--901.
\bibitem{Cox}
	D. Cox, 
	\emph{The homogeneous coordinate ring of a toric variety},
	J. Algebraic Geom. \textbf{4} (1995), no. 1, 17--50.
\bibitem{CLS}
	D. Cox, J. Little and H. Schenck,
	\emph{Toric varieties}, 
	Graduate Studies in Mathematics, 124, American Mathematical Society, Providence RI (2011). 
\bibitem{fu-ha-ma}
	Y. Fukukawa, M. Harada, and M. Masuda, 
	\emph{The equivariant cohomology rings of Peterson varieties}, 
	J. Math. Soc. Japan \textbf{67} (2015), no. 3, 1147--1159. 
\bibitem{Ful98}
	W. Fulton, 
	\emph{Intersection theory}, 
	Second edition. Results in Mathematics and Related Areas. 3rd Series. A Series of Modern Surveys in Mathematics, 2. Springer-Verlag, Berlin, 1998.
\bibitem{GoGo}
	R. Goldin and B. Gorbutt
	\emph{A positive formula for type A Peterson Schubert calculus}
	Matematica \textbf{1}(2022), no.3, 618--665.
\bibitem{GoMiSi}
	R. Goldin, L. Mihalcea, R. Singh,
	\emph{Positivity of Peterson Schubert Calculus}
	arXiv:2106.10372.
\bibitem{GoSi}
	R. Goldin and R. Singh,
	\emph{Equivariant Chevalley, Giambelli, and Monk Formulae for the Peterson Variety},
	arXiv:2111.15663.
\bibitem{ha-ho-ma}
	M. Harada, T. Horiguchi, and M. Masuda, \emph{The equivariant cohomology rings of Peterson varieties in all Lie types}, 
	Canad. Math. Bull. \textbf{58} (2015), no. 1, 80--90. 
\bibitem{ha-ty11}
	M. Harada and J. Tymoczko, \emph{A positive Monk formula in the $S^1$-equivariant cohomology of type $A$ Peterson varieties}, 
	Proc. Lond. Math. Soc. (3) \textbf{103} (2011), no. 1, 40--72.
\bibitem{ha-ty17}
	M. Harada and J. Tymoczko,
	\emph{Poset pinball, GKM-compatible subspaces, and Hessenberg varieties},
	J. Math. Soc. Japan \textbf{69} (2017), no. 3, 945--994.
\bibitem{Hartshorne}
	R. Hartshorne,
	\emph{Algebraic geometry},
	Graduate Texts in Mathematics, No. 52. Springer-Verlag, New York-Heidelberg, 1977.
\bibitem{ho23}
	T. Horiguchi,
	\emph{Mixed Eulerian Numbers and Peterson Schubert Calculus},
	Int. Math. Res. Not., https://doi.org/10.1093/imrn/rnad030.
\bibitem{ho-ma-sh-so}
	T. Horiguchi, M. Masuda, J. Shareshian, J. Song, 
	\emph{Toric orbifolds associated with partitioned weight polytopes in classical types}, 
	arXiv:2105.05453.
\bibitem{Humphreys08}
	J. Humphreys,
	\emph{Representations of semisimple Lie algebras in the BGG category $\mathcal{O}$},
	Graduate Studies in Mathematics, 94. American Mathematical Society, Providence, RI, 2008.
\bibitem{Humphreys75}
	J. Humphreys, 
	\emph{Linear algebraic groups},
	Graduate Texts in Mathematics, No. 21. Springer-Verlag, New York-Heidelberg, 1975. 
\bibitem{Humphreys78}
	J. Humphreys, 
	\emph{Introduction to Lie algebras and representation theory},
	Second printing, revised. Graduate Texts in Mathematics, 9. Springer-Verlag, New York-Berlin, 1978.
\bibitem{In-ty16}
	E. Insko and J. Tymoczko,
	\emph{Intersection theory of the Peterson variety and certain
singularities of Schubert varieties}, 
	Geom. Dedicata \textbf{180} (2016), 95--116.
\bibitem{Klyachko}
   A. Klyachko,
   \emph{Orbits of a maximal torus on a flag space}, 
   Functional Anal. Appl. 19 (1985),  no. 2, 65-66. 
\bibitem{Kostant}
	B. Kostant, 
	\emph{Flag manifold quantum cohomology, the Toda lattice, and the representation with highest weight $\rho$},
	Selecta Math. (N.S.) \textbf{2} (1996), no. 1, 43--91.
\bibitem{lu-ti92}
	G. Lusztig and J. Tits,
	\emph{The inverse of a Cartan matrix},
	An. Univ. Timi\c{s}oara Ser. \c{S}tiin\c{t}. Mat.
	\textbf{30} (1992), 17--23.
\bibitem{Milnor-Stasheff}
	J. Milnor and D. Stasheff, 
	\emph{Characteristic classes}, 
	Annals of Mathematics Studies, No. 76. Princeton University Press, Princeton, N. J.; University of Tokyo Press, Tokyo, 1974.
\bibitem{mi-to}
M. Mimura and H. Toda, 
   \emph{Topology of Lie Groups. I, I\hspace{-.1em}I. Translated from
     the 1978 Japanese edition by the authors. Translations of
     Mathematical Monographs, 91.} American Mathematical Society, Providence, RI, 1991. 
\bibitem{Peterson}
D. Peterson, 
\emph{Quantum cohomology of $G/P$},
Lecture course, M.I.T., Spring term, 1997. 
\bibitem{pre13}
	M. Precup, 
	\emph{Affine pavings of Hessenberg varieties for semisimple groups}, 
	Selecta Math. (N.S.)  \textbf{19} (2013), no. 4, 903--922.
\bibitem{pre18}
	M. Precup, 
	\emph{The Betti numbers of regular Hessenberg varieties are palindromic}, 
	Transform. Groups \textbf{23} (2018), no. 2, 491--499.
\bibitem{Rietsch06}
	K. Rietsch, 
	\textit{A mirror construction for the totally nonnegative part of the Peterson variety},
	Nagoya Math. J. \textbf{183} (2006), 105--142.
\bibitem{Rietsch08}
	K. Rietsch, 
	\emph{A mirror symmetric construction of $qH^*_T(G/P)_{(q)}$},
	Adv. Math. \textbf{217}(2008), no.6, 2401--2442.
\bibitem{Shafarevich}
	I.~R. Shafarevich,
	Basic algebraic geometry. 1.
	Varieties in projective space. Second edition. Translated from the 1988 Russian edition and with notes by Miles Reid. Springer-Verlag, Berlin, 1994.
\end{thebibliography}
\end{document}